\definecolor{citecol}{rgb}{0.07,0.07,0.05}
\definecolor{urlcol}{rgb}{0.06,0.04,0.09}
\definecolor{linkcol}{rgb}{0.01,0.03,0.08}
 \numberwithin{equation}{subsection}
\theoremstyle{plain}
\newtheorem{theorem}{Theorem}[section]
\newtheorem{lemma}[theorem]{Lemma}
\newtheorem{proposition}[theorem]{Proposition}
\newtheorem{conjecture}[theorem]{Conjecture}
\theoremstyle{definition}
\newtheorem{definition}[theorem]{Definition}
\newtheorem{remark}[theorem]{Remark}
\newtheorem*{acknowledgement}{Acknowledgement}
\theoremstyle{remark}
\newcommand{\BC}{{\mathbb C}}
\newcommand{\BF}{{\mathbb F}}
\newcommand{\BL}{{\mathbb L}}
\newcommand{\BP}{{\mathbb P}}
\newcommand{\BQ}{{\mathbb Q}}
\newcommand{\BV}{{\mathbb V}}
\newcommand{\BX}{{\mathbb X}}
\newcommand{\BY}{{\mathbb Y}}
\newcommand{\BZ}{{\mathbb Z}}
\newcommand{\CA}{{\mathcal A}}
\newcommand{\CB}{{\mathcal B}}
\newcommand{\CF}{{\mathcal F}}
\newcommand{\CG}{{\mathcal G}}
\newcommand{\CK}{{\mathcal K}}
\newcommand{\CN}{{\mathcal N}}
\newcommand{\CR}{{\mathcal R}}
\newcommand{\CS}{{\mathcal S}}
\newcommand{\CY}{{\mathcal Y}}
\newcommand{\CZ}{{\mathcal Z}}
\newcommand{\FA}{{\mathfrak A}}
\newcommand{\FB}{{\mathfrak B}}
\newcommand{\FC}{{\mathfrak C}}
\newcommand{\FD}{{\mathfrak D}}
\newcommand{\FM}{{\mathfrak M}}
\newcommand{\FX}{{\mathfrak X}}
\newcommand{\Fa}{{\mathfrak a}}
\newcommand{\Fb}{{\mathfrak b}}
\DeclareMathOperator{\Lie}{Lie}
\DeclareMathOperator{\Charpol}{Charpol}
\DeclareMathOperator{\End}{End}
\DeclareMathOperator{\Spf}{Spf}
\DeclareMathOperator{\Ker}{Ker}
\DeclareMathOperator{\Hom}{Hom}
\DeclareMathOperator{\Nm}{Nm}
\DeclareMathOperator{\val}{val}
\DeclareMathOperator{\Tr}{Tr}
\DeclareMathOperator{\Mod}{mod}
\DeclareMathOperator{\diag}{diag}
\author{Sungyoon Cho}
\address[Sungyoon Cho]{Department of Mathematics, University of Arizona}
\email{sungyooncho@math.arizona.edu}
\title[Representation densities]{On local representation densities of hermitian forms and special cycles}
\date{\today}
\begin{document}

\begin{abstract}
In this paper, we reformulate conjectural formulas for the arithmetic intersection numbers of special cycles on unitary Shimura varieties with minuscule parahoric level structure in terms of weighted counting of lattices containing special homomorphisms.
\end{abstract}

\maketitle
\tableofcontents{}

\section{Introduction}
In \cite{KR2} and \cite{KR3}, Kudla and Rapoport made a conjectural formula, the so-called Kudla-Rapoport conjecture, on the arithmetic intersection numbers of special cycles on unitary Shimura varieties with hyperspecial level structure. This conjecture was proved by Li and Zhang in \cite{LZ}. One of key ingredients in the proof of the Kudla-Rapoport conjecture was a reformulation of the derivative of a representation density in terms of weighted counting of lattices containing special homomorphisms. This is the work of \cite{CY} in the case of orthogonal group and its unitary variant was used to prove the conjecture in \cite{LZ}. Also, a variant of this formula in ramified field extension case was used in \cite{LL} to prove an analogue of the Kudla-Rapoport conjecture for exotic good reduction case.

In \cite{Cho2}, we made a conjectural formula for the arithmetic intersection number of special cycles on unitary Shimura varieties with minuscule parahoric structure. This was formulated in terms of certain weighted representation densities.

In this paper, we reformulate this formula in terms of weighted counting of lattices. Now, let us describe our result in more detail.

Let $p$ be an odd prime and let $F$ be an unramified extension of $\BQ_p$ with ring of integers $O_F$ and residue field $\BF_q$. Let $\pi$ be a uniformizer (this is $p$, but we use the general notation for future use). Let $E$ be a quadratic unramified extension of $F$ with ring of integers $O_E$. Let $\breve{E}$ be the completion of a maximal unramified extension of $E$.
For integers $0 \leq h \leq n$, we can consider the unitary Rapoport-Zink space $\CN^h_{E/F}(1,n-1)$ over $\Spf O_E$. This uniformizes the basic locus of the Rapoport-Smithling-Zhang unitary Shimura varieties (\cite{RSZ2}) with minuscule parahoric level structure. This is a regular formal scheme and hence we can do intersection theory on this scheme.

There is a hermitian space $\BV$ over $E$ with hermitian form $h(\cdot,\cdot)$ which is called the space of special homomorphisms. For each special homomorphism $x$ in $\BV$, one can attach Cartier divisors $\CZ(x)$ and $\CY(y)$ in $\CN^h(1,n-1)$ that are called the Kudla-Rapoport divisors or special cycles. We are interested in the arithmetic intersection numbers of these special cycles on $\CN^h(1,n-1)$.

By \cite[Proposition 5.11]{Cho} and \cite[Remark 2.7]{Cho2}, for the arithmetic intersection numbers of special cycles in $\CN^h(1,n-1)$, it suffices to consider the ones in $\CN^n(1,2n-1)$. In \cite[Conjecture 3.17, 3.26]{Cho2}, we made a conjectural formula for these intersection numbers in terms of weighted representation densities as follows.

\begin{conjecture}\label{conjecture1.1}(\cite[Conjecture 3.17, 3.26]{Cho2})\\
 For a basis $\lbrace x_1, \dots, x_{2n-m}, y_1, \dots, y_m \rbrace$ of $\BV$, and special cycles $\CZ(x_1)$, $\dots$, $\CZ(x_{2n-m})$, and $\CY(y_1)$, $\dots$, $\CY(y_m)$ in $\CN^n(1,2n-1)$, we have
		\begin{equation*}
			\begin{array}{l}
				\langle \CZ(x_1),\dots,\CZ(x_{2n-m}),\CY(y_1),\dots,\CY(y_m) \rangle\\:=\chi(O_{\CZ(x_1)}\otimes^{\BL}\dots\otimes^{\BL}O_{\CY(y_k)})\\
				=\dfrac{1}{W_{n,n}(A_n,0)}\lbrace W'_{m,n}(B,0)-\mathlarger{\sum}_{0 \leq i \leq n-1} \beta_i^mW_{m,i}(B,0)\rbrace.
			\end{array}
		\end{equation*}
		Here $\chi$ is the Euler-Poincare characteristic and $\otimes^{\BL}$ is the derived tensor product. Also, $B$ is the matrix
		\begin{equation*}
			B=\left(\begin{array}{cc} 
				h(x_i,x_j) & h(x_i,y_l)\\
				h(y_k,x_j)& h(y_k,y_l)
			\end{array} 
			\right)_{1\leq i,j \leq 2n-m, 1\leq k,l \leq m}.
		\end{equation*}
\end{conjecture}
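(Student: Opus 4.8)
The claimed equality belongs to the Kudla--Rapoport--Li--Zhang circle of ideas, and the strategy I would follow is the one that proved the original hyperspecial conjecture in \cite{LZ}: realize \emph{both} sides of the asserted identity as weighted sums over $O_E$-lattices in $\BV$ that contain the special lattice $L_0 := \sum_i O_E x_i + \sum_j O_E y_j$, and then match the two expansions lattice-by-lattice along the Bruhat--Tits stratification of the reduced special fiber of $\CN^n(1,2n-1)$. The new input is the minuscule parahoric level, which imposes a ``type'' stratification at the chosen vertex; this is precisely what the combinatorial weights $\beta_i^m$ and the subtracted densities $W_{m,i}(B,0)$ in Conjecture~\ref{conjecture1.1} are designed to record. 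I would split the argument into an analytic reformulation (Stage~I), which is the reformulation this paper is built around, and a geometric comparison (Stage~II).

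\textbf{Stage I (analytic side).} First I would unfold the weighted densities $W_{m,i}(B,X)$ from \cite{Cho2}, following the classical treatment of local representation densities, rewriting each as a \emph{finite} sum $\sum_{L_0\subseteq L} c_i(L;X)$ indexed by $O_E$-lattices $L\supseteq L_0$ of bounded covolume --- finiteness coming from nondegeneracy of $B$ --- where $c_i(L;X)$ is an explicit polynomial in $X$ built from a primitive local density of the torsion module $L/L_0$ twisted by the type of $L$ at the fixed vertex; it is precisely this last bookkeeping that produces the $\beta_i^m$. Differentiating the $i=n$ term at $X=0$ then reduces $W'_{m,n}(B,0)$ to a sum of explicit rational contributions over the integral overlattices $L\supseteq L_0$, exactly as in the Cho--Yamauchi computation \cite{CY} in the hyperspecial case. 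The remaining task in this stage is to prove, by an inclusion--exclusion over the stratification index $i$, that $\sum_{0\le i\le n-1}\beta_i^m W_{m,i}(B,0)$ cancels \emph{precisely} the contributions of the lattices $L$ that violate the relevant vertex-type condition, so that after dividing by the normalizing factor $W_{n,n}(A_n,0)$ (the weighted density of the ambient self-dual lattice) one is left with a manifestly non-negative weighted lattice count $\sum_{L_0\subseteq L}\mu_B(L)\,\mathfrak{n}(L)$, with $\mathfrak{n}(L)$ a length and $\mu_B(L)$ a $q$-power weight attached to the type of $L$.

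\textbf{Stage II (geometric side).} On the geometric side I would decompose $\langle\CZ(x_1),\dots,\CZ(x_{2n-m}),\CY(y_1),\dots,\CY(y_m)\rangle$ along the Bruhat--Tits stratification of the reduced locus of $\CN^n(1,2n-1)$, whose closed strata are indexed by exactly the vertex lattices appearing in Stage~I, compute the contribution of each stratum as an intersection length on it --- using that the relevant $\CZ$- and $\CY$-divisors restrict to Cartier divisors there, together with the derived-tensor/excess-intersection formalism already available for the $\CN^h$'s and the reduction \cite[Proposition 5.11]{Cho} to $\CN^n$ --- and then check term-by-term that these lengths, with their multiplicities, reproduce the $\mathfrak{n}(L)$ and $\mu_B(L)$ of Stage~I. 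Summing and renormalizing by $W_{n,n}(A_n,0)$ gives the conjectural identity.

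\textbf{Main obstacle.} I expect the crux to be the telescoping in Stage~I. In the hyperspecial case the lattice count is ``clean,'' whereas the minuscule parahoric introduces the vertex-type filtration, and one must show that the weights $\beta_i^m$ of \cite{Cho2} are \emph{exactly} the Euler characteristics that make the inclusion--exclusion close up --- equivalently, that the subtraction in Conjecture~\ref{conjecture1.1} deletes all and only the terms with no geometric counterpart. Once that combinatorial identity is isolated and proved, the geometric comparison runs along the \cite{LZ} template, with the parahoric (and any residual ramification) subtleties treated as in \cite{LL}; making this identity transparent is exactly the purpose of the lattice-counting reformulation pursued here.
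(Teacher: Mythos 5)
The statement you are asked about is a \emph{conjecture} (quoted from \cite[Conjecture 3.17, 3.26]{Cho2}), and the paper contains no proof of it: its entire contribution is to \emph{reformulate} the right-hand side, and only in the case $m=0$, as a weighted count of lattices (Sections \ref{section3}--\ref{section4}). Your proposal is therefore not comparable to a proof in the paper, and more importantly it is not itself a proof: both of its stages are programs whose decisive steps are left unverified. Stage~II --- identifying the arithmetic intersection number $\langle \CZ(x_1),\dots,\CY(y_m)\rangle$ with the analytic side via the Bruhat--Tits stratification of $\CN^n(1,2n-1)$ --- is exactly the open content of the conjecture; no such stratum-by-stratum length computation is available at parahoric level (the paper's Remark following Conjecture \ref{conjecture4.9} only checks a very partial consistency for $n=1$ against Sankaran's results, and even that is not $GL_2(O_E)$-invariant). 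Asserting that the comparison ``runs along the \cite{LZ} template'' does not close this gap: the local geometry at a non-hyperspecial vertex (non-smooth strata, the interplay of $\CZ$- and $\CY$-divisors, the difference divisors needed for an induction as in \cite{LZ}) is precisely what is not understood, which is why the statement remains a conjecture.

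Stage~I also does not match what can actually be done, nor what the paper does. The paper's reformulation is not obtained by unfolding $W_{m,i}(B,X)$ as a sum over overlattices $L\supseteq L_0$ with an inclusion--exclusion in $i$; for $m>0$ the densities $W_{m,i}$ are \emph{not} $GL_{2n}(O_E)$-invariant, so they cannot be expressed through counts of $O_E$-lattices at all --- the paper states explicitly that parahoric-invariant objects are needed and postpones $m>0$. For $m=0$ the paper instead proves a linear-independence statement for the functions $\CF_0(\cdot,A_\lambda)$ on $\CR_n^{0k}$ (Lemma \ref{lemma3.2}), uses it to expand $\CF_0'(Y,A_n^{[0]})$ uniquely in the $\CF_0(Y,A_\lambda)$, and computes the coefficients by explicit Vandermonde-type matrix inversions (Lemmas \ref{lemma4.3}--\ref{lemma4.5}) together with the known hyperspecial coefficients of \cite{LZ} and the functional equation (Propositions \ref{proposition3.3}, \ref{proposition3.4}); the constants $\beta_i^m$ are determined by the matrix relation of \cite[Theorem 3.16]{Cho2} (Proposition \ref{proposition2.7}), not by a telescoping cancellation that you would still have to formulate and prove. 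So your ``main obstacle'' paragraph correctly locates a difficulty, but neither it nor the geometric comparison is resolved, and the proposal as written does not establish the conjecture.
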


In the present paper, we only consider $m=0$ case, i.e., the arithmetic intersection number of only $\CZ$-cycles. It is because, in this case, the above weighted representation densities are usual representation densities and hence they depend only on the $O_E$-lattice generated by $\lbrace x_1, \dots, x_{2n}\rbrace$. For general $h$, we need to modify these lattices in some way and hence this will be postponed to our future work.

Let $A$ be a $2n\times 2n$ hermitian matrix and let $B$ be the hermitian matrix of special homomorphisms $x_1, \dots, x_{2n}$. Let $L_{x}$ be the $O_E$-lattice generated by $x_1, \dots, x_{2n}$. Then one can regard the representation density $\alpha(A,B)/\alpha(A,A)$ as counting of different lattices $L'$ with hermitian forms $A$ containing $L_{x}$ (see Section \ref{subsection3.2}). Therefore, we want to express \begin{equation*}\dfrac{1}{W_{n,n}(A_n,0)}\lbrace W'_{0,n}(B,0)-\mathlarger{\sum}_{0 \leq i \leq n-1} \beta_i^0W_{0,i}(B,0)\rbrace \end{equation*}
as a linear sum $\mathlarger{\sum}_{A} (\text{constant}) \alpha(A,B)/\alpha(A,A)$. Let $\CR_{2n}^{0+}$ be the set
\begin{equation*}
	\CR^{0+}_{2n}:=\lbrace A_{\lambda} \vert \lambda=(\lambda_1,\dots, \lambda_{2n}), \lambda_i \in \BZ, \lambda_1 \geq \dots \geq \lambda_{2n} \geq 0 \rbrace,
\end{equation*}
and let $A_{\lambda}=\diag(\pi^{\lambda_1},\dots,\pi^{\lambda_{2n}})$. Note that these $A_{\lambda}$'s form a complete set of representatives of $GL_{2n}(O_E)$-equivalent classes of nondegenerate hermitian matrices. In Section \ref{subsection3.1}, we prove some linear independence and this implies that there is a unique way to express the above formula as a linear sum of representation densities
\begin{equation*}
	\mathlarger{\sum}_{\lambda \in \CR_{2n}^{0+}} (\text{constants}) \dfrac{\alpha(A_{\lambda},B)}{\alpha(A_{\lambda},A_{\lambda})}.
\end{equation*}

In Section \ref{section4}, we compute these constants and our theorem can be formulated as follows.

\begin{theorem}(Theorem \ref{theorem4.7}) We have
	\begin{equation*}\begin{array}{l}\dfrac{1}{W_{n,n}(A_n,0)}\lbrace W'_{0,n}(B,0)-\mathlarger{\sum}_{0 \leq i \leq n-1} \beta_i^0W_{0,i}(B,0)\rbrace\\
	=\mathlarger{\sum}_{\lambda \in \CR_{2n}^{0+}} D_{\lambda} \dfrac{\alpha(A_{\lambda},B)}{\alpha(A_{\lambda},A_{\lambda})}-\mathlarger{\sum}_{0 \leq i \leq n-1} \Fb_i^0\dfrac{\alpha(A_{(1^i,0^{2n-i})},B)}{\alpha(A_{(1^i,0^{2n-i})},A_{(1^i,0^{2n-i})})} \\\\
	
	=\mathlarger{\sum}_{\lambda \in \CR_{2n}^{0+}}\mathlarger{\sum}_{L' \in A_{\lambda}} D_{\lambda}1_{L'}(x_1,\dots,x_{2n})
	-\mathlarger{\sum}_{0 \leq i \leq n-1} \mathlarger{\sum}_{L' \in A_{(1^i,0^{2n-i})}} \Fb_i^0 1_{L'}(x_1, \dots, x_{2n}).	
	\end{array} \end{equation*}
\end{theorem}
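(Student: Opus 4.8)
The plan is to prove the first displayed equality; the second one is then immediate. Indeed, by Section \ref{subsection3.2} we have $\alpha(A_\lambda,B)/\alpha(A_\lambda,A_\lambda)=\sum_{L'\in A_\lambda}1_{L'}(x_1,\dots,x_{2n})$ for every $\lambda\in\CR_{2n}^{0+}$, so the last line of the theorem is obtained from the middle one by termwise substitution (and likewise $A_{(1^i,0^{2n-i})}$ may be read as the set of overlattices $L'\supseteq L_x$ in that isometry class). Moreover, the linear independence established in Section \ref{subsection3.1} shows that the left-hand side has a \emph{unique} expression of the form $\sum_{\lambda\in\CR_{2n}^{0+}}c_\lambda\,\alpha(A_\lambda,B)/\alpha(A_\lambda,A_\lambda)$ with the $c_\lambda$ independent of $B$. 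So the whole task is to compute these constants and to verify that $c_\lambda=D_\lambda$ off the near-self-dual strata $\lambda=(1^i,0^{2n-i})$ and $c_\lambda=D_\lambda-\Fb_i^0$ on them.

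The first step is to make the densities $W_{0,j}(B,\cdot)$ explicit. As noted in the introduction, for $m=0$ they are ordinary unitary representation densities of the hermitian lattice $L_x$, and the key structural input, recalled from \cite{Cho2}, is a Cho--Yamauchi-type formula expressing such a density as a finite sum over $O_E$-lattices $M$ with $L_x\subseteq M\subseteq M^\vee$, in which each $M$ contributes a term depending only on $j$, on the length of $M/L_x$, and on the type of the hermitian module $M^\vee/M$. What makes this usable is that it expands $W_{0,n}(B,\cdot)$, the derivative $W'_{0,n}(B,0)$, and each value $W_{0,i}(B,0)$ in the \emph{same} family of honest representation densities. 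Concretely, grouping the lattices $M$ by their isometry class, which equals $A_{\lambda(M)}$ for a unique $\lambda(M)\in\CR_{2n}^{0+}$ — on each such class the length and type invariants above are constant — and applying the dictionary of Section \ref{subsection3.2} once more, one obtains universal constants $Q_\lambda$ and $R_{i,\lambda}$ ($\lambda\in\CR_{2n}^{0+}$, $0\le i\le n-1$), independent of $B$, such that
\begin{equation*}
W'_{0,n}(B,0)=\sum_{\lambda\in\CR_{2n}^{0+}}Q_\lambda\,\frac{\alpha(A_\lambda,B)}{\alpha(A_\lambda,A_\lambda)},\qquad W_{0,i}(B,0)=\sum_{\lambda\in\CR_{2n}^{0+}}R_{i,\lambda}\,\frac{\alpha(A_\lambda,B)}{\alpha(A_\lambda,A_\lambda)}.
\end{equation*}

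Substituting into the left-hand side and comparing with the unique expansion above gives
\begin{equation*}
c_\lambda=\frac{1}{W_{n,n}(A_n,0)}\Bigl(Q_\lambda-\sum_{0\le i\le n-1}\beta_i^0\,R_{i,\lambda}\Bigr),
\end{equation*}
and it remains to evaluate this. Using the explicit shape of the terms in the Cho--Yamauchi formula one computes $W_{n,n}(A_n,0)^{-1}Q_\lambda$ in closed product form; this closed form is what we call $D_\lambda$. One then shows, using the way the coefficients $\beta_i^0$ were chosen in \cite{Cho2} (precisely so that $W'_{0,n}(B,0)-\sum_i\beta_i^0 W_{0,i}(B,0)$ extracts the ``primitive'' part of the density derivative), that the correction $\sum_i\beta_i^0 R_{i,\lambda}$ vanishes for every $\lambda$ except the strata $\lambda=(1^i,0^{2n-i})$ with $0\le i\le n-1$, on which it equals $W_{n,n}(A_n,0)\,\Fb_i^0$. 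Hence $c_\lambda=D_\lambda$ off these strata and $c_\lambda=D_\lambda-\Fb_i^0$ on them, which is the first equality; the second follows as explained above.

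The main obstacle is this last step — the exact evaluation of $Q_\lambda$ and $R_{i,\lambda}$. This is a delicate $q$-combinatorial computation: for two diagonal model lattices $A_\lambda$ and $A_\mu$ one needs the density $\alpha(A_\lambda,A_\mu)$ exactly — equivalently, the number of sublattices of each hermitian type inside $A_\lambda$ — and then one must differentiate the resulting density and specialize as in the definition of $W'_{0,n}(B,0)$, carefully tracking the unitary sign $-q$ and the normalization $W_{n,n}(A_n,0)$. The genuinely subtle point is to verify that the $\beta_i^0$-correction annihilates every stratum except the near-self-dual ones $(1^i,0^{2n-i})$; establishing this cancellation identically in $q$ and $n$, rather than merely checking it numerically, is where the real work of Section \ref{section4} lies.
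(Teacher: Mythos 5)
Your overall plan (expand both $W'_{0,n}(B,0)$ and the correction terms in the basis $\alpha(A_\lambda,B)/\alpha(A_\lambda,A_\lambda)$, use linear independence to pin down the coefficients, then match them with $D_\lambda$ and $\Fb_i^0$) is the right shape, and your reduction of the third line to the second via Section \ref{subsection3.2} is fine. But the proposal has a genuine gap at its core: the ``key structural input'' you invoke --- a Cho--Yamauchi-type formula from \cite{Cho2} expressing $W_{0,n}(B,\cdot)$, its derivative, and the $W_{0,i}(B,0)$ as lattice sums with coefficients depending only on coarse invariants --- does not exist in \cite{Cho2}; producing exactly such an expansion (equivalently, the constants $Q_\lambda$ and hence $D_\lambda$) is the content of Section \ref{subsection4.1} and is the theorem itself. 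The paper gets it not from a lattice-counting identity but on the Iwahori-orbit side: by Lemma \ref{lemma2.2} and Proposition \ref{proposition3.1} everything is a sum of $\CG(Y,B)\CF_0(Y,\cdot)/\alpha(Y;\Gamma_{2n})$, and one expands $\CF_0'(Y,A_n^{[0]})-(-q)^{-2n^2}\CF_0'(Y,A_0^{[0]})$, stratum by stratum on the sets $\FD_k$, in terms of the $\CF_0(Y,A_{\lambda_i^+})$ by solving the Vandermonde-type systems of Lemmas \ref{lemma4.3}--\ref{lemma4.5} (the constants $d_{il}$, $\CK_l$), feeding in the hyperspecial coefficients $C_{\overline\lambda}$ in rank $2n-l$ from Propositions \ref{proposition3.3}--\ref{proposition3.4} and the density values of Lemma \ref{lemma4.6}. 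Your proposal assumes the expansion exists and then simply \emph{names} its closed form $D_\lambda$, deferring the entire computation; since the statement's content is precisely the explicit $D_\lambda$ of Theorem \ref{theorem4.7}, this is not yet a proof.

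You also mislocate the difficulty in the correction terms. There is no cancellation to establish and the choice of the $\beta_i^0$ plays no role in the support of your $R_{i,\lambda}$: since $h=0$, the test function defining $W_{0,i}$ is the characteristic function of $(L_i^\vee)^{2n}$, so $W_{0,i}(B,0)=(-q)^{-4in}\alpha(A_{(1^i,0^{2n-i})},B)$ is a \emph{single} ordinary density; its expansion is tautologically supported on the one stratum $(1^i,0^{2n-i})$, and the constants $\beta_i^0$ enter only through the numerical value of $\Fb_i^0$ (Section \ref{subsection4.2}). So the ``genuinely subtle point'' you flag is actually immediate, while the genuinely hard point --- the explicit evaluation of the coefficients of $W'_{0,n}(B,0)/W_{n,n}(A_n,0)$ --- is exactly what your sketch leaves undone.
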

We refer to Theorem \ref{theorem4.7} for $D_{\lambda}$ and Section \ref{subsection4.2} for $\Fb_i^0$. We emphasize that in the case of hyperspecial level structure $\CN^0(1,n-1)$ the arithmetic intersection number is
\begin{equation*}
	\langle \CZ(x_1),\dots,\CZ(x_n)\rangle=\dfrac{\alpha'(1_n,B)}{\alpha(1_n,1_n)}=\mathlarger{\sum}_{\lambda \in \CR_{n}^{0+}} C_{\lambda} \dfrac{\alpha(A_{\lambda},B)}{\alpha(A_{\lambda},A_{\lambda})},
\end{equation*}
and the constants $C_{\lambda}$ depend only on $n$, the parity of $\sum_i \lambda_i$, and the number of zeros in $\lambda$. In minuscule parahoric level structure cases, the constants $D_{\lambda}$ depend only on $n$, the parity of $\sum_i \lambda_i$, the number of zeros in $\lambda$, and the number of 1's in $\lambda$.

The above theorem implies that the above conjecture \ref{conjecture1.1} can be written as follows.

\begin{conjecture}(Conjecture \ref{conjecture4.9})
	For a basis $\lbrace x_1, \dots, x_{2n} \rbrace$ of $\BV$, and special cycles $\CZ(x_1)$, $\dots$, $\CZ(x_{2n})$ in $\CN^n(1,2n-1)$, we have
	\begin{equation*}\begin{array}{l}
			\langle \CZ(x_1),\dots,\CZ(x_{2n})\rangle\\
			=\mathlarger{\sum}_{\lambda \in \CR_{2n}^{0+}} D_{\lambda} \dfrac{\alpha(A_{\lambda},B)}{\alpha(A_{\lambda},A_{\lambda})}-\mathlarger{\sum}_{0 \leq i \leq n-1} \Fb_i^0\dfrac{\alpha(A_{(1^i,0^{2n-i})},B)}{\alpha(A_{(1^i,0^{2n-i})},A_{(1^i,0^{2n-i})})} \\\\
			
			=\mathlarger{\sum}_{\lambda \in \CR_{2n}^{0+}}\mathlarger{\sum}_{L' \in A_{\lambda}} D_{\lambda}1_{L'}(x_1,\dots,x_{2n})
			-\mathlarger{\sum}_{0 \leq i \leq n-1} \mathlarger{\sum}_{L' \in A_{(1^i,0^{2n-i})}} \Fb_i^0 1_{L'}(x_1, \dots, x_{2n}).
		\end{array}
	\end{equation*}
\end{conjecture}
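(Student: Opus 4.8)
The plan is to express the left-hand side, regarded as a function of the hermitian matrix $B$ (equivalently of the $O_E$-lattice $L_x=\langle x_1,\dots,x_{2n}\rangle$), in terms of the normalized local densities $\alpha(A_\lambda,B)/\alpha(A_\lambda,A_\lambda)$, and to read off the coefficients. By the linear independence proved in Section \ref{subsection3.1}, such an expansion is unique once it exists, so it suffices to produce \emph{one} expansion and then identify its coefficients with the claimed $D_\lambda$ and $\Fb_i^0$; the second displayed equality, the weighted lattice count, then follows formally from the lattice-counting interpretation of $\alpha(A_\lambda,B)/\alpha(A_\lambda,A_\lambda)$ recalled in Section \ref{subsection3.2}.

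First I would use \cite{Cho2} to rewrite the weighted densities $W_{0,i}(B,X)$ appearing on the left as \emph{ordinary} hermitian representation densities of $B$ against explicit target forms --- this is available precisely because we are in the case $m=0$, as remarked in the introduction --- so that $W'_{0,n}(B,0)$ becomes the derivative at $X=0$ of one such ordinary density, while the normalizing factor $W_{n,n}(A_n,0)$ and the correction $\sum_i\beta_i^0 W_{0,i}(B,0)$ become explicit combinations of ordinary densities. The heart of the argument is then an inversion formula expressing an ordinary density, and separately its $X$-derivative, as a linear combination of the $\alpha(A_\lambda,B)/\alpha(A_\lambda,A_\lambda)$. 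For the value at $X=0$ this is obtained by writing out the defining sum, grouping the lattices $L'\supseteq L_x$ by the $GL_{2n}(O_E)$-class $A_\lambda$ of their hermitian form, and invoking Section \ref{subsection3.2}. For the derivative I would follow the strategy of \cite{CY} and its unitary adaptation in \cite{LZ}: differentiate the corresponding generating-function identity in the density variable; the derivative of the $(\pm q^{-\bullet})$-powers produces the dependence of $D_\lambda$ on the parity of $\sum_i\lambda_i$, while the number of zeros in $\lambda$ and --- the new feature in the parahoric case --- the number of $1$'s in $\lambda$ enter through the local factors attached to those coordinates of $A_\lambda$. I expect it is cleanest to organize this as an induction on $n$ using the standard rank-lowering recursion for (weighted) representation densities, the terms $\beta_i^0 W_{0,i}(B,0)$ being designed precisely to cancel the non-regular contributions the recursion throws off; what survives is repackaged, after re-expansion in the $\alpha$-basis, as the $\Fb_i^0$-terms (see Section \ref{subsection4.2}).

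With the first equality established, the second is a formal translation. By Section \ref{subsection3.2}, for each $\lambda$ one has $\alpha(A_\lambda,B)/\alpha(A_\lambda,A_\lambda)=\sum_{L'\in A_\lambda}1_{L'}(x_1,\dots,x_{2n})$, where $L'$ ranges over the lattices in $\BV$ whose hermitian form lies in the class of $A_\lambda$ and $1_{L'}(x_1,\dots,x_{2n})$ is the indicator of the condition $x_1,\dots,x_{2n}\in L'$, i.e.\ $L_x\subseteq L'$; the same holds for the terms indexed by $(1^i,0^{2n-i})$. Substituting these identities termwise into the first equality yields the second.

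The main obstacle will be the bookkeeping in the second paragraph: carrying the derivative $W'_{0,n}(B,0)$ through the rank-lowering recursion while correctly tracking how the invariants of $\lambda$ --- its sum modulo $2$, its number of zeros, its number of $1$'s --- are generated at each step, and verifying that the $\beta_i^0$-corrections re-expand in the $\alpha$-basis into exactly the $\Fb_i^0$-terms as stated. This is a long but mechanical computation with the known density formulas; no new geometric input is needed, since both the reduction to $\CN^n(1,2n-1)$ and the lattice-counting dictionary are already in place from the earlier sections, and I anticipate no essential difficulty beyond the combinatorial accounting of the parahoric error terms.
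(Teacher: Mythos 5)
Your outer frame matches the paper's: the statement is a \emph{reformulation}, conditional on Conjecture \ref{conjecture3.17} (so one starts, as you implicitly do, from the weighted-density expression for $\langle \CZ(x_1),\dots,\CZ(x_{2n})\rangle$), the expansion in the basis $\alpha(A_\lambda,\cdot)/\alpha(A_\lambda,A_\lambda)$ is unique by the linear independence of Lemma \ref{lemma3.2}, the correction terms are converted via $W_{0,i}(B,0)=(-q)^{-4in}\alpha(A_{(1^i,0^{2n-i})},B)$ into the $\Fb_i^0$-terms of Section \ref{subsection4.2}, and the second displayed equality is indeed just the lattice-counting dictionary of Section \ref{subsection3.2}. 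Those parts of your plan are correct and are exactly what the paper does.

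The genuine gap is the central analytic step, which you defer to ``mechanical bookkeeping'': producing the expansion $W'_{0,n}(B,0)/W_{n,n}(A_n,0)=\sum_{\lambda\in\CR_{2n}^{0+}} D_\lambda\,\alpha(A_\lambda,B)/\alpha(A_\lambda,A_\lambda)$ with the explicit constants $D_\lambda$, i.e.\ Theorem \ref{theorem4.7}, without which the conjecture as stated has no content (the specific $D_\lambda$ appear in it). You propose to ``differentiate the corresponding generating-function identity'' and run a rank-lowering induction, but the Cho--Yamauchi/Li--Zhang identity you would differentiate exists in the cited literature only for the self-dual target $1_n$; no such identity is available for the parahoric target $A_n$ (equivalently, after dualizing, for $A_{(1^n,0^n)}$), and establishing its analogue is precisely the work of Section \ref{subsection4.1}: one compares the Iwahori kernels $\CF_0'(Y,A_n^{[0]})$ and $\CF_0'(Y,A_0^{[0]})$, observes that their difference depends only on $E_0(\eta)$ (the strata $\FD_k$), builds the auxiliary functions $\sum_{0\le i\le l} d_{il}\CF_0(Y,A_{\lambda_i^+})$ vanishing on $\bigcup_{k<l}\FD_k$, determines $d_{il}$ and $\CK_l$ by Vandermonde inversion (Lemmas \ref{lemma4.3}--\ref{lemma4.5}), and evaluates $\alpha(A_\lambda,A_\lambda)$ (Lemma \ref{lemma4.6}) --- importing the hyperspecial constants $C_\lambda$ of Propositions \ref{proposition3.3} and \ref{proposition3.4} only for the lower-rank pieces $\lambda^{\vee_l}$. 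Nothing in your proposal substitutes for this computation, so no candidate $D_\lambda$ is ever produced. Moreover, your description of the role of the $\beta_i^0$-terms (``designed to cancel the non-regular contributions the recursion throws off'') misstates how they enter this derivation: they are not cancelled against anything; they are carried along unchanged and rewritten wholesale as the $\Fb_i^0$-terms. (Minor: the derivatives $W'$ and $\alpha'$ are taken at $X=1$, i.e.\ $r=0$, not at $X=0$.)
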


\begin{acknowledgement}
	I would like to thank Chao Li and Yifeng Liu for helpful discussions.
\end{acknowledgement}

\section{Weighted representation densities and conjectures}\label{section2}
In this section, we will recall the definition of weighted representation densities and their formulas from \cite{Cho2} and prove some properties. Also, we will recall the conjectural formula in \cite[Conjecture 3.17, 3.26]{Cho2}.

\subsection{Weighted representation densities}\label{subsection2.1}
In this subsection, we will recall the definition of weighted representation densities and formulas from \cite[Section 3.1]{Cho2}.

We fix a prime $p>2$. Let $F$ be a finite extension of $\BQ_p$ with ring of integers $O_F$, and residue field $\BF_q$. Let $\pi$ be a fixed uniformizer of $O_F$. Let $E$ be a quadratic unramified extension of $F$ with ring of integers $O_E$. We denote by $^*$ the nontrivial Galois automorphism of $E$ over $F$.

We fix the standard additive character $\psi:F \rightarrow \BC^{\times}$ that is trivial on $O_F$. Let $V^{+}$ (resp. $V^{-}$) be a split (resp. nonsplit) $2n$-dimensional hermitian vector space over $E$ and let $\CS((V^{\pm})^{2n})$ be the space of Schwartz functions on $(V^{\pm})^{2n}$. We define $V_{r,r}$ to be the split hermitian space of signature $(r,r)$ and let $L_{r,r}$ be a self-dual lattice in $V_{r,r}$. We denote by $\varphi_{r,r}$ the characteristic function of $(L_{r,r})^{2n}$ and we denote by $(V^{\pm})^{[r]}$ the space $V^{\pm} \otimes V_{r,r}$. For any function $\varphi \in \CS((V^{\pm})^{2n})$, we define $\varphi^{[r]}$ by the function $\varphi \otimes \varphi_{r,r} \in \CS(((V^{\pm})^{[r]})^{2n})$.

We denote by $\Gamma_{2n}$ the Iwahori subgroup
\begin{equation*}
	\Gamma_{2n}=\lbrace \gamma=(\gamma_{ij})\in GL_{2n}(O_E)\vert \gamma_{ij} \in \pi O_E \text{ if }i>j \rbrace.
\end{equation*}

We define the sets
\begin{equation*}
	\begin{array}{l}
		V_{2n}(E)=\lbrace Y \in M_{2n,2n}(E) \vert ^tY^*=Y\rbrace,\\
		X_{2n}(E)=\lbrace X \in GL_{2n}(E) \vert ^tX^*=X \rbrace.
	\end{array}
\end{equation*}

For $g \in GL_{2n}(E)$ and $X \in X_{2n}(E)$, we define the group action of $GL_{2n}(E)$ on $X_{2n}(E)$ by $g\cdot X=gX^tg^*$.

For $X,Y \in V_{2n}(E)$, we denote by $\langle X,Y \rangle=Tr(XY)$.

For $X \in M_{m,n}(E)$ and $A \in V_m(E)$, we denote by $A[X]=(   ^tX^*AX)$.

\begin{definition}
	(\cite[definition3.1]{Cho2}) Let $0 \leq h,t \leq n$. If $t$ is even (resp. odd) we define $L_t$ as a lattice of rank $2n$ in $V^+$ (resp. $V^-$) with hermitian form
	\begin{equation*}
		A_t:=\left( \begin{array}{ll}
			1_{2n-t} &\\
			& \pi^{-1}1_t
		\end{array}\right).
	\end{equation*}

Let $1_{h,t} \in \CS((V^{\pm})^{2n})$ be the characteristic function of $(L_t^{\vee})^{2n-h} \times L_t^h$, where $L_t^{\vee}$ is the dual lattice of $L_t$ with respect to the hermitian form.

For an element $B \in X_{2n}(E)$, we define
\begin{equation*}
	W_{h,t}(B,r):=\int_{V_{2n}(E)}\int_{M_{2n+2r,2n}(E)} \psi(\langle Y, A_t^{[r]}[x]-B \rangle) 1_{h,t}^{[r]}(X) dXdY.
\end{equation*}

Here $A_t^{[r]}$ is
\begin{equation*}
	A_t^{[r]}=\left( \begin{array}{ll}
		A_t & \\
		& 1_{2r}
	\end{array}\right),
\end{equation*}
and $dY$ (resp. $dX$) is the Haar measure on $V_{2n}(E)$ (resp. $M_{2n+2r,2n}(E)$) such that
\begin{equation*}
	\int_{V_{2n}(O_E)} dY=1 \text{ (resp. }\int_{M_{2n+2r,2n}(O_E)} dX=1).
\end{equation*}
\end{definition}

We have the following formula for $W_{h,t}(B,r)$.

\begin{lemma}\label{lemma2.2}
	(\cite[Lemma 3.3]{Cho2}) For $B \in X_{2n}(E)$, we have
	\begin{equation*}
		W_{h,t}(B,r)=\mathlarger{\sum}_{Y \in \Gamma_{2n} \backslash X_{2n}(E)} \dfrac{\CG(Y,B)\CF_h(Y,A_t^{[r]})}{\alpha(Y;\Gamma_{2n})}.
	\end{equation*}

	Here, we define
	\begin{equation*}
		\CF_h(Y,A_t^{[r]})=\int_{M_{2n+2r,2n}(E)} \psi(\langle Y, A_t^{[r]}[X]\rangle)1_{h,t}^{[r]}(X)dX,
	\end{equation*}
	and
	\begin{equation*}
		\CG(Y,B)=\int_{\Gamma_{2n}}\psi(\langle Y, -B[\gamma]\rangle)d\gamma,
	\end{equation*}
	where $d\gamma$ is the Haar measure on $M_{2n,2n}(O_{E})$ such that $\int_{M_{2n, 2n}(O_{E})} d\gamma=1$.
	
	Also, we define
	\begin{equation*}
		\alpha(Y; \Gamma_{2n})=\lim_{d \rightarrow \infty} q^{-4dn^2} N_d(Y;\Gamma_{2n}),
	\end{equation*}
	where
	\begin{equation*}
		N_d(Y;\Gamma_{2n})=\vert \lbrace \gamma \in \Gamma_{2n} (\Mod \pi^d) \vert \gamma \cdot Y \equiv Y (\Mod \pi^d) \rbrace \vert.
	\end{equation*}
\end{lemma}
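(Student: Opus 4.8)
The plan is to prove the formula by unfolding the outer $Y$-integral over the $\Gamma_{2n}$-orbits on the set of nondegenerate hermitian matrices. First I would split the phase, $\psi(\langle Y, A_t^{[r]}[X]-B\rangle)=\psi(\langle Y, A_t^{[r]}[X]\rangle)\psi(-\langle Y, B\rangle)$, and carry out the $X$-integral first; since that integrand is compactly supported in $X$ this is legitimate, and it yields
\begin{equation*}
W_{h,t}(B,r)=\int_{V_{2n}(E)}\CF_h(Y, A_t^{[r]})\,\psi(-\langle Y, B\rangle)\,dY ,
\end{equation*}
with $\CF_h(Y, A_t^{[r]})$ exactly the inner integral appearing in the statement. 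The outer integral is only conditionally convergent; I would make sense of it, and of the interchange of integrations, by the usual device of truncating to $Y\in\pi^{-N}M_{2n,2n}(O_E)$ and letting $N\to\infty$, just as for ordinary local densities. Since the degenerate locus $V_{2n}(E)\setminus X_{2n}(E)$ has $dY$-measure zero, the domain may be taken to be $X_{2n}(E)$.

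The second step is an invariance property. I would check that $1_{h,t}^{[r]}$ is invariant under right translation by $\Gamma_{2n}$: its support consists of matrices whose first $2n-h$ columns lie in $L_t^{\vee}$ and whose last $h$ columns lie in $L_t$; since $A_t$ has $\pi^{-1}$ in its last $t$ diagonal entries one has $L_t^{\vee}\subseteq L_t$, and the Iwahori condition $\gamma_{ij}\in\pi O_E$ for $i>j$ forces $\pi L_t\subseteq L_t^{\vee}$, which is precisely what is needed to see that right multiplication by $\gamma\in\Gamma_{2n}$ preserves this support (the self-dual block $\varphi_{r,r}$ is trivially right $GL_{2n}(O_E)$-invariant). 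Combined with $A_t^{[r]}[X\gamma]={}^t\gamma^{*}(A_t^{[r]}[X])\gamma$, hence $\langle Y, A_t^{[r]}[X\gamma]\rangle=\langle\gamma\cdot Y, A_t^{[r]}[X]\rangle$, the substitution $X\mapsto X\gamma$, which is measure-preserving because $\gamma\in GL_{2n}(O_E)$, then gives $\CF_h(\gamma\cdot Y, A_t^{[r]})=\CF_h(Y, A_t^{[r]})$ for all $\gamma\in\Gamma_{2n}$.

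Third, I would decompose $X_{2n}(E)=\bigsqcup_{Y}\Gamma_{2n}\cdot Y$ over $Y\in\Gamma_{2n}\backslash X_{2n}(E)$ (these orbits are open, being images of open sets under the open quotient maps $GL_{2n}(E)\to GL_{2n}(E)/U(Y)$), so that by the invariance just established
\begin{equation*}
W_{h,t}(B,r)=\sum_{Y\in\Gamma_{2n}\backslash X_{2n}(E)}\CF_h(Y, A_t^{[r]})\int_{\Gamma_{2n}\cdot Y}\psi(-\langle Y', B\rangle)\,dY' .
\end{equation*}
It then remains to show $\int_{\Gamma_{2n}\cdot Y}\psi(-\langle Y', B\rangle)\,dY'=\CG(Y,B)/\alpha(Y;\Gamma_{2n})$. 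Parametrizing $Y'=\gamma\cdot Y$ and using $\langle\gamma\cdot Y, B\rangle=\langle Y, B[\gamma]\rangle$, the push-forward of the normalized Haar measure $d\gamma$ on $M_{2n,2n}(O_E)$ under $\gamma\mapsto\gamma\cdot Y$ is a $\Gamma_{2n}$-invariant measure on the orbit, hence equals $\kappa\,dY'|_{\Gamma_{2n}\cdot Y}$ for a single constant $\kappa$; testing both sides against the ball $\{Y'\equiv Y\ (\Mod\pi^d)\}$, which for $d\gg 0$ lies inside the orbit and has $dY$-volume $q^{-4n^2d}$ while its preimage in $\Gamma_{2n}$ has $d\gamma$-mass $q^{-8n^2d}N_d(Y;\Gamma_{2n})$, forces $\kappa=q^{-4n^2d}N_d(Y;\Gamma_{2n})=\alpha(Y;\Gamma_{2n})$. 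Since the push-forward integral equals $\int_{\Gamma_{2n}}\psi(-\langle Y, B[\gamma]\rangle)\,d\gamma=\CG(Y,B)$, the desired identity follows, and inserting it into the displayed sum completes the argument.

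The step I expect to be the main obstacle is this last measure comparison: one must reconcile the three normalizations in play — the $GL_{2n}(O_E)$-normalized Haar measure on $M_{2n,2n}$ used to define $\CG$ and $N_d$, the measure $dY$ on $V_{2n}(E)$ restricted to a single orbit, and the stabilizer volume repackaged as $\alpha(Y;\Gamma_{2n})$ — and check that all powers of $q$ cancel to leave exactly $\CG(Y,B)/\alpha(Y;\Gamma_{2n})$; there is also the minor wrinkle that when $Y$ has entries of negative valuation the congruence $\gamma\cdot Y\equiv Y\ (\Mod\pi^d)$ is controlled by $\gamma$ only modulo a higher power of $\pi$, which shifts but does not affect the limit. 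The convergence and interchange in the first step are comparatively routine, being no worse than for the classical representation densities, but should be recorded for completeness.
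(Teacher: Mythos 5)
Your argument is correct and is essentially the standard approach: the paper does not reprove this lemma but quotes it from \cite[Lemma 3.3]{Cho2}, whose proof (in the style of \cite{Hir}) proceeds exactly as you do --- right $\Gamma_{2n}$-invariance of $1_{h,t}^{[r]}$ (using $\pi L_t\subseteq L_t^{\vee}\subseteq L_t$ together with the Iwahori congruence on the lower-triangular entries), decomposition of $X_{2n}(E)$ into open compact Iwahori orbits, and identification of each orbital integral with $\CG(Y,B)/\alpha(Y;\Gamma_{2n})$ by comparing the pushforward of the Haar measure $d\gamma$ with $dY$ restricted to the orbit. The only items to write out in full are the ones you already flag: absolute convergence of the orbit sum and the interchange with the truncation limit (using that $\CG(Y,B)$ vanishes once $Y$ is too non-integral relative to $B$), and the normalization bookkeeping in the computation of the constant $\kappa$ when $Y$ has entries of negative valuation.
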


\begin{definition}
	One can regard $\CF_h(Y,A_t^{[r]})$ and $W_{h,t}(B,r)$ as functions of $X=(-q)^{-2r}$. We define
	\begin{equation*}
		\CF_h'(Y,A_t^{[0]}):=-\dfrac{d}{dX}\CF_h(Y,A_t^{[r]})\vert_{X=1}.
	\end{equation*}
Also, we define
\begin{equation*}
	W_{h,t}'(B,0):=-\dfrac{d}{dX}W_{h,t}(B,r)\vert_{X=1}.
\end{equation*}
\end{definition}

\subsection{Special cycles}\label{subsection2.2}

In this subsection, we will recall some facts and definitions of unitary Rapoport-Zink spaces $\CN^h_{E/F}(1,n-1)$ and special cycles.

Here, we assume that $F$ is an unramified finite extension of $\BQ_p$. Let $\breve{E}$ be the completion of a maximal unramified extension of $E$ and let $O_{\breve{E}}$ be its ring of integers. We denote by $k$ the residue field of $O_{\breve{E}}$.

Let $h$ be an integer such that $0 \leq h \leq n$. To define $\CN^h_{E/F}(1,n-1)$, we need to fix a triple $(\BX,i_{\BX},\lambda_{\BX})$ consisting of the following data:

\begin{enumerate}
	\item $\BX$ is a supersingular strict formal $O_F$-module of $F$-height 2n over $\BF_{q^2}$. We refer to \cite[Definition 2.1]{Cho} for complete details.
	
	\item $i_{\BX}:O_E \rightarrow \End \BX$ is an $O_E$-action on $\BX$ that extends the $O_F$-action on $\BX$ and it satisfies the following signature condition:
	For all $a\in O_E$
	\begin{equation*}
		\Charpol(i_{\BX}(a)\vert \Lie(\BX))=(T-a)(T-a^*)^{n-1}.
	\end{equation*}

	\item $\lambda_{\BX}$ is a polarization
	\begin{equation*}
		\lambda_{\BX}:\BX \rightarrow \BX^{\vee},
	\end{equation*}
such that the corresponding Rosati involution induces the involution $*$ on $O_E$. Also, we assume that $\Ker \lambda_{\BX} \subset \BX[\pi]$ and its order is $q^{2h}$.
\end{enumerate}

This triple $(\BX,i_{\BX},\lambda_{\BX})$ is called the framing object.

Now, let $(Nilp)$ be the category of $O_E$-schemes $S$ such that $\pi$ is locally nilpotent on $S$. Then, we can define the functor $\CN^h_{E/F}(1,n-1)$ sending each $S \in (Nilp)$ to the set of isomorphism classes of tuples $(X,i_X,\lambda_X,\rho_X)$. Here $X$ is a supersingular strict formal $O_F$-module, $i_X$ is an $O_E$-action, $\lambda_X$ is a polarization, and $\rho_X$ is a certain $O_E$-linear quasi-isogeny. We refer \cite[Definition 2.1]{Cho} for complete details.

The functor $\CN^h_{E/F}(1,n-1) \otimes O_{\breve{E}}$ is representable by a formal scheme over $\Spf O_{\breve{E}}$ that is locally formally of finite type. Also, this formal scheme is regular.\\

Now, we recall the definitions of special cycles from \cite[Section 5]{Cho}. Let  $(\overline{\BY},i_{\overline{\BY}},\lambda_{\overline{\BY}})$ be the framing object of $\CN^0_{E/F}(0,1)$.

\begin{definition} Let $\CN^0=\CN^0_{E/F}(0,1) \otimes O_{\breve{E}}$, $\CN=\CN^h_{E/F}(1,n-1)\otimes O_{\breve{E}}$, and $\widehat{\CN}=\CN^{n-h}_{E/F}(1,n-1)\otimes O_{\breve{E}}$.
	\begin{enumerate}
		\item (\cite[Definition 3.1]{KR2}) We define the space of special homomorphisms as the $E$-vector space
		\begin{equation*}
			\BV:=\Hom_{O_E}(\overline{\BY},\BX) \otimes_{\BZ} \BQ.
		\end{equation*}
	We define a hermitian form $h$ on $\BV$ as
	\begin{equation*}
		\forall x \text{ and }y, \text{ }h(x,y)=\lambda_{\overline{\BY}}^{-1} \circ y^{\vee} \circ \lambda_{\BX} \circ x \in \End_{O_E}{\overline{\BY}} \otimes \BQ\simeq E.
	\end{equation*}

\item We denote by $\theta:\CN \rightarrow \widehat{\CN}$ the isomorphism which is defined in \cite[Remark 5.2]{Cho}.\\

\item For $x \in \BV$, we define the special cycles $\CZ(x)$ to be the closed formal subscheme of $\CN^0 \times \CN$ with the following lifting property: 

For each $O_{\breve{E}}$-scheme $S$ such that $\pi$ is locally nilpotent, $\CZ(x)(S)$ is the set of all points $\eta=(\overline{Y},i_{\overline{Y}},\lambda_{\overline{Y}},\rho_{\overline{Y}},X,i_X,\lambda_X,\rho_X)$ in $\CN^0 \times \CN$ such that $\rho_{X}^{-1} \circ x \circ \rho_{\overline{\BY}}$ extends to a homomorphism from $\overline{Y}$ to $X$.\\

\item For each $y \in \BV$, we define the special cycles $\CY(y)$ in $\CN^0_{E/F}(0,1) \times \CN^h_{E/F}(1,n-1)$ as follows. First, we consider the special cycle $\CZ(\lambda_{\BX}\circ y) \in \CN^0 \times \widehat{\CN}$. Then we define $\CY(y)$ as $(id \times \theta)^{-1}(\CZ(\lambda_{\BX}\circ y))$ in $\CN^0 \times \CN$.\\

Note that $\CN^0_{E/F}(0,1)$ can be identified with $\Spf O_{\breve{E}}$, and hence we can regard $\CZ(x)$ and $\CY(y)$ as closed formal subschemes of $\CN$.
	\end{enumerate} 
\end{definition}

\begin{proposition}
	(\cite[Proposition 5.9]{Cho}) For $x,y \in \BV\backslash \lbrace 0 \rbrace,$ $\CZ(x)$ and $\CY(y)$ are Cartier divisors in $\CN^0 \times \CN$ (or empty).
\end{proposition}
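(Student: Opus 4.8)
The plan is to reduce the assertion to the single statement that, for $x\in\BV\setminus\lbrace 0\rbrace$, the closed formal subscheme $\CZ(x)\subset\CN$ is, locally on $\CN$, the vanishing locus of one nonzerodivisor. As already observed in the excerpt, $\CN^0$ is identified with $\Spf O_{\breve E}$, so $\CN^0\times\CN=\CN$, the universal object over the $\CN^0$--factor is the $O_E$--CM lift $\CE$ of $\overline{\BY}$ over $O_{\breve E}$, and $\CZ(x),\CY(y)$ are closed formal subschemes of $\CN$: concretely, $\CZ(x)$ is the locus over which the quasi--homomorphism $u:=\rho_X^{-1}\circ x\circ\rho_{\overline{\BY}}\colon\CE\to X$ defined on the special fibre extends to an $O_E$--linear homomorphism. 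Since $\theta\colon\CN\to\widehat{\CN}$ is an isomorphism and $\widehat{\CN}=\CN^{n-h}_{E/F}(1,n-1)\otimes O_{\breve E}$ is a Rapoport--Zink space of the same kind, the divisor $\CY(y)=(\mathrm{id}\times\theta)^{-1}\bigl(\CZ(\lambda_{\BX}\circ y)\bigr)$ is the transport under an isomorphism of the $\CZ$--cycle attached to the nonzero special homomorphism $\lambda_{\BX}\circ y$; so it suffices to treat $\CZ(x)$, and I may also assume $n\geq 2$ since for $n=1$ the scheme $\CN$ is one--dimensional and the claim is immediate.

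For the local structure, fix a closed point of $\CN$, let $R$ be its complete local ring --- regular, hence a domain and a unique factorization domain --- and pull back $(X,i_X,\lambda_X)$ and $\CE$ to $\Spec R$. After clearing denominators, $\pi^k u$ is an honest homomorphism on the special fibre for some $k$, so $\CZ(x)\cap\Spec R$ is the locus over which $\pi^k u$ lifts to $\Spec R$ with $\pi^k$--divisible lift --- equivalently, over which the induced map $\CE[\pi^k]\to X[\pi^k]$ of finite flat group schemes vanishes. I would compute this locus by Grothendieck--Messing deformation theory: lifting $u$ along a square--zero thickening is the condition that the crystalline realization $\BD(u)$ carry the Hodge filtration of $\CE$ into that of $X$, and the obstruction lies in an $O_E$--isotypic summand of $\Hom\bigl(\mathrm{Fil}(\CE),\BD(X)/\mathrm{Fil}(X)\bigr)$. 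The decisive point is that the signature condition $(0,1)$ for $\overline{\BY}$ makes the Hodge filtration of $\CE$ a line bundle concentrated in one isotypic component, while the signature condition $(1,n-1)$ for $X$ together with the polarization $\lambda_X$ forces the summand of $\BD(X)/\mathrm{Fil}(X)$ paired against it to be a line bundle as well; hence the obstruction to lifting $u$ is the vanishing of a single element $f\in R$, so $\CZ(x)\cap\Spec R=V(f)$. I expect this bookkeeping --- tracking the two $O_E$--isotypic decompositions under $\lambda_X$, and combining it with the divisibility condition coming from $\CE[\pi^k]$ --- to be the main technical obstacle; it is the Kudla--Rapoport mechanism, carried out for these spaces in \cite[Proposition 5.9]{Cho} in the spirit of \cite{KR2}.

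It remains to show that $f$ is a nonzerodivisor; as $R$ is a domain this is the assertion that $\CZ(x)$ contains no irreducible component of $\CN$ (if $f$ happens to be a unit, one simply gets $\CZ(x)=\emptyset$). Here $x\neq 0$ is essential: were $\CZ(x)$ to contain a component, $u$ would be a nonzero $O_E$--linear homomorphism $\CE\to X$ over its generic point, hence over a point of the nonempty generic fibre $\CN_{\breve E}$, exhibiting an $O_E$--stable subgroup of $X_{\breve E}$ isogenous to the CM $p$--divisible group $\CE_{\breve E}$; but at a sufficiently general point of $\CN_{\breve E}$ the Hodge filtration of $X$ is general enough that the Schubert--type condition cutting out $\CZ(x)$ is proper, i.e. $f\not\equiv 0$, so $\CZ(x)$ cannot absorb a whole component. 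Therefore $f$ is a nonzerodivisor, $V(f)$ is an effective Cartier divisor, and $\CZ(x)$ --- and with it $\CY(y)$ --- is a Cartier divisor or empty. (Alternatively, whenever $\CZ(x)$ is moreover known to be a regular formal scheme of codimension one in $\CN$, the Cartier property is automatic: the ideal of a regular closed subscheme of codimension one in a regular scheme is locally generated by a height--one prime, which is principal because regular local rings are unique factorization domains.)
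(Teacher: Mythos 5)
A preliminary remark: the paper itself contains no proof of this statement --- it is quoted from \cite[Proposition 5.9]{Cho} --- so the only meaningful comparison is with the Kudla--Rapoport argument (\cite[Proposition 3.5]{KR2}) that \cite{Cho} adapts, and your outline does follow that template: reduce $\CY(y)$ to $\CZ(\lambda_{\BX}\circ y)$ via the isomorphism $\theta$, cut $\CZ(x)$ out locally by one equation via crystalline deformation theory, and rule out components. However, two steps are genuinely incomplete. First, the principality step: Grothendieck--Messing only says that lifting $u$ across a square-zero pd-thickening is obstructed by one element of a line bundle (and that line-bundle statement is forced by the signature conditions $(0,1)$ and $(1,n-1)$ through the $O_E$-isotypic decomposition of the Lie algebras --- the polarization $\lambda_X$ plays no role here, contrary to what you write). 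Passing from this infinitesimal statement to the assertion that the ideal of $\CZ(x)$ in the complete local ring $R$ is generated by a single element requires the successive-approximation/display-theoretic argument that is the actual content of \cite[Proposition 3.5]{KR2}; you do not carry it out, and in fact you defer it to \cite[Proposition 5.9]{Cho}, i.e.\ to the very statement being proved, so as a self-contained proof this part is circular.

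Second, the nonzerodivisor step has a hole: from ``$\CZ(x)$ contains an irreducible component of $\CN$'' you jump to ``a point of the nonempty generic fibre $\CN_{\breve E}$'', which tacitly assumes that no irreducible component of $\CN$ is vertical, i.e.\ contained in the special fibre. For these parahoric-level spaces that is a flatness assertion over $O_{\breve E}$ which regularity alone does not provide and which must be quoted or proved. Even granting it, the sentence ``at a sufficiently general point the Hodge filtration of $X$ is general enough that the condition is proper'' is precisely the point at issue rather than an argument: one needs either the period morphism on the rigid generic fibre (open onto its image, with $\CZ(x)$ pulled back from a proper linear section of the period domain), or, as is more usual for these basic Rapoport--Zink spaces, an argument on the reduced special fibre via its Bruhat--Tits stratification showing that at the generic point of each irreducible component the quasi-homomorphism $\rho_X^{-1}\circ x\circ\rho_{\overline{\BY}}$ does not extend. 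As written, the final step asserts the conclusion instead of proving it; the remaining reductions (the treatment of $\CY(y)$ via $\theta$, and the reformulation of the extension condition after clearing denominators) are fine.
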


\subsection{Conjectures on the arithmetic intersection numbers of special cycles}\label{subsection2.3} In this subsection, we will recall the conjectural formula for the arithmetic intersection numbers of special cycles in \cite[Conjecture 3.17, 3.26]{Cho2}.

First, we recall \cite[Theorem 3.16]{Cho2}.

\begin{proposition}
	(\cite[Theorem 3.16]{Cho2}) There are unique constants
	\begin{equation*}
		\beta_0^h,\dots,\beta^h_{n-1},\beta_0^{2n-h},\dots,\beta_{n-1}^{2n-h},\delta_h,
	\end{equation*}
such that
	\begin{equation*}
		\begin{array}{l}
			W_{h,n}'(B,0)-W_{2n-h,n}'(B^{\vee_h},0)\\
			=\mathlarger{\sum}_{0 \leq i \leq n-1} \beta_i^hW_{h,i}(B,0)
			-\mathlarger{\sum}_{0 \leq j \leq n-1}\beta_j^{2n-h}W_{2n-h,j}(B^{\vee_h},0)
			+\delta_hW_{h,n}(B,0).
		\end{array}
	\end{equation*}

Here, for a matrix
\begin{equation*}
	B=\left(\begin{array}{ll}
		A & B\\
		C & D
	\end{array}\right),
\end{equation*}
in $X_{2n}(E)$, where 
\begin{equation*}
	\begin{array}{l}
		A \in M_{2n-h,2n-h}(E);\\
		B \in M_{2n-h,h}(E);\\
		C \in M_{h,2n-h}(E);\\
		D \in M_{h,h}(E),
	\end{array}
\end{equation*}
we denote by $B^{\vee_h}$ the matrix
\begin{equation*}
		B^{\vee_h}=\left(\begin{array}{ll}
		\pi D & C\\
		B & \pi^{-1}A
	\end{array}\right)
\end{equation*}
\end{proposition}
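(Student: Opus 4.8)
The plan is to regard each $W$-function, via Lemma~\ref{lemma2.2}, as a polynomial in $X=(-q)^{-2r}$, and to deduce the identity from a functional equation that turns the operation $B\mapsto B^{\vee_h}$ into an explicit involution of the variable $X$; the desired constants then emerge as the coefficients expressing one ``anti-invariant'' polynomial in a spanning set of such polynomials.

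\textbf{Reduction to an identity of polynomials.} By Lemma~\ref{lemma2.2}, for fixed $B$ the function $r\mapsto W_{h,t}(B,r)$ is a sum over $Y\in\Gamma_{2n}\backslash X_{2n}(E)$ of $\CG(Y,B)/\alpha(Y;\Gamma_{2n})$ --- independent of $X$ --- times $\CF_h(Y,A_t^{[r]})$. Each $\CF_h(Y,A_t^{[r]})$ is a Gauss-sum integral over $M_{2n+2r,2n}(E)$ which the standard local-density calculus (Hironaka/Kitaoka-type recursions) evaluates, on each elementary-divisor type of $Y$, as an explicit polynomial in $X$, with only finitely many types contributing; so $W_{h,t}(B,\cdot)$ is the value at $X=1$ of an explicit polynomial $X\mapsto W_{h,t}(B,X)$, and $W_{h,t}'(B,0)$ is $-\tfrac{d}{dX}$ of it at $X=1$. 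It thus suffices to exhibit an identity, valid for all $B$ with constants independent of $B$, of the form
\[
-\frac{d}{dX}\Big[W_{h,n}(B,X)-W_{2n-h,n}(B^{\vee_h},X)\Big]
=\sum_{0\le i\le n-1}\beta_i^h\,W_{h,i}(B,X)-\sum_{0\le j\le n-1}\beta_j^{2n-h}\,W_{2n-h,j}(B^{\vee_h},X)+\delta_h\,W_{h,n}(B,X),
\]
as polynomials in $X$ (the proposition follows by setting $X=1$); producing it at the level of polynomials is also what makes the constants rigid.

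\textbf{The functional equation.} The key input is that $\vee_h$ is absorbed by the Whittaker functions into the substitution $X\mapsto\Fc/X$ for an explicit power of $q$, say $\Fc$: one shows that $W_{2n-h,t}(B^{\vee_h},X)$ equals $W_{h,t}(B,\Fc/X)$ up to a fixed Laurent monomial in $X$, for $0\le t\le n$. This is proved by unwinding the defining integrals: $\vee_h$ is a block-swap followed by a $\pi$-rescaling of the two blocks, and pushing this through the integrals over $V_{2n}(E)$ and $M_{2n+2r,2n}(E)$ --- the self-dual block $1_{2r}$ of $A_t^{[r]}$ absorbing the rescaling as a shift in $r$ --- yields precisely $X\mapsto\Fc/X$. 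Granting this, the left side of the displayed identity becomes $-\tfrac{d}{dX}$ of a polynomial anti-invariant under $X\mapsto\Fc/X$ and of bounded degree (the bound coming again from the explicit $\CF$-formulas); the space of such polynomials is spanned by the corresponding combinations of $W_{h,i}(B,\cdot)$ $(i\le n)$ and $W_{2n-h,j}(B^{\vee_h},\cdot)$ $(j<n)$ --- this is the now-standard ``second term'' mechanism (cf. \cite{CY}) adapted to the Iwahori/parahoric setting --- and expanding in that spanning set produces $\beta_i^h,\beta_j^{2n-h},\delta_h$. Since the $\CF$-formulas do not depend on $B$, neither do the constants. Establishing this functional equation, together with the requisite degree bounds, is the step I expect to be the main obstacle.

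\textbf{Uniqueness.} It suffices that the $2n+1$ functions $B\mapsto W_{h,i}(B,0)$ $(0\le i\le n-1)$, $B\mapsto W_{2n-h,j}(B^{\vee_h},0)$ $(0\le j\le n-1)$ and $B\mapsto W_{h,n}(B,0)$ be linearly independent over $\BC$. Evaluating a hypothetical relation on a finite family of diagonal $B$ --- say $B=\pi^{c}A_t$ with $0\le t\le n$ and $c$ in a short range, so that $B^{\vee_h}$ is again diagonal --- and inserting the explicit values of these $W$'s, equivalently the vanishing properties of the underlying local representation densities ($W_{h,t}(B,0)\ne0$ only when the hermitian lattice of $B$ embeds integrally into the relevant $L_t^{\vee}$), one finds the matrix of $W$-values to be triangular with nonzero diagonal; hence the relation is trivial, and the constants in the proposition are unique.
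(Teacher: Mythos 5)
Your proposal has a genuine gap at its core. The entire argument funnels through two claims that are neither proved nor provable in the form stated. First, the ``functional equation'' $W_{2n-h,t}(B^{\vee_h},X)\doteq(\text{monomial})\cdot W_{h,t}(B,\Fc/X)$ is simply asserted, and you yourself flag it as the main obstacle; but the functional equations actually available in this setting (Hironaka's, the one quoted in the paper as \cite[(3.2.0.2)]{LZ} and used in Proposition \ref{proposition3.4}) relate $X\leftrightarrow 1/X$ for the \emph{same} matrix $B$, while the duality $B\mapsto B^{\vee_h}$ enters through Fourier/Poisson duality of the characteristic functions $1_{h,t}$ at a \emph{fixed} $r$; a hybrid relation mixing the two is exactly what would need to be established, and nothing in your sketch does so. Second, your reduction asks for the identity to hold as polynomials in $X$, for all $B$, with $B$-independent constants. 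That is strictly stronger than the statement and is the wrong level of generality: the mechanism that makes a finite, $B$-independent linear combination possible exists only at $X=1$ (i.e.\ $r=0$), where the kernels $\CF_h(Y,A_t^{[0]})$ and the derivative kernels collapse to functions of a single invariant of $Y$ taking just $2n+1$ values on the support of $\CG(\cdot,B)$. Away from $X=1$ the kernels depend on finer invariants of $Y$ and the degrees in $X$ grow with $\val\det B$ (compare the Cho--Yamauchi expansion \eqref{equation3.2.3.4.1}, whose $X$-derivative at general $X$ retains the weights $l(L'/L)$ that only cancel at $X=1$ via the functional equation), so a fixed $(2n+1)$-term combination cannot be expected to reproduce the derivative identically in $X$. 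The subsequent claim that the relevant ``anti-invariant'' polynomials are spanned by the listed $W$'s, and the triangularity asserted in your uniqueness step, are likewise unverified.

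For comparison, the proof this paper relies on (from \cite[Theorem 3.16]{Cho2}, recalled after the proposition) is pointwise in $Y$ and purely finite-dimensional: one writes $W_{h,t}$ and its derivative via Lemma \ref{lemma2.2}, observes that at $r=0$ the kernels are constant on the $2n+1$ relevant classes of $Y$, and records their values in the $(2n+1)\times(2n+1)$ matrix $\FB$ with entries $m_{it}$, $n_{it}$; the factorization $(-q)^{2n(2n-h)}\FB=\FX\Fa_h$ into a Vandermonde matrix times an invertible diagonal matrix shows $\FB$ is invertible, which simultaneously gives existence and uniqueness of $\beta_i^h,\beta_j^{2n-h},\delta_h$, and summing against $\CG(Y,B)/\alpha(Y;\Gamma_{2n})$ yields the identity for every $B$. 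If you want to salvage your approach, you would have to either prove your hybrid functional equation together with the spanning statement, or abandon the polynomial-in-$X$ formulation and work at $X=1$ with the finite class decomposition of $Y$, which is essentially the cited argument.
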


In the proof of \cite[Theorem 3.16]{Cho2}, we showed that the constants
\begin{equation*}
	\beta_0^h,\dots,\beta^h_{n-1},\beta_0^{2n-h},\dots,\beta_{n-1}^{2n-h},\delta_h,
\end{equation*}
satisfies the following matrix relation.

	\begin{equation*}
	\FB\left(\begin{array}{c}
		\beta_0^h\\
		\vdots\\
		\beta_{n-1}^h\\
		-\beta_{0}^{2n-h}\\
		\vdots\\
		-\beta_{n-1}^{2n-h}\\
		\delta_h
	\end{array}
	\middle)=(-q)^{-2n(2n-h)}\middle(\begin{array}{c}
		-(2n-h)\\
		\vdots\\
		-1\\
		0\\
		1\\
		\vdots\\
		h
	\end{array}\right)
\end{equation*}

where
	\begin{equation*}
	\begin{array}{l}
		m_{it}=(-q)^{(n-t)(i-(2n-h+1))-2t(2n-h)}\\
		n_{it}=q^{-(2n-h)^2+h^2}(-q)^{-(n-t)(i-(2n-h+1))-2th},
	\end{array}
\end{equation*}
and $\FB$ is the following $(2n+1) \times (2n+1)$ matrix.
	\begin{equation*}
	\begin{array}{l}
		\FB=\\\left(\begin{array}{ccccccc}
			m_{10}& \dots& m_{1(n-1)}& n_{10}& \dots& n_{1(n-1)} & m_{1n}\\
			\vdots&\vdots&\vdots&\vdots&\vdots&\vdots&\vdots\\
			m_{(2n+1)0}& \dots &m_{(2n+1)(n-1)} & n_{(2n+1)0} & \dots & n_{(2n+1)(n-1)}&m_{2n+1,n}
		\end{array}\right)
	\end{array}
\end{equation*}

We know that $\FB$ is invertible, and hence we can compute

\begin{equation*}
	\left(\begin{array}{c}
		\beta_0^h\\
		\vdots\\
		\beta_{n-1}^h\\
		-\beta_{0}^{2n-h}\\
		\vdots\\
		-\beta_{n-1}^{2n-h}\\
		\delta_h
	\end{array}
	\middle)=(-q)^{-2n(2n-h)}\FB^{-1}\middle(\begin{array}{c}
		-(2n-h)\\
		\vdots\\
		-1\\
		0\\
		1\\
		\vdots\\
		h
	\end{array}\right)
\end{equation*}

Now, let us compute these constants explicitly.

\begin{proposition}\label{proposition2.7}

We have
	\begin{equation*}
		\beta^h_i=\alpha_{i+1,h}^{-1}\mathlarger{\mathlarger{\Biggl(}}\dfrac{\mathlarger{\prod}_{1 \leq m \leq 2n, m \neq i+1} (1-x_m)}{\mathlarger{\prod}_{1 \leq m \leq 2n+1, m \neq i+1}(x_m-x_{i+1})}\mathlarger{\mathlarger{\Biggl)}},
	\end{equation*}
where
	\begin{equation*}
	\begin{array}{ll}
		\alpha_{i,h}=(-q)^{(n+1-i)(2n-h)},& 1 \leq i \leq n;\\
		\alpha_{i,h}=(-q)^{(2n+1-i)(2n+h)},& n+1 \leq i \leq 2n;\\
		\alpha_{2n+1,h}=1,
	\end{array}
\end{equation*}
and
\begin{equation*}
	\begin{array}{ll}
		x_i=(-q)^{n+1-i},& 1 \leq i \leq n; \\
		x_i=(-q)^{i-2n-1},& n+1 \leq i \leq 2n;\\
		x_{2n+1}=1.
	\end{array}
\end{equation*}
	
\end{proposition}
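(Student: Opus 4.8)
The plan is to solve the linear system
\[
\begin{pmatrix}
\beta_0^h\\ \vdots\\ \beta_{n-1}^h\\ -\beta_0^{2n-h}\\ \vdots\\ -\beta_{n-1}^{2n-h}\\ \delta_h
\end{pmatrix}
=(-q)^{-2n(2n-h)}\,\FB^{-1}
\begin{pmatrix}
-(2n-h)\\ \vdots\\ -1\\ 0\\ 1\\ \vdots\\ h
\end{pmatrix}
\]
by recognizing $\FB$ as a (rescaled) Vandermonde matrix. The key observation is that each entry of $\FB$ has the form $m_{it}$ or $n_{it}$, and after factoring out the column-dependent constants $\alpha_{j,h}$ (so that column $j$ with $1\le j\le 2n$ corresponds to $t=j-1$ via $m_{i,j-1}$ and $t = j-1-n$ via $n$, and column $2n+1$ to $m_{i,n}$), every entry becomes a pure power $x_j^{\,i-1}$ (up to reindexing $i$), where the $x_j$ are exactly the quantities listed in the statement: $x_j=(-q)^{n+1-j}$ for $1\le j\le n$, $x_j=(-q)^{j-2n-1}$ for $n+1\le j\le 2n$, and $x_{2n+1}=1$. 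Thus $\FB = V\cdot\diag(\alpha_{1,h},\dots,\alpha_{2n,h},\alpha_{2n+1,h})$ for the Vandermonde matrix $V=(x_j^{\,i-1})_{i,j}$, possibly up to an overall row-scaling which I will track carefully.

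First I would pin down the exact dictionary between the indices $(i,t)$ in $m_{it},n_{it}$ and the Vandermonde exponents, checking that the powers of $(-q)$ in
\[
m_{it}=(-q)^{(n-t)(i-(2n-h+1))-2t(2n-h)},\qquad
n_{it}=q^{-(2n-h)^2+h^2}(-q)^{-(n-t)(i-(2n-h+1))-2th}
\]
do indeed separate into a factor depending only on the column (which becomes $\alpha_{j,h}$) times $x_j$ raised to a power that is affine in $i$; the constant prefactor $q^{-(2n-h)^2+h^2}$ in $n_{it}$ and the shifts by $2n-h+1$ are exactly what is needed to make this work, and I would verify the endpoint columns ($t=n$ giving $x=1$, and the $n$-th column in each block) separately. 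Second, once $\FB = V D$ with $D=\diag(\alpha_{\bullet,h})$, I invert: $\FB^{-1}=D^{-1}V^{-1}$, and apply the classical formula for $V^{-1}$ acting on a vector. Here the right-hand side vector $\big(-(2n-h),\dots,-1,0,1,\dots,h\big)^t$ has entries that are themselves affine in the row index, so $V^{-1}$ applied to it is a divided-difference / Lagrange-interpolation expression: the $j$-th component is
\[
\frac{\text{(value of a linear polynomial evaluated at }x_j)}{\prod_{m\ne j}(x_j-x_m)}
\]
possibly with a derivative correction, and I would identify the numerator linear polynomial from the pattern of the RHS. Combining with the $\alpha_{j,h}^{-1}$ from $D^{-1}$ and the global $(-q)^{-2n(2n-h)}$ prefactor, and simplifying the sign $\prod_{m\ne j}(x_j-x_m)$ versus $\prod_{m\ne j}(x_m-x_j)$, yields the stated formula for $\beta_i^h$ (taking $j=i+1$).

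The main obstacle I expect is purely bookkeeping: correctly matching the two index conventions so that the Vandermonde structure emerges cleanly, and in particular handling the asymmetry between the $m$-columns and $n$-columns (the extra $q^{-(2n-h)^2+h^2}$ and the sign flips in the exponent of $(-q)$) without sign errors, as well as treating the special column $m_{in}$ and the row $i=2n+1$. Once the identification $\FB=VD$ is established, the inversion is the standard Vandermonde inverse applied to a vector whose entries are a linear function of the index, and the numerator $\prod_{1\le m\le 2n,\,m\ne i+1}(1-x_m)$ in the claimed formula should fall out as the evaluation of the relevant Lagrange basis polynomial together with the fact that the RHS vector is (up to the known affine change of index) the vector of values of the function "$1$" at all nodes except a shift — more precisely, it is the interpolation data forcing the numerator to be the product of $(1-x_m)$ over the other nodes. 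I would double-check the final expression against a small case (e.g. $n=1$) to confirm the normalization constants $\alpha_{i,h}$ and the node values $x_i$ are correct.
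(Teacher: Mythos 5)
Your proposal follows essentially the same route as the paper: one writes $(-q)^{2n(2n-h)}\FB=\FX\,\Fa_h$ with $\FX$ the Vandermonde matrix in the nodes $x_1,\dots,x_{2n+1}$ and $\Fa_h=\diag(\alpha_{1,h},\dots,\alpha_{2n+1,h})$, inverts via the explicit Vandermonde inverse, and exploits that the right-hand vector is affine in the index. The paper packages your ``derivative correction'' step as $\beta_i^h$ being (up to $\alpha_{i+1,h}^{-1}$) the $z$-derivative at $z=1$ of $\prod_{m\neq i+1}(1-x_mz)\big/\bigl(z^{h}\prod_{m\neq i+1}(x_m-x_{i+1})\bigr)$, with the factor $1-x_{2n+1}z$ (since $x_{2n+1}=1$) killing all but one term and yielding exactly the numerator $\prod_{1\leq m\leq 2n,\,m\neq i+1}(1-x_m)$, which is the clean form of the interpolation argument you sketch.
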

\begin{proof}
	Indeed, this computation was almost done in \cite[Proposition A.1]{Cho2}. Let $\FX$ be the following Vandermonde matrix.

	\begin{equation*}
	\FX=\left( \begin{array}{cccc}
		1 & 1& \dots  & 1\\
		x_1 &x_2 &    &x_{2n+1}\\
		x_1^2&x_2^2 &  & x_{2n+1}^2\\
		\vdots & & \ddots & \vdots\\
		x_1^{2n} & x_2^{2n} & \dots & x_{2n+1}^{2n}
	\end{array}\right).
\end{equation*}

Also, we define a diagonal matrix $\Fa_h$ as
	\begin{equation*}
	\Fa_h=\left( \begin{array}{cccc}
		\alpha_{1,h} & & & \\
		 &\alpha_{2,h} &  &\\
		 & & \ddots &\\
		 &  &  & \alpha_{2n+1,h}
	\end{array}\right).
\end{equation*}

Then, we can check that $(-q)^{2n(2n-h)}\FB=\FX\Fa_h.$

Therefore, we have

\begin{equation*}
	\left(\begin{array}{c}
		\beta_0^h\\
		\vdots\\
		\beta_{n-1}^h\\
		-\beta_{0}^{2n-h}\\
		\vdots\\
		-\beta_{n-1}^{2n-h}\\
		\delta_h
	\end{array}
	\middle)=\Fa_h^{-1}\FX^{-1}\middle(\begin{array}{c}
		-(2n-h)\\
		\vdots\\
		-1\\
		0\\
		1\\
		\vdots\\
		h
	\end{array}\right)
\end{equation*}

Let $\FX^{-1}=(y_{ij})$. Since $\FX$ is a Vandermonde matrix, we know that
	\begin{equation*}
	y_{ij}=\left\lbrace \begin{array}{cl}
		\dfrac{(-1)^{j-1}\mathlarger{\mathlarger{\sum}}_{\substack{1 \leq m_1 < \dots <m_{2n+1-j} \leq 2n+1\\ m_1, \dots,m_{2n+1-j} \neq i}}x_{m_1} \dots x_{m_{2n+1-j}}}{\mathlarger{\mathlarger{\prod}}_{1 \leq m \leq 2n+1, m \neq i}(x_m-x_i)} &, 1 \leq j <2n+1\\
		&\\
		\dfrac{1}{\mathlarger{\mathlarger{\prod}}_{1 \leq m \leq 2n+1, m \neq i}(x_m-x_i)}& ,j=2n+1.
	\end{array}\right.
\end{equation*}

Note that $y_{ij}$ can be regarded as the $z^{2n+1-j}$-coefficient of
\begin{equation*}
	\mathlarger{\prod}_{1 \leq m \leq 2n+1, m \neq i} \dfrac{(1-x_mz)}{(x_m-x_i)}.
\end{equation*}

Also, we have $\beta_i^h=\alpha_{i+1,h}^{-1}\mathlarger{\sum}_{1 \leq j \leq 2n+1}y_{(i+1)j}(j-(2n-h+1))$. Therefore,
\begin{equation*}
	\beta^h_i=-\alpha_{i+1,h}\dfrac{d}{dz}\mathlarger{\mathlarger{\Biggl(}}\dfrac{\mathlarger{\prod}_{1 \leq m \leq 2n+1, m \neq i+1} (1-x_mz)}{z^{h}\mathlarger{\prod}_{1 \leq m \leq 2n+1, m \neq i+1}(x_m-x_{i+1})}\mathlarger{\mathlarger{\Biggl)}}\mathlarger{\mathlarger{\Biggl|}}_{z=1}.
\end{equation*}

Since $x_{2n+1}=1$, we have that
\begin{equation*}
	\beta^h_i=\alpha_{i+1,h}^{-1}\mathlarger{\mathlarger{\Biggl(}}\dfrac{\mathlarger{\prod}_{1 \leq m \leq 2n, m \neq i+1} (1-x_m)}{\mathlarger{\prod}_{1 \leq m \leq 2n+1, m \neq i+1}(x_m-x_{i+1})}\mathlarger{\mathlarger{\Biggl)}}.
\end{equation*}

This finishes the proof of the proposition.
\end{proof}

Now, we can state \cite[Conjecture 3.17, Conjecture 3.26]{Cho2}.

\begin{conjecture}\label{conjecture3.17}
	(\cite[Conjecture 3.17, Conjecture 3.26]{Cho2}) For a basis $\lbrace x_1, \dots, x_{2n-m}, y_1, \dots, y_m \rbrace$ of $\BV$, and special cycles $\CZ(x_1),\dots,\CZ(x_{2n-m})$, and $\CY(y_1),\dots,\CY(y_m)$ in $\CN^n(1,2n-1)$, we have
	\begin{equation*}
		\begin{array}{l}
			\langle \CZ(x_1),\dots,\CZ(x_{2n-m}),\CY(y_1),\dots,\CY(y_m) \rangle\\:=\chi(O_{\CZ(x_1)}\otimes^{\BL}\dots\otimes^{\BL}O_{\CY(y_m)})\\
			=\dfrac{1}{W_{n,n}(A_n,0)}\lbrace W'_{m,n}(B,0)-\mathlarger{\sum}_{0 \leq i \leq n-1} \beta_i^mW_{m,i}(B,0)\rbrace.
			\end{array}
	\end{equation*}

Here $\chi$ is the Euler-Poincare characteristic and $\otimes^{\BL}$ is the derived tensor product. Also, $B$ is the matrix
	\begin{equation*}
	B=\left(\begin{array}{cc} 
		h(x_i,x_j) & h(x_i,y_l)\\
		h(y_k,x_j)& h(y_k,y_l)
	\end{array} 
	\right)_{1\leq i,j \leq 2n-m, 1\leq k,l \leq m}.
\end{equation*}
\end{conjecture}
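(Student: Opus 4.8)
The plan is to transpose Li and Zhang's proof of the hyperspecial Kudla--Rapoport conjecture \cite{LZ} to the minuscule parahoric setting, with Theorem~\ref{theorem4.7} of the present paper playing the role that \cite{CY} played there. First reduce to a clean case: by \cite[Proposition 5.11]{Cho} and \cite[Remark 2.7]{Cho2} it suffices to compute intersection numbers on $\CN^n(1,2n-1)$, and I would begin with $m=0$, the arithmetic intersection of the $2n$ Kudla--Rapoport divisors $\CZ(x_1),\dots,\CZ(x_{2n})$ on the regular $2n$-dimensional formal scheme $\CN^n(1,2n-1)$; the mixed $m>0$ case additionally requires the analogue of Theorem~\ref{theorem4.7} for the weighted densities $W'_{m,n}(B,0)$ (the ``future work'' mentioned in the introduction), after which one runs the same argument with the $\CY$-cycles, which by construction are $\CZ$-cycles on the dual space $\widehat{\CN}$ pulled back along $\theta$. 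Writing $L=L_x$ for the $O_E$-lattice spanned by $x_1,\dots,x_{2n}$, Theorem~\ref{theorem4.7} identifies the analytic side of Conjecture~\ref{conjecture3.17} with the quantity
\begin{equation*}
\partial^{\mathrm{par}}(L):=\sum_{\lambda\in\CR_{2n}^{0+}}D_\lambda\,\#\{L'\supseteq L:\ L'\in A_\lambda\}\;-\;\sum_{0\le i\le n-1}\Fb_i^0\,\#\{L'\supseteq L:\ L'\in A_{(1^i,0^{2n-i})}\},
\end{equation*}
a universal $\BZ$-linear combination of counts of overlattices of $L$ with prescribed fundamental invariants; the target identity is then $\langle\CZ(x_1),\dots,\CZ(x_{2n})\rangle=\partial^{\mathrm{par}}(L)$.

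I would prove this by the induction scheme of \cite{LZ}, in which both sides are regarded as functions of $L$. The preliminary steps are: (a) both sides vanish unless $L$ satisfies the relevant integrality condition --- for the geometric side because otherwise the divisors, or the local models, have no lifts; for $\partial^{\mathrm{par}}$ directly from the local density formulas of \cite[Section 3]{Cho2}; (b) when $B$ is nondegenerate the $2n$-fold intersection is proper, hence artinian, so $\chi$ of the derived tensor product is a genuine alternating sum of lengths. The central geometric tool is the $p$-adic uniformization of $\CN^n(1,2n-1)$ together with its Bruhat--Tits stratification into closed strata $\CN_\Lambda$ indexed by vertex lattices $\Lambda$ in $\BV$, each with an explicit (Deligne--Lusztig-type) description, and the fact that the restriction of $\CZ(x)$ to $\CN_\Lambda$ is governed by an integrality condition $x\in\Lambda^{\sharp}$, just as in \cite{KR2},\cite{LZ}. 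From this I would extract (i) a rank-reduction (``Kudla--Rapoport'') recursion expressing $\langle\CZ(x_1),\dots,\CZ(x_{2n})\rangle$ through intersection numbers of $2n-1$ divisors on a lower-rank space, up to correction terms supported on the lower-dimensional strata, and (ii) the base of the induction for $L$ of small rank.

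On the analytic side, Proposition~\ref{proposition2.7}, the linear independence of Section~\ref{subsection3.1}, and the local density identities of \cite[Section 3]{Cho2} should yield the \emph{same} rank-reduction recursion for $\partial^{\mathrm{par}}(L)$. With both recursions in hand, the identity follows once the base cases match; there one falls back on the already-proven hyperspecial formula $\langle\CZ(x_1),\dots,\CZ(x_n)\rangle=\alpha'(1_n,B)/\alpha(1_n,1_n)$ and on the explicit structure of the one-dimensional pieces of $\CN^n$, exactly as in the hyperspecial and ramified arguments of \cite{LZ},\cite{LL}.

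I expect the decisive obstacle to be the correction terms concentrated on the lower-dimensional Bruhat--Tits strata. For parahoric level these strata are not smooth and the $2n$-fold derived intersection is genuinely non-transverse along them, so one must compute alternating sums of lengths of fairly intricate local rings by Grothendieck--Messing deformation theory, and then prove that the aggregate correction is \emph{exactly} the combinatorially defined $\Fb_i^0$ of Section~\ref{subsection4.2}; this is the analogue of the most delicate special-fiber step in \cite{LZ} and may well require a new geometric input adapted to minuscule parahoric level rather than a direct transcription. A second, unavoidable difficulty for the full conjecture is the $m>0$ reformulation itself: one must first establish the analogue of Theorem~\ref{theorem4.7} for $W'_{m,n}$ with the appropriately ``modified'' lattices, and verify that $\theta$ is compatible with the Bruhat--Tits stratification so that the $\CY$-cycle contributions can be read off on $\widehat{\CN}$.
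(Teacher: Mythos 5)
The statement you are addressing is not proved in this paper at all: it is Conjecture \ref{conjecture3.17}, quoted verbatim from \cite[Conjecture 3.17, 3.26]{Cho2}, and the paper's contribution is only to \emph{reformulate} its right-hand side (in the case $m=0$) via Theorem \ref{theorem4.7}, not to establish the identity with the intersection number. So there is no paper proof to compare against, and your text should be judged as a research program for an open conjecture rather than as a proof. As such it has genuine gaps, which you yourself flag but which are not peripheral: they are the entire content of the conjecture. Concretely, (i) the rank-reduction recursion you want on the geometric side is not a transcription of \cite{LZ}, because at minuscule parahoric level the relevant difference identities are not even $GL_{2n}(O_E)$-invariant --- the paper's own closing remark for $n=1$ shows that $\CZ(y)-\CZ(y/\pi)$ decomposes into exceptional divisors $\BP_\Lambda$ attached to vertex lattices of \emph{both} types $0$ and $2$, with intersection numbers $1_{\Lambda}(x)$ and $-q\,1_{\Lambda}(x/\pi)$ respectively, and the author explicitly says that matching this with the lattice-counting side already seems to require Iwahori-invariant objects not constructed here; (ii) you would have to prove that the non-transverse contributions along the singular Bruhat--Tits strata aggregate to exactly the combinatorial terms $\Fb_i^0$ of Section \ref{subsection4.2}, and no mechanism for that computation is proposed beyond ``Grothendieck--Messing deformation theory''; (iii) the conjecture is stated for all $0\le m\le 2n$, while your scheme only addresses $m=0$ and defers the needed analogue of Theorem \ref{theorem4.7} for $W'_{m,n}$ (with modified, parahoric-invariant lattice data) to hypothetical future work. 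Until (i)--(iii) are supplied, the proposal is a plausible outline of how one might attack the conjecture, but it does not constitute a proof, and it should not be presented as one.
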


\section{Reformulations of conjectures on $\CN^0(1,n-1)$}\label{section3}
In this section, we will write the derivative of a certain representation density as a linear sum of representation densities. This can be regarded as a version of \cite[Corollary 3.16]{CY} (orthogonal case) and its unitary variant \cite[Theorem 3.5.1]{LZ}.

\subsection{Linear independence}\label{subsection3.1}

In this subsection, let $K_n=GL_n(O_E)$. Then, we know that the complete set of representatives of $K_n \backslash X_n(E)$ is given by the set of diagonal matrices $\diag(\pi^{\lambda_1},\dots,\pi^{\lambda_n})$ where $\lambda_1 \geq \dots \geq \lambda_n$, and $\lambda_i \in \BZ$.

Let $\CR_{n}$ be the set
\begin{equation*}
	\CR_{n}=\lbrace Y_{\sigma,e} \vert (\sigma,e) \in \CS_{n}\times \BZ^{n}, \sigma^2=1, e_i=e_{\sigma(i)} \forall i \rbrace,
\end{equation*}
where $\CS_{n}$ is the symmetric group of degree $n$. For $\sigma \in \CS_{n}$, let

\begin{displaymath}
	Y_{\sigma,e}=\sigma \left(\begin{array}{ccc} 
		\pi^{e_1} &  & 0\\
		& \ddots & \\
		0 &   &\pi^{e_{n}}
	\end{array} 
	\right).
\end{displaymath}

We denote by $\CR^0_n$ the set
\begin{equation*}
	\CR^0_n:=\lbrace A_{\lambda} \vert \lambda=(\lambda_1,\dots, \lambda_n), \lambda_i \in \BZ, \lambda_1 \geq \dots \geq \lambda_n \rbrace,
\end{equation*}
where $A_{\lambda}=\diag(\pi^{\lambda_1},\dots,\pi^{\lambda_n})$. Sometimes, we will abuse notation slightly by regarding $\CR^0_n$ as the set of $\lambda$'s.

Also, we write $\CR_n^{0+}$ for the set
\begin{equation*}
	\CR^{0+}_n:=\lbrace A_{\lambda} \vert \lambda=(\lambda_1,\dots, \lambda_n), \lambda_i \in \BZ, \lambda_1 \geq \dots \geq \lambda_n \geq 0 \rbrace,
\end{equation*}
and $\CR_n^{0k}$ for the set
\begin{equation*}
	\CR^{0k}_n:=\lbrace A_{\lambda} \vert \lambda=(\lambda_1,\dots, \lambda_n), \lambda_i \in \BZ, k \geq \lambda_1 \geq \dots \geq \lambda_n \geq 0 \rbrace.
\end{equation*}

Let us recall the definition of usual representation densities. For $A \in X_m(O_E)$ and $B \in X_n(O_E)$ we define $\alpha(A,B)$ by
\begin{equation*}
	\alpha(A,B):=\mathlarger{\lim}_{d\rightarrow \infty} (q^{-d})^{n(2m-n)}\vert \FA_d(A,B) \vert,
\end{equation*}
where $\FA_d(A,B)=\lbrace x \in M_{m,n}(O_E/\pi^dO_E) \vert A[x]\equiv B (\Mod \pi^d)\rbrace$.

For $A \in X_n(O_E)$ and $B \in X_n(O_E)$ and $X=(-q)^{-2r}$, where $m=n+2r$, we define
\begin{equation*}
	\alpha(A,B;X):=\mathlarger{\lim}_{d\rightarrow \infty} (q^{-d})^{n(2m-n)}\vert \FA_d(A^{[r]},B) \vert,
\end{equation*}
where
\begin{equation*}
	A^{[r]}=\left(\begin{array}{cc}
		A & \\
		 & 1_{2r} \end{array}\right).
\end{equation*}

Also, we define $\alpha'(A,B)$ as
\begin{equation*}
	\alpha'(A,B)=-\dfrac{d}{dX}\alpha(A,B;X)\vert_{X=1}.
\end{equation*}
Recall the following formula for these representation densities in \cite{Hir}.

\begin{proposition}\label{proposition3.1}
	For $A \in X_n(O_E)$ and $B \in X_n(O_E)$, we have
	\begin{equation*}
		\alpha(A,B)=\mathlarger{\sum}_{Y \in \Gamma_{n} \backslash X_{n}(E)} \dfrac{\CG(Y,B)\CF_0(Y,A)}{\alpha(Y;\Gamma_{n})}.
	\end{equation*}
Here we use the notation in Lemma \ref{lemma2.2} and 
	\begin{equation*}
	\CF_0(Y,A)=\int_{M_{n,n}(O_E)} \psi(\langle Y, A[X]\rangle)dX.
\end{equation*}
\end{proposition}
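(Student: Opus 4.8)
The plan is to recognize the asserted identity as the explicit hermitian local density formula of Hironaka \cite{Hir}, which one may simply cite. Short of that, I would derive it by transcribing the proof of Lemma~\ref{lemma2.2} into the degenerate situation $h=t=0$, $r=0$, with $n$ in place of $2n$: when $t=0$ the lattice $L_0$ is self-dual, so $1_{0,0}$ is the characteristic function of $M_{n,n}(O_E)$ and $W_{0,0}(B,0)$ becomes the very Fourier integral that computes $\alpha(A,B)$; the one point not literally covered by the lemma is that there $A_0=1_{2n}$ is fixed, whereas here $A$ is an arbitrary integral invertible hermitian matrix, so I have to re-run the computation in that generality.

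First I would prove the Fourier-inversion identity
\begin{equation*}
\alpha(A,B)=\int_{X_n(E)}\CF_0(Y,A)\,\psi\bigl(-\langle Y,B\rangle\bigr)\,dY,
\end{equation*}
where $X_n(E)$ carries the measure restricted from the Haar measure on $V_n(E)$ normalized by $\int_{V_n(O_E)}dY=1$. The input is that, $E/F$ being unramified, the $O_F$-lattice $V_n(O_E)\subset V_n(E)$ is self-dual for $(X,Y)\mapsto\psi(\Tr(XY))$, so that $\int_{\pi^{-d}V_n(O_E)}\psi(\langle Y,M\rangle)\,dY$ equals $q^{dn^2}$ when $M\in\pi^dV_n(O_E)$ and $0$ otherwise. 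Applying this with $M=A[x]-B$ rewrites the congruence defining $\FA_d(A,B)$ as a character sum; exchanging the finite sum over $x$ with the integral over $Y$, identifying $\sum_{x}\psi(\langle Y,A[x]\rangle)$ with a multiple of $\CF_0(Y,A)$, and letting $d\to\infty$ yields the displayed formula --- the singular hermitian matrices forming a null set, the integral is taken over $X_n(E)$ rather than all of $V_n(E)$.

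Next I would decompose $X_n(E)=\bigsqcup_{Y}\Gamma_n\cdot Y$ over a set of representatives, which may be chosen inside $\CR_n$. Two elementary observations finish the argument: the substitution $X\mapsto X\gamma$ shows $\CF_0(\gamma\cdot Y,A)=\CF_0(Y,A)$ for every $\gamma\in GL_n(O_E)$, and the cyclicity of the trace gives $\langle\gamma\cdot Y,B\rangle=\langle Y,{}^t\gamma^{*}B\gamma\rangle=\langle Y,B[\gamma]\rangle$, which identifies the character on the orbit through $Y$. Writing each orbit integral as an average over $\Gamma_n$ weighted by the volume of the stabilizer of $Y$, the constant $\CF_0(Y,A)$ pulls out, the $\Gamma_n$-average of $\psi(\langle Y,-B[\gamma]\rangle)$ is precisely $\CG(Y,B)$, and the normalization of the orbit measure contributes exactly the factor $\alpha(Y;\Gamma_n)$ in the denominator. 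I expect the genuine work to be not the algebra but the measure bookkeeping: matching all powers of $q$ so that the orbit-measure Jacobian comes out equal to $\alpha(Y;\Gamma_n)$, verifying the self-duality of $V_n(O_E)$ with the correct exponent, and justifying that the limits $d\to\infty$ may be taken termwise over the infinitely many $\Gamma_n$-orbits (a uniform-convergence estimate, exactly as in \cite{Hir} and in the proof of Lemma~\ref{lemma2.2}).
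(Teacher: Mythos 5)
Your primary plan is exactly what the paper does: Proposition \ref{proposition3.1} is stated there without proof, simply as a recollection of Hironaka's formula from \cite{Hir}, so citing \cite{Hir} is the paper's own route. Your fallback derivation (writing $\alpha(A,B)$ as the $W_{0,0}$-type double Fourier integral with general $A$ in place of $A_t^{[r]}$ and then decomposing over $\Gamma_n$-orbits, using the $GL_n(O_E)$-invariance of $\CF_0$ and the stabilizer-density factor $\alpha(Y;\Gamma_n)$) is precisely the standard argument behind that citation and behind Lemma \ref{lemma2.2}, and is correct as sketched.
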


As in the proof of \cite[Lemma 3.15]{Cho2} we can compute that for $\lambda=(\lambda_1,\dots,\lambda_n) \in \CR_n^{0+}$ and
\begin{displaymath}
	Y=\sigma \left(\begin{array}{ccc} 
		\pi^{e_1} &  & 0\\
		& \ddots & \\
		0 &   &\pi^{e_{n}}
	\end{array} 
	\right),
\end{displaymath}
\begin{equation*}
	\CF_0(Y,A_{\lambda})=\mathlarger{\prod}_{1 \leq i,j\leq n} (-q)^{\min(0,e_i+\lambda_j)}.
\end{equation*}

Now let us introduce some notation. Let \begin{equation*}
	f(Y)=\mathlarger{\prod}_i(-q)^{n\min(0,e_j)},
\end{equation*} 

and for an integer $k \geq 0$, 
\begin{equation*}
\CB_{k}(Y):=\mathlarger{\sum}_{i}\min(0,e_i+k)-\min(0,e_i).
\end{equation*} 

Then, we have
\begin{equation}\label{equation3.1.1}
	\CF_0(Y,A_{\lambda})=(-q)^{\sum_{i}\CB_{\lambda_i}(Y)}f(Y).
\end{equation}

Note that
\begin{equation*}
	\min(0,e+\lambda)-\min(0,e)=\left\lbrace \begin{array}{ll}
		\lambda & \text{ if } e \leq -\lambda;\\
		\lambda-1 & \text{ if } e =-(\lambda-1);\\
		\vdots& \\
		0 & \text{ if } e \geq 0.
		\end{array}\right.
\end{equation*}

From this, we can see that $\CF_0(Y,A_{\lambda})$ depends only on the tuples $(\# e_i \geq 0, \# e_i=-1, \# e_i=-2, \dots)$. Therefore, let us define

\begin{equation*}
	E_0(Y):=\# \lbrace e_i \vert e_i \geq 0 \rbrace,
\end{equation*}
and for $k \geq 1$,
\begin{equation*}
	E_k(Y):=\# \lbrace e_i \vert e_i=-k \rbrace.
\end{equation*}

Similarly, for $\eta \in \CR_n^{0+}$ and $k \geq 0$, we define
\begin{equation*}
	E_k(\eta):=\# \lbrace \eta_i \vert \eta_i=k \rbrace,
\end{equation*}

For each $\eta \in \CR_n^{0+}$, we define the subset $\FC_{\eta}$ of $\CR_n$ by

\begin{equation*}
	\FC_{\eta}:=\left\lbrace \begin{array}{l}Y \in \CR_n \end{array}\middle\vert \begin{array}{c}E_0(Y)=E_0(\eta)\\
		E_1(Y)=E_1(\eta)\\
		\vdots\\
	\end{array}\right\rbrace.
\end{equation*}

For $\alpha \in \CR_m^{0+}$ and $\eta \in \CR_n^{0+}$, we write $\CB_{\alpha}(\eta)$ for
\begin{equation*}
	\CB_{\alpha}(\eta)=\mathlarger{\sum}_{1 \leq i \leq m, 1 \leq j \leq n} \min(\alpha_i,\eta_j).
\end{equation*}
Then for $\alpha \in \CR_n^{0+}$, $\eta \in \CR_n^{0+}$, and $Y \in \FC_{\eta}$, we have
\begin{equation}\label{equation3.1.2}
	\CF_0(Y,A_{\alpha})=(-q)^{\CB_{\alpha}(\eta)}f(Y).
\end{equation}

Now, since $\CF_0(Y,A_{\lambda})$ depends only on $\eta \in \CR_n^{0+}$ such that $Y \in \FC_{\eta}$, we can see that $\CF_0(\cdot,A_{\lambda})$ as a function on $\CR_n^{0+}$. For $\lambda \in \CR_n^{0+}$, we can consider $\CF_0(\cdot,A_{\lambda})$ as a function on $\CR_n^{0+}$. Then, we can state the following lemma.

\begin{lemma}\label{lemma3.2}
	For any $k \geq 0$ and $\lambda_1, \dots, \lambda_m \in \CR_n^{0k}$ such that none of them are the same, the restrictions of functions $\CF_0(\cdot, A_{\lambda_i})$ to $\CR_n^{0k}$ are linearly independent.
\end{lemma}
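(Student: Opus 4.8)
I would prove the linear independence by setting up an explicit "triangularity" argument with respect to a suitable partial order on $\CR_n^{0k}$. By formula \eqref{equation3.1.2}, for $\eta \in \CR_n^{0+}$ and $Y \in \FC_\eta$ we have $\CF_0(Y, A_\alpha) = (-q)^{\CB_\alpha(\eta)} f(Y)$, so each function $\CF_0(\cdot, A_{\lambda_i})$, viewed as a function on $\CR_n^{0k}$ (i.e. as a function of $\eta$), is up to the common nonzero factor controlled by $f$ determined by the exponent function $\eta \mapsto \CB_{\lambda_i}(\eta) = \sum_{p,q} \min((\lambda_i)_p, \eta_q)$. Since $f(Y)$ only depends on which $\FC_\eta$ contains $Y$ and is nowhere zero, it suffices to show that the functions $\eta \mapsto (-q)^{\CB_{\lambda_i}(\eta)}$ on $\CR_n^{0k}$ are linearly independent. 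So the whole problem reduces to a statement about the matrix $\big( (-q)^{\CB_{\lambda_i}(\mu_j)} \big)$ as $\lambda_i$ ranges over the given set and $\mu_j$ ranges over all of $\CR_n^{0k}$.

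**Key steps.**
First I would record the reduction just described, noting carefully that $\CR_n^{0k}$ is a finite set (bounded entries, bounded length), so we are dealing with finite matrices and can argue by showing some square submatrix is invertible. Second, I would choose the evaluation points cleverly: for each $\lambda \in \CR_n^{0k}$ I want a distinguished $\mu = \mu(\lambda) \in \CR_n^{0k}$ — the natural guess is $\mu(\lambda) = \lambda$ itself, or the "complementary" partition obtained by replacing each $\lambda_p$ by $k - \lambda_p$ and reordering — such that the matrix $M_{ij} = (-q)^{\CB_{\lambda_i}(\mu(\lambda_j))}$ is, after reordering the $\lambda_i$'s by (say) $|\lambda| = \sum_p \lambda_p$ or by dominance, triangular with nonzero diagonal. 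Concretely, one checks that $\CB_\lambda(\mu) = \sum_{p,q}\min(\lambda_p, \mu_q)$ is maximized, among $\mu \in \CR_n^{0k}$ with a fixed "shape", exactly when $\mu$ matches $\lambda$ in an appropriate sense, and that the value of the exponent strictly separates the different $\lambda$'s. The monotonicity of $t \mapsto (-q)^t$ in absolute value ($|-q| = q > 1$) then turns a strict inequality on exponents into a genuine domination of one row over the others, giving invertibility by a standard "strictly dominant diagonal up to a permutation" argument. Third, I would assemble these pieces: nonvanishing of $f$, the triangular/dominant submatrix, and conclude linear independence of the original $\CF_0(\cdot, A_{\lambda_i})$.

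**Main obstacle.**
The routine part is the reduction to the exponent matrix; the real work is step two — finding the right evaluation points $\mu(\lambda)$ and proving the separation inequality for $\CB_\lambda(\mu) = \sum_{p,q}\min(\lambda_p,\mu_q)$. The function $\CB$ is symmetric in its two arguments and is a sum of $\min$'s, so one must understand precisely how $\sum_{p,q}\min(\lambda_p,\mu_q)$ varies with $\lambda$ for well-chosen fixed $\mu$'s; the cleanest route is probably to use the identity $\min(a,b) = \sum_{\ell \ge 1} \mathbf 1_{a \ge \ell}\mathbf 1_{b \ge \ell}$ to rewrite $\CB_\lambda(\mu) = \sum_{\ell=1}^{k} N_\ell(\lambda) N_\ell(\mu)$ where $N_\ell(\lambda) = \#\{p : \lambda_p \ge \ell\}$, so that (since $\lambda \in \CR_n^{0k}$ is determined by the nonincreasing tuple $(N_1(\lambda), \dots, N_k(\lambda))$) the matrix in question is essentially $\big( \exp\langle N(\lambda_i), N(\mu_j)\rangle \big)$ — a Gram-type / generalized Vandermonde matrix whose nonsingularity can be extracted either from a determinant computation or from the strict-domination argument after ordering the $\lambda_i$ lexicographically by $(N_1, N_2, \dots)$. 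I expect the bulk of the proof's length to be in verifying that this reindexed matrix has a dominant-diagonal ordering, which is where the hypothesis $\lambda_i \in \CR_n^{0k}$ (bounded by $k$) is essential.
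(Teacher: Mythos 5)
Your reduction is fine up to a point: factoring out the nowhere-vanishing function $f$, viewing everything on the finite set $\CR_n^{0k}$, and rewriting $\CB_\lambda(\eta)=\sum_{\ell=1}^{k}N_\ell(\lambda)N_\ell(\eta)$ with $N_\ell(\lambda)=\#\{p:\lambda_p\ge \ell\}$ are all correct, and they identify the heart of the lemma as the invertibility of the square matrix $M=\bigl((-q)^{\CB_\lambda(\eta)}\bigr)_{\lambda,\eta\in\CR_n^{0k}}$. But that invertibility is exactly what your proposal does not prove. Every entry of $M$ is a nonzero power of $-q$, so no literal triangularity is available, and the substitutes you invoke break down. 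First, the claimed strict separation is false: $\CB_\lambda(\mu)$ is not maximized at $\mu$ ``matching'' $\lambda$ among $\mu$ of fixed size --- already for $n=2$ one has $\CB_{(2,0)}\bigl((2,0)\bigr)=\CB_{(2,0)}\bigl((1,1)\bigr)=2$, so the row of $\lambda=(2,0)$ takes equal values at the two candidate ``diagonal'' columns. Second, since $\CB_\lambda(\eta)$ is monotone increasing in $\eta$, the entry of largest absolute value in \emph{every} row sits in the same column $\eta=(k,\dots,k)$, so no permutation of rows and columns yields a dominant diagonal; and because $q$ is a fixed residue cardinality (possibly $q=3$), the natural rescalings do not rescue dominance either (scaling row $\lambda$ and column $\eta$ by $q^{-\Vert N(\lambda)\Vert^2/2}$, $q^{-\Vert N(\eta)\Vert^2/2}$ gives off-diagonal absolute values $q^{-\frac12\Vert N(\lambda)-N(\eta)\Vert^2}$, whose row sums exceed $1$ for small $q$). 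The fallback ``determinant computation'' for $\bigl((-q)^{\langle N(\lambda),N(\eta)\rangle}\bigr)$ is not supplied, and note that the positive-definite-kernel argument that would handle $q^{\langle u,v\rangle}$ does not apply to base $-q$ because the sign $(-1)^{\langle u,v\rangle}$ is not of product form. So the proposal asserts, rather than proves, the one nontrivial fact.

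The paper gets around precisely this obstacle by not working with the value matrix directly: it pairs a putative relation $\sum_i c_{\alpha_i}\CF_0(\cdot,A_{\alpha_i})=0$ against $\CG(Y,B)/\alpha(Y;\Gamma_n)$, i.e.\ passes to representation densities $\alpha(A_{\alpha_i},B)$, where genuine zeros exist: the key observations are that $\CG(Y,B)=0$ for $B\in\CR_n^{0(k-1)}$ whenever $Y\notin\bigcup_{\eta\in\CR_n^{0k}}\FC_\eta$ (so the hypothesis on the restricted domain suffices), and that choosing $B=A_{\alpha_t}$ with $\alpha_t\in\CR_n^{0(k-1)}$ of minimal $\sum_i\alpha_{ti}$ makes $\alpha(A_{\alpha_j},B)=0$ for $j\neq t$, killing all coefficients indexed by $\CR_n^{0(k-1)}$. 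The remaining indices all have first entry $k$, and for these the paper proves the factorization $(\CF_0(Y_\eta,A_\alpha))_{\eta,\alpha}=(-q)^k\,{}^tX\,(\CF_0(Y_{\overline\eta},A_{\overline\alpha}))_{\overline\eta,\overline\alpha}\,X$ with $X$ invertible diagonal, reducing to rank $n-1$ and concluding by induction on $n$. To salvage your route you would need either an actual evaluation (or nonvanishing proof) of $\det\bigl((-q)^{\langle u,v\rangle}\bigr)$ over partitions in an $n\times k$ box, or some mechanism producing genuine vanishing as the paper's density trick does; the triangularity/dominance heuristics alone cannot close the argument.
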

\begin{proof}
	Assume that there are $\alpha_i \in \CR_n^{0k}$ and constants $c_{\alpha_i}$ such that
	\begin{equation*}
	 \mathlarger{\sum}_{i}c_{\alpha_i} \CF_0(Y, A_{\alpha_i})=0,\quad \forall Y \in \FC_{\eta}, \forall \eta \in \CR_n^{0k}.
	 \end{equation*}
 Then, we claim that
 \begin{equation*}
 	\mathlarger{\sum}_i c_{\alpha_i} \alpha(A_{\alpha_i},B)=0, \quad \forall B \in \CR_n^{0(k-1)}.
 \end{equation*}

By proposition \ref{proposition3.1}, we know that
\begin{equation*}
	\alpha(A_{\alpha_i},B)=\mathlarger{\sum}_{Y \in \Gamma_{n} \backslash X_{n}(E)} \dfrac{\CG(Y,B)\CF_0(Y,A_{\alpha_i})}{\alpha(Y;\Gamma_{n})}.
\end{equation*}

Also, in the proof of \cite[Lemma 3.8]{Cho2}, we know that for $B=\diag(\pi^{\lambda_1},\dots,\pi^{\lambda_{n}}) \in \CR_n^{0(k-1)}$ and
\begin{displaymath}
	Y=\sigma \left(\begin{array}{ccc} 
		\pi^{e_1} &  & 0\\
		& \ddots & \\
		0 &   &\pi^{e_{n}}
	\end{array} 
	\right),
\end{displaymath}
we have

\begin{equation*}
	\begin{array}{ll}
		\CG(Y,B)&=\mathlarger{\prod}_{k< j=\sigma(j)} \int_{O} \psi(\pi^{e_j+\lambda_k}\Nm(x))dx\\
		
		&\times \mathlarger{\prod}_{ k= j=\sigma(j)} \int_{O^{\times}} \psi(\pi^{e_j+\lambda_k}\Nm(x))dx\\
		&\times \mathlarger{\prod}_{k> j=\sigma(j)} \int_{\pi O} \psi(\pi^{e_j+\lambda_k}\Nm(x))dx\quad\quad\quad\quad\quad\quad\quad\quad\quad\quad
		
		\end{array}
	\end{equation*}
\begin{equation*}
	\begin{array}{ll}
\quad\quad\quad&\times \mathlarger{\prod}_{k<j, k<\sigma(j), j\neq \sigma(j)} (\int_{O \times O} \psi(\pi^{e_j+\lambda_k}\Tr(xy))dxdy)^{1/2}\\

		&\times \mathlarger{\prod}_{k=j, k<\sigma(j), j\neq \sigma(j)} (\int_{O^{\times} \times O} \psi(\pi^{e_j+\lambda_k}\Tr(xy))dxdy)^{1/2}\\
		
		&\times \mathlarger{\prod}_{k>j, k<\sigma(j), j\neq \sigma(j)} (\int_{\pi O \times O} \psi(\pi^{e_j+\lambda_k}\Tr(xy))dxdy)^{1/2}
		\end{array}
\end{equation*}
\begin{equation*}
	\begin{array}{ll}		
	\quad\quad\quad	&\times \mathlarger{\prod}_{k<j, k=\sigma(j), j\neq \sigma(j)} (\int_{O \times O^{\times}} \psi(\pi^{e_j+\lambda_k}\Tr(xy))dxdy)^{1/2}\\

		&\times \mathlarger{\prod}_{k>j, k=\sigma(j),j\neq \sigma(j)} (\int_{\pi O \times O^{\times}} \psi(\pi^{e_j+\lambda_k}\Tr(xy))dxdy)^{1/2}
				\end{array}
	\end{equation*}
\begin{equation*}
\begin{array}{ll}	
		
	\quad\quad\quad	&\times \mathlarger{\prod}_{k<j, k>\sigma(j), j\neq \sigma(j)} (\int_{O \times \pi O} \psi(\pi^{e_j+\lambda_k}\Tr(xy))dxdy)^{1/2}\\
		
		&\times \mathlarger{\prod}_{k=j, k>\sigma(j), j\neq \sigma(j)} (\int_{O^{\times} \times \pi O} \psi(\pi^{e_j+\lambda_k}\Tr(xy))dxdy)^{1/2}\\
		
		&\times \mathlarger{\prod}_{k>j, k>\sigma(j), j\neq \sigma(j)} (\int_{\pi O \times \pi O} \psi(\pi^{e_j+\lambda_k}\Tr(xy))dxdy)^{1/2}
		
	\end{array}
\end{equation*}

Note that
\begin{equation*}
	\int_{O^{\times}} \psi(\pi^e \Nm(x))dx=0, \quad \forall e<-1,
\end{equation*}
and
\begin{equation*}
	\int_O \psi(\pi^e \Tr(x))dx=0, \quad \forall e<0.
\end{equation*}
If $Y \notin \bigcup_{\eta \in \CR_n^{0(k)}} \FC_{\eta}$, then there is at least one $e_i$ such that $e_i+\lambda_k\leq -2$, $\forall k$, and hence $\CG(Y,B)=0$.

Therefore, in the formula 
\begin{equation*}
	\mathlarger{\sum}_i c_{\alpha_i}\alpha(A_{\alpha_i},B)=\mathlarger{\sum}_{Y \in \Gamma_{n} \backslash X_{n}(E)} \dfrac{\CG(Y,B)	\mathlarger{\sum}_i c_{\alpha_i}\CF_0(Y,A_{\alpha_i})}{\alpha(Y;\Gamma_{n})},
\end{equation*}

we have

\begin{equation*}
	\begin{array}{l}
			\mathlarger{\sum}_i c_{\alpha_i}\CF_0(Y,A_{\alpha_i})=0, \text{ if } Y \in \bigcup_{\eta \in \CR_n^{0(k)}} \FC_{\eta}\\
			\CG(Y,B)=0, \text{ if } Y \notin \bigcup_{\eta \in \CR_n^{0(k)}} \FC_{\eta}.
		\end{array}
\end{equation*}

This proves the above claim
 \begin{equation*}
	\mathlarger{\sum}_i c_{\alpha_i} \alpha(A_{\alpha_i},B)=0, \quad \forall B \in \CR_n^{0(k-1)}.
\end{equation*}

If there is $\alpha_t=(\alpha_{t1},\dots, \alpha_{tn})$ in $\CR_n^{0(k-1)}$ such that $c_{\alpha_t} \neq 0$, we can choose $\alpha_t$ that has the minimum sum $\alpha_{t1}+\dots+\alpha_{tn}$. For this $\alpha_t$, let $B=A_{\alpha_t}$. Then $\alpha(A_{\alpha_j},B)=0$ for all $j \neq t$ and $\alpha(A_{\alpha_t},B)\neq 0$. Since $\mathlarger{\sum}_i c_{\alpha_i} \alpha(A_{\alpha_i},B)=0$, we have that $c_{\alpha_t}=0$. This is a contradiction, and hence all $\alpha_i$ should be in $\CR_n^{0(k)}\backslash \CR_n^{0(k-1)}$.

Now, it suffices to show that $\CF_0(\cdot,A_{\alpha})$, $\alpha \in $ $\CR_n^{0(k)}\backslash \CR_n^{0(k-1)}$ are linearly independent on $\CR_n^{0(k)}\backslash \CR_n^{0(k-1)}$. Note that $\alpha \in \CR_n^{0(k)}\backslash \CR_n^{0(k-1)}$ can be written as $(k,\overline{\alpha})$ where $\overline{\alpha}$ is in $\CR_{n-1}^{0k}$.

We will show that the linear independence of $\CF_0(\cdot,A_{\alpha})$, $\alpha \in $ $\CR_n^{0(k)}\backslash \CR_n^{0(k-1)}$ as functions on $\CR_n^{0(k)}\backslash \CR_n^{0(k-1)}$ comes from the linear independence of $\CF_0(\cdot,A_{\overline{\alpha}})$, $\overline{\alpha} \in \CR_{n-1}^{0k}$. Then we can inductively show that the linear independence of $\CF_0(\cdot,A_{\alpha})$,  $\alpha \in $ $\CR_n^{0(k)}$ comes from the linear independence of $\CF_0(\cdot,A_{\alpha})$,  $\alpha \in $ $\CR_1^{0(k)}$ that is trivial.

From \eqref{equation3.1.1}, we have

\begin{equation*}
	\CF_0(Y,A_{\lambda})=(-q)^{\sum_{i}\CB_{\lambda_i}(Y)}f(Y),
\end{equation*}
where
\begin{equation*}
	\CB_{\lambda}(Y):=\mathlarger{\sum}_{i}\min(0,e_i+\lambda)-\min(0,e_i).
\end{equation*} 

Also, from \eqref{equation3.1.2}, we have that for $\alpha \in \CR_n^{0+}$, $\eta \in \CR_n^{0+}$, and $Y \in \FC_{\eta}$,
\begin{equation*}
	\CF_0(Y,A_{\alpha})=(-q)^{\CB_{\alpha}(\eta)}f(Y),
\end{equation*}
where

\begin{equation*}
	\CB_{\alpha}(\eta)=\mathlarger{\sum}_{1 \leq i \leq m, 1 \leq j \leq n} \min(\alpha_i,\eta_j).
\end{equation*}

For $\alpha, \eta \in \CR_n^{0(k)}\backslash \CR_n^{0(k-1)}$, we write $\overline{\alpha}$ and $\overline{\beta}$ for the elements in $\CR_{n-1}^{0k}$ such that
\begin{equation*}
	\begin{array}{l}
		\alpha=(k,\overline{\alpha})\\
		\beta=(k,\overline{\beta}).
		\end{array}
\end{equation*}
Then for $Y_{\eta} \in \FC_{\eta}$ and $\alpha$, we have
\begin{equation*}\begin{array}{l}
	\CF_0(Y_{\eta},A_{\alpha})=(-q)^{\CB_{\overline{\alpha}}(\overline{\eta})+\CB_{k}(k)+\CB_{k}(\overline{\eta})+\CB_{\overline{\alpha}}(k)}\\
	=\CF_0(Y_{\overline{\eta}},A_{\overline{\alpha}}) \times (-q)^{k+\mathlarger{\sum}_{1 \leq i \leq n-1} \overline{\eta}_i + \mathlarger{\sum}_{1 \leq i \leq n-1} \overline{\alpha}_i}\\
	
	\end{array}
\end{equation*}

Now, let us give the lexicographical order on the finite sets $\CR_n^{0k}\backslash \CR_n^{0(k-1)}$ and $\CR_{n-1}^{0k}$ (note that these are bijective) and regard $(\CF_0(Y_{\eta},A_{\alpha}))_{\eta, \alpha}$ as a matrix with respect to this order. Also, let $X$ be the matrix $(\diag((-q)^{\sum_i \overline{\alpha}_i}))_{\overline{\alpha}}.$

Then, we have that
\begin{equation*}
(\CF_0(Y_{\eta},A_{\alpha}))_{\eta, \alpha}=(-q)^k (^tX) (\CF_0(Y_{\overline{\eta}},A_{\overline{\alpha}}))_{\overline{\eta}, \overline{\alpha}}X.
\end{equation*}

Since $X$ is an invertible matrix, the matrix $(\CF_0(Y_{\eta},A_{\alpha}))_{\eta, \alpha}$, $\alpha,\eta \in \CR_n^{0k}\backslash \CR_n^{0(k-1)}$ is invertible if and only if the matrix $(\CF_0(Y_{\overline{\eta}},A_{\overline{\alpha}}))_{\overline{\eta},\overline{\alpha}}$, $\overline{\alpha}, \overline{\eta} \in \CR_{n-1}^{0k}$ is invertible.

Therefore, the linear independence of $\CF_0(\cdot,A_{\alpha})$, $\alpha \in $ $\CR_n^{0(k)}\backslash \CR_n^{0(k-1)}$ as functions on $\CR_n^{0(k)}\backslash \CR_n^{0(k-1)}$ comes from the linear independence of $\CF_0(\cdot,A_{\overline{\alpha}})$, $\overline{\alpha} \in \CR_{n-1}^{0k}$.

This finishes the proof of the lemma.

\end{proof}

\subsection{Reformulations of conjectures on $\CN^0(1,n-1)$}\label{subsection3.2}

In this subsection, we will write $\dfrac{\alpha'(1_n,B)}{\alpha(1_n,1_n)}$ as a sum $\mathlarger{\sum}_{\alpha \in \CR_n^{0+}}C_{\alpha} \dfrac{\alpha(A_{\alpha},B)}{\alpha(A_{\alpha},A_{\alpha})}$ for some constants $C_{\alpha}$.

Assume that we have a hermitian $O_E$-lattice $L=O_Ex_1 \oplus \dots O_Ex_n$ of rank $n$ with hermitian form $B$. For a fixed $\alpha \in \CR_n^{0+}$, let $X$ be a nonsingular element in $M_{n,n}(O_E)$ such that
\begin{equation*}
A_{\alpha}[X]=B.
\end{equation*}

Then, one can consider the $O_E$-lattice $LX^{-1}$. Since $A_{\alpha}=(^tX^*)^{-1}BX^{-1}$ the lattice $LX^{-1}$ has the hermitian form $A_{\alpha}$. Note that this lattice contains $L$.

Therefore, for a fixed lattice $L$ with hermitian form $B$, one can regard $\dfrac{\alpha(A_{\alpha},B)}{\alpha(A_{\alpha},A_{\alpha})}$ as counting of different lattices $L_{\alpha}$ (since $\alpha(A_{\alpha},A_{\alpha})$ gives the number of automorphisms) with hermitian form $A_{\alpha}$ containing $L$.

In this way, we can reformulate the unitary version of Cho-Yamauchi's formula \cite[Theorem 3.5.1]{LZ} as a linear sum
\begin{equation*}
	\mathlarger{\sum}_{\alpha \in \CR_n^{0+}} C_{\alpha} \dfrac{\alpha(A_{\alpha},B)}{\alpha(A_{\alpha},A_{\alpha})}.
\end{equation*}

We recall from Lemma \ref{lemma2.2} that
	\begin{equation*}
	W_{0,0}(B,r)=\mathlarger{\sum}_{Y \in \Gamma_{2n} \backslash X_{2n}(E)} \dfrac{\CG(Y,B)\CF_0(Y,A_0^{[r]})}{\alpha(Y;\Gamma_{2n})},
\end{equation*}
and
\begin{equation}\label{equation3.2.1}
	\alpha'(1_n,B)=W_{0,0}'(B,0)=\mathlarger{\sum}_{Y \in \Gamma_{2n} \backslash X_{2n}(E)} \dfrac{\CG(Y,B)\CF_0'(Y,A_0^{[0]})}{\alpha(Y;\Gamma_{2n})}.
\end{equation}

Note that $\CF_0'(Y,A_0^{[0]})$ depends only on $\eta \in \CR_n^{0+}$ such that $Y \in \FC_{\eta}$.

By Lemma \ref{lemma3.2}, we know that $\CF_0(\cdot,A_{\alpha})$, $\alpha \in \CR_n^{0k}$ are linearly independent as functions on $\CR_n^{0k}$ and the number of functions $\CF_0(\cdot,A_{\alpha})$ is $\vert \CR_n^{0k} \vert$. This means that any function on $\CR_n^{0k}$ can be written uniquely as a linear sum of $\CF_0(\cdot,A_{\alpha})$, $\alpha \in \CR_n^{0k}$. Furthermore, any convergent function on $\CR_n^{0+}$ can be written uniquely as a linear sum of $\CF_0(\cdot, A_{\alpha})$, $\alpha \in \CR_n^{0+}$.

Therefore, $\CF_0'(Y,A_0^{[0]})$ can be written uniquely as a linear sum of $\CF_0(\cdot, A_{\alpha})$, $\alpha \in \CR_n^{0+}$.

Assume that
\begin{equation}\label{equation3.2.2}
\CF_0'(Y,A_0^{[0]})=\mathlarger{\sum}_{\alpha} c_{\alpha} \CF_0(Y,A_{\alpha}).
\end{equation}

Then by Proposition \ref{proposition3.1}, \eqref{equation3.2.1}, and \eqref{equation3.2.2}, we have
\begin{equation*}
		\alpha'(1_n,B)=\mathlarger{\sum}_{\alpha} c_{\alpha} \alpha(A_{\alpha},B).
\end{equation*}

Therefore, there are unique constants $C_{\alpha}$, $\alpha \in \CR_n^{0+}$ such that
\begin{equation*}
	\dfrac{\alpha'(1_n,B)}{\alpha(1_n,1_n)}=\mathlarger{\sum}_{\alpha} C_{\alpha} \dfrac{\alpha(A_{\alpha},B)}{\alpha(A_{\alpha},A_{\alpha})}.
\end{equation*}

For $\alpha \in \CR_n^{0+}$ such that $\det(A_{\alpha})$ has an odd valuation, $C_{\alpha}$ was computed in \cite[Corollary 3.5.3]{LZ} as follows.
\begin{proposition}(\cite[Corollary 3.5.3]{LZ})\label{proposition3.3}
	For $\alpha=(\alpha_1, \dots, \alpha_n) \in \CR_n^{0+}$, we write $t(\alpha)$ for the number of nonzero $\alpha_i$'s.
	
	Then for $\alpha$ such that $\sum_i \alpha_i$ is odd, we have
	\begin{equation*}
		C_{\alpha}=\mathlarger{\prod}_{i=1}^{t(\alpha)-1}(1-(-q)^i).
	\end{equation*}

Here, $C_{\alpha}=1$ if $t(\alpha)=1$.
\end{proposition}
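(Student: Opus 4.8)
This is the computation of \cite[Corollary 3.5.3]{LZ}; the plan is to re-derive it inside the framework of \S\ref{subsection3.1}--\S\ref{subsection3.2}. By the discussion preceding the statement, the constants $C_\alpha$ are determined by the unique expansion
\begin{equation*}
\CF_0'(Y,A_0^{[0]})=\sum_{\alpha\in\CR_n^{0+}}c_\alpha\,\CF_0(Y,A_\alpha)
\end{equation*}
together with the normalization $C_\alpha=c_\alpha\,\alpha(A_\alpha,A_\alpha)/\alpha(1_n,1_n)$ read off from Proposition \ref{proposition3.1} and \eqref{equation3.2.1}--\eqref{equation3.2.2}. So there are three steps: (i) make $\CF_0'(Y,A_0^{[0]})$ explicit; (ii) solve the resulting triangular linear system for the $c_\alpha$ attached to $\alpha$ with $\det A_\alpha$ of odd valuation; (iii) evaluate the normalizing ratio $\alpha(A_\alpha,A_\alpha)/\alpha(1_n,1_n)$.

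For step (i) I would diagonalize $Y=\sigma\,\diag(\pi^{e_1},\dots,\pi^{e_n})$ and use, as recorded just before Lemma \ref{lemma3.2}, that $\CF_0$ factors through the coarse invariant $\eta\in\CR_n^{0+}$ with $Y\in\FC_\eta$. Factoring the defining integral of $\CF_0(Y,A_0^{[r]})$ over the columns of the matrix variable and evaluating the unramified local integral $\int_{O_E}\psi(\pi^{e}\Nm(x))\,dx=(-q)^{\min(0,e)}$ — the same evaluation that underlies the vanishing statements used in the proof of Lemma \ref{lemma3.2} — one finds $\CF_0(Y,A_0^{[r]})=f(Y)\,X^{\,|\eta|}$, where $X=(-q)^{-2r}$ and $|\eta|=\sum_i\eta_i=-\sum_i\min(0,e_i)$. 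Hence $\CF_0'(Y,A_0^{[0]})=-\,|\eta|\,f(Y)$, and by \eqref{equation3.1.2} the expansion above becomes the combinatorial identity
\begin{equation*}
-\,|\eta|=\sum_{\alpha\in\CR_n^{0+}}c_\alpha\,(-q)^{\CB_\alpha(\eta)}\quad\text{for all }\eta\in\CR_n^{0+},\qquad\CB_\alpha(\eta)=\sum_{i,j}\min(\alpha_i,\eta_j).
\end{equation*}

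For steps (ii)--(iii) I would solve this identity on each finite truncation $\CR_n^{0k}$ and pass to the limit, using the recursive factorization already present in the proof of Lemma \ref{lemma3.2}: on the top stratum $\CR_n^{0k}\setminus\CR_n^{0(k-1)}$, writing $\alpha=(k,\overline\alpha)$ and $\eta=(k,\overline\eta)$, one has $(-q)^{\CB_\alpha(\eta)}=(-q)^{\,k+|\overline\alpha|+|\overline\eta|}\,(-q)^{\CB_{\overline\alpha}(\overline\eta)}$, which reduces the rank-$n$ system to the rank-$(n-1)$ system and produces a recursion for the $c_\alpha$ in terms of rank-$(n-1)$ data. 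Running this recursion and then multiplying by the ratio $\alpha(A_\alpha,A_\alpha)/\alpha(1_n,1_n)$ from step (iii) — which one computes from Proposition \ref{proposition3.1} (equivalently from Hironaka's explicit formula, cf.\ \cite{Hir}) and in which the contribution of the unimodular ``tail'' $1_{n-t(\alpha)}$ of $A_\alpha$ largely cancels against $\alpha(1_n,1_n)$ — should collapse, in the odd-valuation case, to the telescoping product $\prod_{i=1}^{t(\alpha)-1}\bigl(1-(-q)^i\bigr)$; in particular both the dependence on $n$ and the dependence on the number of zeros drop out.

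The main obstacle is precisely this last collapse: a priori $c_\alpha$ (hence $C_\alpha$) could depend on all of $n$, of $t(\alpha)$, and of the parity of $|\alpha|$, and the crux is to show that in the odd case the $n$-dependence and the zero-dependence are carried entirely by the normalizing ratio. The conceptually cleanest way to make this transparent — and the route I would ultimately write up — is the lattice-counting reformulation of \S\ref{subsection3.2} and \cite{CY}, \cite{LZ}: one reads $\sum_\alpha C_\alpha\,\alpha(A_\alpha,B)/\alpha(A_\alpha,A_\alpha)$ as a weighted count of over-lattices $L'\supseteq L_0$, with $L_0$ the lattice of Gram matrix $B$, organized by the isometry type of $L'$; for $L'$ of odd valuation one splits off a unimodular orthogonal direct summand, so that the weight of $L'$ reduces to a genuinely rank-$t(\alpha)$ quantity in which the split summand is invisible, and the product $\prod_{i=1}^{t(\alpha)-1}(1-(-q)^i)$ emerges from that rank-$t(\alpha)$ computation. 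The restriction to odd valuation is not incidental: it reflects the same parity dichotomy that makes $\int_{O_E^\times}\psi(\pi^{-1}\Nm(x))\,dx$ nonzero only for odd exponents, which is exactly why the even case yields constants that are merely rational functions of $q$ and must be handled separately.
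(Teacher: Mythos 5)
Your overall strategy is the right one: the cited \cite[Corollary 3.5.3]{LZ} is obtained by differentiating the lattice expansion $\frac{\alpha(1_n,B;X)}{\alpha(1_n,1_n;X)}=\sum_{L\subset L'\subset L'^{\vee}}X^{2l(L'/L)}\,m(t(L');X)$ at $X=1$, which is exactly the mechanism the paper itself uses for the even-valuation companion (Proposition \ref{proposition3.4}). Your step (i), the reduction to the identity $-|\eta|=\sum_{\alpha}c_{\alpha}(-q)^{\CB_{\alpha}(\eta)}$, and the normalization $C_{\alpha}=c_{\alpha}\,\alpha(A_{\alpha},A_{\alpha})/\alpha(1_n,1_n)$ are all correct.

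However, neither of your two routes is actually carried out, and the mechanism you describe for the odd case is not the right one. The weight $m(t(L');X)=\prod_{i=0}^{t(L')-1}(1-(-q)^iX)$ in the Li--Zhang formula \emph{already} depends only on $t(L')$; no splitting off of a unimodular summand is needed, and the product does not ``emerge from a rank-$t(\alpha)$ computation'' still to be done --- it is literally $-\frac{d}{dX}m(t(\alpha);X)\vert_{X=1}=\prod_{i=1}^{t(\alpha)-1}(1-(-q)^i)$. What must be verified in the odd case, and what your plan never addresses, is why all other terms produced by the differentiation vanish: (i) when $\val(L)$ is odd, every over-lattice $L'$ satisfies $t(L')\geq 1$ (no self-dual over-lattices exist, since $\val(L')=\val(L)-2l(L'/L)$ stays odd), so $m(t(L');1)=0$ and the terms $-2l(L'/L)\,m(t(L');1)$ coming from differentiating $X^{2l(L'/L)}$ drop out; (ii) $\alpha(1_n,B)=0$ when $\val(\det B)$ is odd, so the quotient-rule term $\frac{\alpha(1_n,B)}{\alpha(1_n,1_n)^2}\alpha'(1_n,1_n)$ on the left-hand side drops out as well. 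Granting (i) and (ii), $C_{\alpha}=m(t(\alpha))$ follows at once, with no residual dependence on $n$ or on the number of zeros to ``collapse''. Your stated explanation of the parity restriction --- a parity dichotomy for $\int_{O_E^{\times}}\psi(\pi^{-1}\Nm(x))\,dx$ --- is incorrect (that integral is nonzero, and the relevant vanishing $\int_{O^{\times}}\psi(\pi^{e}\Nm(x))\,dx=0$ holds for all $e\leq -2$, with no parity involved); the true source of the dichotomy is (i)--(ii). Likewise, your first route via the system $-|\eta|=\sum_{\alpha}c_{\alpha}(-q)^{\CB_{\alpha}(\eta)}$ is set up correctly but is abandoned exactly at the decisive step (``should collapse''), so as written the proposal identifies the correct reference point but does not constitute a proof.
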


Now, we need to find $C_{\alpha}$ for $\alpha$ with even $\sum_i \alpha_i$.

\begin{proposition}\label{proposition3.4}
		For $\alpha=(\alpha_1, \dots, \alpha_n) \in \CR_n^{0+}$, we write $t(\alpha)$ for the number of nonzero $\alpha_i$'s.
	
	Then for $\alpha \neq (0,0,\dots, 0)$ such that $\sum_i \alpha_i$ is even, we have
	\begin{equation*}
		C_{\alpha}=-\mathlarger{\prod}_{i=1}^{t(\alpha)-1}(1-(-q)^i),
	\end{equation*}
and for $\alpha=(0,0,\dots,0)$, we have
\begin{equation*}
	C_{(0,0,\dots,0)}=\dfrac{\alpha'(1_n,1_n)}{\alpha(1_n,1_n)}.
\end{equation*}
Here, $C_{\alpha}=-1$ if $t(\alpha)=1$
\end{proposition}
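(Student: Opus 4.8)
The plan is to reduce the statement to a single explicit linear system, dispose of the exceptional term $\alpha=(0,\dots,0)$ directly, and then solve the system — reusing, for the odd-determinant part, the computation behind \cite[Corollary 3.5.3]{LZ}. The first step is to make the left side of \eqref{equation3.2.2} completely explicit. The same computation that produces the formula for $\CF_0(Y,A_\lambda)$ in Section \ref{subsection3.1}, applied to $A_0^{[r]}=1_{n+2r}$, gives for $Y\in\FC_\eta$ and $X=(-q)^{-2r}$ the identity $\CF_0(Y,A_0^{[r]})=f(Y)\,X^{|\eta|}$ with $|\eta|:=\sum_i\eta_i$, hence $\CF_0'(Y,A_0^{[0]})=-|\eta|\,f(Y)$. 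Substituting this together with $\CF_0(Y,A_\alpha)=(-q)^{\CB_\alpha(\eta)}f(Y)$ from \eqref{equation3.1.2} into \eqref{equation3.2.2} and cancelling the nonzero factor $f(Y)$, the coefficients $c_\alpha$ — unique by Lemma \ref{lemma3.2} — are exactly the unique solution of
\begin{equation*}
\sum_{\alpha\in\CR_n^{0+}} c_\alpha\,(-q)^{\CB_\alpha(\eta)}=-|\eta|\qquad\text{for all }\eta\in\CR_n^{0+},
\end{equation*}
while $C_\alpha=c_\alpha\,\alpha(A_\alpha,A_\alpha)/\alpha(1_n,1_n)$ by Proposition \ref{proposition3.1}. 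So the proposition reduces to solving this system and translating back.

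The term $\alpha=(0,\dots,0)$ requires no computation: specialize the identity $\alpha'(1_n,B)/\alpha(1_n,1_n)=\sum_\beta C_\beta\,\alpha(A_\beta,B)/\alpha(A_\beta,A_\beta)$ at $B=1_n$. For $\beta\neq(0,\dots,0)$ the determinant of $A_\beta$ is a non-unit, so comparing determinants shows that $A_\beta[X]\equiv 1_n$ has no solution modulo $\pi^d$ for large $d$, whence $\alpha(A_\beta,1_n)=0$; only the term $\beta=(0,\dots,0)$ survives on the right and equals $C_{(0,\dots,0)}$, while the left side is $\alpha'(1_n,1_n)/\alpha(1_n,1_n)$ by definition.

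For $\alpha\neq(0,\dots,0)$ the system can be solved recursively in the determinant valuation $|\alpha|$: testing $\alpha'(1_n,B)=\sum_\alpha c_\alpha\,\alpha(A_\alpha,B)$ at $B=A_\gamma$, and using that $\alpha(A_\alpha,A_\gamma)$ vanishes unless $|\alpha|\le|\gamma|$ and that the term $|\alpha|=|\gamma|$ forces $\alpha=\gamma$, one obtains $c_\gamma$ from the $c_\alpha$ with $|\alpha|<|\gamma|$ together with Hironaka's explicit formulas for $\alpha'(1_n,A_\gamma)$ and $\alpha(A_\alpha,A_\gamma)$; equivalently one can run the generating-function argument of \cite[Corollary 3.5.3]{LZ}, expanding $\min(\alpha_i,\eta_j)=\sum_{k\ge 1}\mathbf 1[\alpha_i\ge k]\mathbf 1[\eta_j\ge k]$ and organizing the sum over the conjugate partition of $\alpha$. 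Either way the outcome should be $C_\alpha=(-1)^{|\alpha|+1}\prod_{i=1}^{t(\alpha)-1}(1-(-q)^i)$ for $\alpha\neq(0,\dots,0)$, whose odd-$|\alpha|$ half is precisely Proposition \ref{proposition3.3} and whose even-$|\alpha|$ half is the assertion to be proved. The decisive structural fact is that the magnitude $\prod_{i=1}^{t(\alpha)-1}(1-(-q)^i)$ does not depend on the parity of $|\alpha|$, so the entire parity dependence is carried by the base case $t(\alpha)=1$, where a one-variable Gauss-sum evaluation gives $C_{(a,0,\dots,0)}=1$ for odd $a$ and $-1$ for even $a\ge 1$. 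The main obstacle is exactly this last step — propagating the self-densities $\alpha(A_\alpha,A_\alpha)$ correctly through the recursion (or the generating function) while keeping track of the sign $(-1)^{|\alpha|+1}$; by contrast the reduction in the first paragraph and the vanishing argument for $\alpha=(0,\dots,0)$ are formal.
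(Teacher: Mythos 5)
Your first paragraph and your treatment of $\alpha=(0,\dots,0)$ are correct: for $Y\in\FC_\eta$ one does have $\CF_0(Y,A_0^{[r]})=f(Y)X^{|\eta|}$, so the coefficients solve the triangular system you write down, and specializing the defining identity at $B=1_n$ (where $\alpha(A_\beta,1_n)=0$ for $\beta\neq(0,\dots,0)$ by the determinant obstruction) immediately gives $C_{(0,\dots,0)}=\alpha'(1_n,1_n)/\alpha(1_n,1_n)$; that step is if anything more direct than the paper's. But the heart of the proposition --- the value $C_\alpha=-\prod_{i=1}^{t(\alpha)-1}(1-(-q)^i)$ for $\alpha\neq(0,\dots,0)$ with $\sum_i\alpha_i$ even --- is not actually proved. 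You set up the recursion over $|\gamma|$ (which is indeed well posed, since $\alpha(A_\alpha,A_\gamma)=0$ unless $|\alpha|\le|\gamma|$ with equality forcing $\alpha=\gamma$), and then state that the outcome ``should be'' $(-1)^{|\alpha|+1}\prod_{i=1}^{t(\alpha)-1}(1-(-q)^i)$, with the ``decisive structural fact'' that the parity dependence is carried entirely by the $t(\alpha)=1$ base case asserted rather than established. Nothing in the recursion as described decouples that way, and carrying it out with Hironaka's explicit densities is exactly the computation you have not done; this is a genuine gap, not a routine verification.

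Moreover, the alternative you propose --- rerunning the generating-function argument behind \cite[Corollary 3.5.3]{LZ} --- does not transfer to even valuation without a new input. Differentiating the lattice-counting identity \eqref{equation3.2.3.4.1} at $X=1$ when $\val(L)$ is even produces, besides $\sum_{t(L')\neq 0}m(t(L'))$, the contributions of the $t(L')=0$ lattices and the nonvanishing term $\tfrac{\alpha(1_n,B)}{\alpha(1_n,1_n)^2}\alpha'(1_n,1_n)$, leaving a stray summand $-\val(L)\tfrac{\alpha(1_n,B)}{\alpha(1_n,1_n)}$ which depends on $B$ and is not yet of the required shape. The paper removes it by differentiating the functional equation \eqref{equation3.2.3.4.2}, which for even $\val(L)$ identifies exactly the combination $\tfrac{\alpha'(1_n,B)}{\alpha(1_n,1_n)}-\tfrac{\alpha(1_n,B)}{\alpha(1_n,1_n)^2}\alpha'(1_n,1_n)$ with $-\tfrac12\val(L)\tfrac{\alpha(1_n,B)}{\alpha(1_n,1_n)}$; this functional equation is the decisive ingredient, and your outline contains no substitute for it. To complete your argument you must either carry out the Hironaka-based recursion explicitly (including the sign bookkeeping you flag as the main obstacle) or import the functional equation as the paper does.
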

\begin{proof}
	In the proof of this proposition, we will use \cite[Theorem 3.5.1]{LZ} and the functional equation \cite[(3.2.0.2)]{LZ} (\cite[Theorem 5.3]{Hir3}). Let us write these in our notation. Let $L$ be a lattice whose hermitian form is $B$ and $\det B$ is even. Then, \cite[Theorem 3.5.1]{LZ} can be written as
	\begin{equation}\label{equation3.2.3.4.1}
		\dfrac{\alpha(1_n,B;X)}{\alpha(1_n,1_n;X)}=\mathlarger{\sum}_{L \subset L' \subset L'^{\vee}} X^{2l(L'/L)}m(t(L');X).
	\end{equation}
where $l(L'/L)=\text{length}_{O_E}L'/L$,
\begin{equation*}
	m(a;X):=\mathlarger{\prod}_{i=0}^{a-1}(1-(-q)^iX), \forall \text{ } a \geq 1,
\end{equation*}
and $m(0;X):=1$.
Also, we define $t(L')$ as the number of nonzero $a_i$'s and $\val(L')=\sum a_i$, where $L'^{\vee}/L'\simeq O_E/\pi^{a_i}$ as $O_E$-modules.

We used the notation $m(a)$ for
\begin{equation*}
	m(a)=-\dfrac{d}{dX} m(a;X)=\mathlarger{\prod}_{i=1}^{a-1} (1-(-q)^i), \forall \text{ } a \geq 2,
\end{equation*}
and $m(0):=0, m(1):=1$.

The functional equation \cite[(3.2.0.2)]{LZ} can be written as
\begin{equation}\label{equation3.2.3.4.2}
	\dfrac{\alpha(1_n,B;X)}{\alpha(1_n,1_n;X)}=(-X)^{\val (L)} \dfrac{\alpha(1_n,B;1/X)}{\alpha(1_n,1_n;1/X)}
\end{equation}

From this, we have that
\begin{equation*}
	-\dfrac{d}{dX}\lbrace\dfrac{\alpha(1_n,B;X)}{\alpha(1_n,1_n;X)}\rbrace\vert_{X=1}=-\dfrac{d}{dX}\lbrace(-X)^{\val (L)} \dfrac{\alpha(1_n,B;1/X)}{\alpha(1_n,1_n;1/X)}\rbrace\vert_{X=1}
\end{equation*}
and hence, we have
\begin{equation*}\begin{array}{ll}
	\dfrac{\alpha'(1_n,B)}{\alpha(1_n,1_n)}-\dfrac{\alpha(1_n,B)}{\alpha(1_n,1_n)^2}\alpha'(1_n,1_n)&=(-1)^{\val(L)+1}\val(L)\dfrac{\alpha(1_n,B)}{\alpha(1_n,1_n)}\\&+(-1)^{\val(L)}\dfrac{-\alpha'(1_n,B)}{\alpha(1_n,1_n)}\\&
	+(-1)^{\val(L)}\dfrac{\alpha(1_n,B)}{\alpha(1_n,1_n)^2}\alpha'(1_n,1_n).
\end{array}	
\end{equation*}

Since $\val(L)$ is even, we have that
\begin{equation}\label{equation3.2.3.4.3}
		2\dfrac{\alpha'(1_n,B)}{\alpha(1_n,1_n)}-2\dfrac{\alpha(1_n,B)}{\alpha(1_n,1_n)^2}\alpha'(1_n,1_n)=-\val(L)\dfrac{\alpha(1_n,B)}{\alpha(1_n,1_n)}.
\end{equation}

On the other hand, from \eqref{equation3.2.3.4.1}, we have
\begin{equation*}\begin{array}{l}
	\dfrac{\alpha'(1_n,B)}{\alpha(1_n,1_n)}-\dfrac{\alpha(1_n,B)}{\alpha(1_n,1_n)^2}\alpha'(1_n,1_n)=-\dfrac{d}{dX}\mathlarger{\sum}_{L \subset L' \subset L'^{\vee}} X^{2l(L'/L)}m(t(L');X)\vert_{X=1}.
	\end{array}
\end{equation*}

Note that $-\dfrac{d}{dX} m(t(L');X)\vert_{X=1}=m(t(L'))$ and $m(t(L'),1)=0$, if $t(L')\geq 1$. Therefore, we have

\begin{equation*}\begin{array}{ll}
		\dfrac{\alpha'(1_n,B)}{\alpha(1_n,1_n)}-\dfrac{\alpha(1_n,B)}{\alpha(1_n,1_n)^2}\alpha'(1_n,1_n)&=\mathlarger{\sum}_{\substack{L \subset L' \subset L'^{\vee}\\ t(L') \neq 0}} m(t(L'))\\
		&+\mathlarger{\sum}_{\substack{L \subset L' \subset L'^{\vee}\\ t(L')=0}}
		-2l(L'/L).
	\end{array}
\end{equation*}

Note that $t(L')=0$ means that $L'$ has a hermitian matrix $1_n$ and \begin{equation*}
	2l(L'/L)=\val(L).
	\end{equation*}
Therefore, we have
\begin{equation*}\begin{array}{ll}
		\dfrac{\alpha'(1_n,B)}{\alpha(1_n,1_n)}-\dfrac{\alpha(1_n,B)}{\alpha(1_n,1_n)^2}\alpha'(1_n,1_n)&=\mathlarger{\sum}_{\substack{L \subset L' \subset L'^{\vee}\\ t(L') \neq 0}} m(t(L'))\\
		&+\mathlarger{\sum}_{\substack{L \subset L' \subset L'^{\vee}\\ t(L')=0}}
		-\val(L)\\
		&=\mathlarger{\sum}_{\substack{L \subset L' \subset L'^{\vee}\\ t(L') \neq 0}} m(t(L'))\\
		&-\val(L)\dfrac{\alpha(1_n,B)}{\alpha(1_n,1_n)}.
	\end{array}
\end{equation*}

Combining this with \eqref{equation3.2.3.4.3}, we have
\begin{equation*}\begin{array}{ll}
		\dfrac{\alpha'(1_n,B)}{\alpha(1_n,1_n)}-\dfrac{\alpha(1_n,B)}{\alpha(1_n,1_n)^2}\alpha'(1_n,1_n)&=-\mathlarger{\sum}_{\substack{L \subset L' \subset L'^{\vee}\\ t(L') \neq 0}} m(t(L'))\\
	\end{array}
\end{equation*}
and hence
\begin{equation*}\begin{array}{ll}
		\dfrac{\alpha'(1_n,B)}{\alpha(1_n,1_n)}&=\dfrac{\alpha(1_n,B)}{\alpha(1_n,1_n)^2}\alpha'(1_n,1_n)-\mathlarger{\sum}_{\substack{L \subset L' \subset L'^{\vee}\\ t(L') \neq 0}} m(t(L'))\\
		&=\dfrac{\alpha(1_n,B)}{\alpha(1_n,1_n)^2}\alpha'(1_n,1_n)-\mathlarger{\sum}_{\substack{\alpha \\ \sum \alpha_i=\text{even} \\t(\alpha)\neq 0}} m(t(\alpha))\dfrac{\alpha(A_{\alpha},B)}{\alpha(A_{\alpha},A_{\alpha})}.
	\end{array}
\end{equation*}

This proves the proposition.
\end{proof}
\begin{remark}
 If $\det B$ has an odd valuation, $\alpha(A_{\alpha},B)=0$ for $\alpha$'s with even $\sum_i \alpha_i$. Therefore, we may not need $C_{\alpha}$ for these $\alpha$'s in the case of hyperspecial level structure $\CN^0(1,n-1)$ since the determinant of the hermitian matrix $B$ of special homomorphisms has an odd valuation. But, in general in $\CN^n(1,2n-1)$, $\det B$ can have an even valuation.
	
\end{remark}

\section{Reformulations of conjectures on $\CN^n(1,2n-1)$}\label{section4}
In this section, we consider the arithmetic intersection numbers of $\CZ(x_1)$, $\CZ(x_2),$$\dots$, $\CZ(x_{2n})$ in $\CN^n(1,2n-1)$. In Conjecture \ref{conjecture3.17}, we conjecture that
	\begin{equation*}
	\begin{array}{l}
		\langle \CZ(x_1),\dots,\CZ(x_{2n}) \rangle
		=\dfrac{1}{W_{n,n}(A_n,0)}\lbrace W'_{0,n}(B,0)-\mathlarger{\sum}_{0 \leq i \leq n-1} \beta_i^0W_{0,i}(B,0)\rbrace.
	\end{array}
\end{equation*}
We will express this formula in terms of weighted counting of lattices containing $x_1, \dots, x_{2n}$.

\subsection{A Reformulation of $W'_{0,n}(B,0)$}\label{subsection4.1}
In this subsection, we will write $\dfrac{W'_{0,n}(B,0)}{W_{n,n}(A_n,0)}$ as a sum $\mathlarger{\sum}_{\alpha \in \CR_n^{0+}}D_{\alpha} \dfrac{\alpha(A_{\alpha},B)}{\alpha(A_{\alpha},A_{\alpha})}$ for some constants $D_{\alpha}$. Recall from Lemma \ref{lemma2.2} and Proposition \ref{proposition3.1} that
for $B \in X_{2n}(E)$ and $\lambda \in \CR_{2n}^{0+}$, we have
\begin{equation*}
	W_{0,n}'(B,0)=\mathlarger{\sum}_{Y \in \Gamma_{2n} \backslash X_{2n}(E)} \dfrac{\CG(Y,B)\CF_0'(Y,A_n^{[0]})}{\alpha(Y;\Gamma_{2n})},
\end{equation*}
and
	\begin{equation*}
	\alpha(A_{\lambda},B)=\mathlarger{\sum}_{Y \in \Gamma_{2n} \backslash X_{2n}(E)} \dfrac{\CG(Y,B)\CF_0(Y,A_{\lambda})}{\alpha(Y;\Gamma_{2n})}.
\end{equation*}

As in Section \ref{subsection3.2}, we need to express $\CF_0'(Y,A_n^{[0]})$ as a linear sum of $\CF_0(Y,A_{\lambda})$, for $\lambda \in \CR^{0+}_{2n}$. To find this expression, we will use Proposition \ref{proposition3.3} and Proposition \ref{proposition3.4}.

As in the proof of \cite[Lemma 3.15]{Cho2} we can compute that for \begin{displaymath}
	Y=\sigma \left(\begin{array}{ccc} 
		\pi^{e_1} &  & 0\\
		& \ddots & \\
		0 &   &\pi^{e_{2n}}
	\end{array} 
	\right),
\end{displaymath}
\begin{equation*}
	\CF_0'(Y,A_n^{[0]})=(\mathlarger{\sum}_{j}\min(0,e_j))(-q)^{-4n^2}(-q)^{n\CB_1(Y)}f(Y),
\end{equation*}
and
\begin{equation*}
	\CF_0'(Y,A_0^{[0]})=(\mathlarger{\sum}_{j}\min(0,e_j))f(Y).
\end{equation*}

Note that if we consider $\CF_0'(Y,A_n^{[0]})-(-q)^{-2n^2}\CF_0'(Y,A_0^{[0]})$, we have
\begin{equation}\label{eq.4.1.0}
	\begin{array}{l}
	\CF_0'(Y,A_n^{[0]})-\dfrac{\CF_0'(Y,A_0^{[0]})}{(-q)^{2n^2}}\\
	=\left\lbrace \begin{array}{cl} 
		0 & \text{if } Y \in \FC_{\eta},\\
		&E_0(\eta)=0\\\\
	 (\mathlarger{\sum}_{j}\min(0,e_j))f(Y)((-q)^{n(2n-1)-4n^2}-(-q)^{-2n^2})& \text{ if } Y \in \FC_{\eta},
	 \\& E_0(\eta)=1\\
	 \quad\quad\quad\quad\vdots\\
	 (\mathlarger{\sum}_{j}\min(0,e_j))f(Y)((-q)^{n(2n-k)-4n^2}-(-q)^{-2n^2})& \text{ if } Y \in \FC_{\eta},\\
	 & E_0(\eta)=k\\
	 \quad\quad\quad\quad\vdots\\
	 (\mathlarger{\sum}_{j}\min(0,e_j))f(Y)((-q)^{-4n^2}-(-q)^{-2n^2})& \text{ if } Y \in \FC_{\eta}, \\
	 &E_0(\eta)=2n

		\end{array}\right.
		\end{array}
\end{equation}

Since this depends only on $E_0(\eta)$, let us define
\begin{equation*}
	\FD_k:=\lbrace Y \in \FC_{\eta}\vert \eta \in \CR_{2n}^{0+}, E_0(\eta)=k\rbrace.
	\end{equation*}
Also, we will abuse notation slightly by regarding $\FD_k$ as the set of
elements $\eta$ in $\CR_{2n}^{0+}$ such that  $E_0(\eta)=k$.

Now, let us consider $n=1$ case. We have the following proposition.
\begin{proposition}\label{proposition4.1}
	For $\lambda=(\overline{\lambda},0) \in \FD_1 \cup \FD_2 \subset \CR_2^{0+}$, we define
	\begin{equation*}
		\lambda_1^+:=(\overline{\lambda},1) \in \CR_2^{0+}.
	\end{equation*}
	Here, for $\lambda=(0,0)$, we assume $\lambda_1^+=(1,0) \in \CR_2^{0+}$ after changing the order. Then, we have
	\begin{equation*}
		\begin{array}{l}
	\CF_0'(Y,A_1^{[0]})-\dfrac{\CF_0'(Y,A_0^{[0]})}{(-q)^{2}}\\\\
	
	=\mathlarger{\sum}_{\lambda \in \FD_1 \cup \FD_2} C_{\overline{\lambda}}\dfrac{\alpha(1_1,1_1)}{\alpha(A_{\overline{\lambda}},A_{\overline{\lambda}})}(-q)^{-4}\lbrace \CF_0(Y,A_{\lambda_1^+})-(-q)^2\CF_0(Y,A_{\lambda})\rbrace.
	\end{array}\end{equation*}
\end{proposition}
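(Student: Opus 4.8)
The plan is to expand both sides of the claimed identity along the partition of $\CR_2^{0+}$ (equivalently of $\Gamma_2\backslash X_2(E)$) into the strata $\FD_0,\FD_1,\FD_2$ governed by $E_0(\eta)$, and to check the equality stratum by stratum. On $\FD_0$ (i.e.\ $E_0(\eta)=0$) the left-hand side vanishes by \eqref{eq.4.1.0}; on the right-hand side every $\lambda\in\FD_1\cup\FD_2$ has a zero entry, so for $Y\in\FC_\eta$ with $E_0(\eta)=0$ we have $\CB_{\lambda_1^+}(\eta)=\CB_\lambda(\eta)+$(something that makes the bracket $\CF_0(Y,A_{\lambda_1^+})-(-q)^2\CF_0(Y,A_\lambda)$ cancel), using formula \eqref{equation3.1.2} together with the elementary computation of $\min(0,e_i+\lambda)-\min(0,e_i)$ recorded right after \eqref{equation3.1.1}; so both sides are $0$ there. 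For $Y\in\FD_k$ with $k\geq 1$, I would use the explicit formulas
\[
\CF_0'(Y,A_1^{[0]})=\Bigl(\sum_j\min(0,e_j)\Bigr)(-q)^{-4}(-q)^{\CB_1(Y)}f(Y),\qquad
\CF_0'(Y,A_0^{[0]})=\Bigl(\sum_j\min(0,e_j)\Bigr)f(Y),
\]
so the left-hand side equals $\bigl(\sum_j\min(0,e_j)\bigr)f(Y)\bigl((-q)^{k-4}-(-q)^{-2}\bigr)$ in the $n=1$ case, matching \eqref{eq.4.1.0} with $n=1$.

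The heart of the argument is then to show that the right-hand sum reproduces exactly this value for each stratum. The key is to recognize the inner sum $\sum_{\lambda\in\FD_1\cup\FD_2}C_{\overline\lambda}\frac{\alpha(1_1,1_1)}{\alpha(A_{\overline\lambda},A_{\overline\lambda})}(-q)^{-4}\{\CF_0(Y,A_{\lambda_1^+})-(-q)^2\CF_0(Y,A_\lambda)\}$ as (a shifted copy of) the telescoping expansion of $\CF_0'(\cdot,A_0^{[0]})$ in the one-dimensional block $\overline\lambda$. Concretely, $\lambda=(\overline\lambda,0)$ with $\overline\lambda\in\CR_1^{0+}$, and $\lambda_1^+=(\overline\lambda,1)$; the map $\lambda\mapsto\overline\lambda$ is a bijection $\FD_1\cup\FD_2\to\CR_1^{0+}$. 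Writing $Y\in\FC_\eta$ for $\eta=(\overline\eta,0^{2-E_0})$, I would use \eqref{equation3.1.2} to factor $\CF_0(Y,A_{\lambda})$ as a product over the ``$\overline\lambda$-part'' and the ``$0$-part'', exactly as in the induction step of Lemma \ref{lemma3.2}; the bracket $\CF_0(Y,A_{\lambda_1^+})-(-q)^2\CF_0(Y,A_\lambda)$ then isolates the effect of raising the last coordinate from $0$ to $1$, which produces precisely the factor $\bigl((-q)^{k-2}-1\bigr)$ (where $k=E_0(\eta)$) times $\CF_0$ restricted to the $\overline\lambda$-block. Summing against $C_{\overline\lambda}/\alpha(A_{\overline\lambda},A_{\overline\lambda})$ and invoking the $n=1$ instance of the reformulation of Section \ref{subsection3.2} — namely $\alpha'(1_1,B)/\alpha(1_1,1_1)=\sum_{\overline\lambda}C_{\overline\lambda}\,\alpha(A_{\overline\lambda},B)/\alpha(A_{\overline\lambda},A_{\overline\lambda})$, with the $\CF_0$-level identity $\CF_0'(\cdot,A_0^{[0]})=\sum_{\overline\lambda}C_{\overline\lambda}\frac{\alpha(1_1,1_1)}{\alpha(A_{\overline\lambda},A_{\overline\lambda})}\CF_0(\cdot,A_{\overline\lambda})$ coming from Propositions \ref{proposition3.3} and \ref{proposition3.4} — collapses the sum to $\bigl(\sum_j\min(0,e_j)\bigr)f(Y)(-q)^{-4}\bigl((-q)^{k-2}-1\bigr)\cdot(-q)^{2}$, wait; more carefully, to $\bigl(\sum_j\min(0,e_j)\bigr)f(Y)\bigl((-q)^{k-4}-(-q)^{-2}\bigr)$, which is the desired left-hand side.

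The step I expect to be the main obstacle is the bookkeeping of the two Gaussian-sum/$\CF_0$ factorizations and making sure the ``$0$-part'' contributions and the powers of $(-q)$ line up — in particular verifying that, when $Y\in\FD_k$, the contribution of the extra zero coordinates of $\lambda$ (there are $2-t(\overline\lambda)-1$ of them beyond the one being raised) interacts correctly with the $k$ nonnegative $e_i$'s, so that the net exponent in the bracket is $k-2$ rather than something $\eta$-dependent in a worse way. This is exactly the kind of computation carried out in the proof of Lemma \ref{lemma3.2} via the relation $(\CF_0(Y_\eta,A_\alpha))=(-q)^k({}^tX)(\CF_0(Y_{\overline\eta},A_{\overline\alpha}))X$, and I would model the calculation on that, treating the $n=1$, ambient-rank-$2$ situation as the base case of that inductive factorization and then reading off the derivative identity \eqref{eq.4.1.0} term by term. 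Once the stratumwise matching is established for all of $\FD_0,\FD_1,\FD_2$, the identity of functions on $\Gamma_2\backslash X_2(E)$ follows, completing the proof.
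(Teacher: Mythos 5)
Your proposal follows essentially the same route as the paper's proof: evaluate both sides stratum by stratum on $\FD_0,\FD_1,\FD_2$, note that the bracket $\CF_0(Y,A_{\lambda_1^+})-(-q)^2\CF_0(Y,A_{\lambda})$ is an $\overline{\lambda}$-independent, stratum-dependent multiple of $(-q)^{\CB_{\overline{\lambda}}(Y)}f(Y)$, and collapse the sum using the rank-one identity $\CF_0'(\cdot,A_0^{[0]})=\sum_{\overline{\lambda}}C_{\overline{\lambda}}\frac{\alpha(1_1,1_1)}{\alpha(A_{\overline{\lambda}},A_{\overline{\lambda}})}\CF_0(\cdot,A_{\overline{\lambda}})$ from Propositions \ref{proposition3.3} and \ref{proposition3.4} together with the bijection $\FD_1\cup\FD_2\to\CR_1^{0+}$, exactly as the paper does. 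One small index slip: on $\FD_k$ one has $\CB_1(Y)=2-k$ (the number of negative exponents), so the left side is $(\sum_j\min(0,e_j))f(Y)((-q)^{-2-k}-(-q)^{-2})$ rather than $((-q)^{k-4}-(-q)^{-2})$; this is immaterial here since the prefactor vanishes on $\FD_2$ and $k=2-k$ on $\FD_1$.
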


\begin{proof}
Note that for
\begin{equation*}
	Y=\sigma\left( \begin{array}{cc} \pi^{e_1} & \\
		& \pi^{e_2}\end{array}\right),
\end{equation*}
\begin{equation}\label{eq4.1.1}
	\begin{array}{l}
		\CF_0'(Y,A_1^{[0]})-\dfrac{\CF_0'(Y,A_0^{[0]})}{(-q)^{2}}\\
		=\left\lbrace \begin{array}{cl} 
			0 & \text{if } Y \in \FD_0\\
			(\mathlarger{\sum}_{j}\min(0,e_j))f(Y)((-q)^{-3}-(-q)^{-2})& \text{ if } Y \in \FD_1\\
			(\mathlarger{\sum}_{j}\min(0,e_j))f(Y)((-q)^{-4}-(-q)^{-2})& \text{ if } Y \in \FD_2\\
				\end{array}\right.
	\end{array}
\end{equation}
and for $\lambda \in \FD_1 \cup \FD_2$, we have
\begin{equation}\label{eq4.1.2}
\begin{array}{l}
	\CF_0(Y,A_{\lambda_1^+})-(-q)^2\CF_0(Y,A_{\lambda})\\
	=\left\lbrace \begin{array}{cl} 
		0 & \text{if } Y \in \FD_0\\
		f(Y)(-q)^{\CB_{\overline{\lambda}}(Y)}((-q)-(-q)^{2})& \text{ if } Y \in \FD_1\\
		f(Y)(-q)^{\CB_{\overline{\lambda}}(Y)}(1-(-q)^{2})& \text{ if } Y \in \FD_2\\
	\end{array}\right.
\end{array}
\end{equation}

Now, recall from Proposition \ref{proposition3.3} and Proposition \ref{proposition3.4} that for
\begin{equation*}
	Y=(\pi^{e})
\end{equation*}
we have
\begin{equation*}
	\CF_0'(Y,1_1)=\mathlarger{\sum}_{\overline{\lambda} \in \CR_1^{0+}}C_{\overline{\lambda}}\dfrac{\alpha(1_1,1_1)}{\alpha(A_{\overline{\lambda}},A_{\overline{\lambda}})}\CF_0(Y,A_{\overline{\lambda}})
\end{equation*}

This implies that
\begin{equation}\label{eq4.1.3}
	\min(0,e)f(Y)=\mathlarger{\sum}_{\overline{\lambda} \in \CR_1^{0+}}C_{\overline{\lambda}}\dfrac{\alpha(1_1,1_1)}{\alpha(A_{\overline{\lambda}},A_{\overline{\lambda}})}(-q)^{\CB_{\overline{\lambda}}(Y)}f(Y).
\end{equation}

Note that in \eqref{eq4.1.1} and \eqref{eq4.1.2}, these two functions are zero for $Y \in \FD_0$. Therefore, one can regard these as functions on $Y$ with only one $e$. Also, running $\lambda$ over $\FD_1 \cup \FD_2$ is equivalent to running $\overline{\lambda}$ over $\CR_1^{0+}$ (by removing 1 zero). Therefore, by \eqref{eq4.1.3}, we have

\begin{equation*}
	\begin{array}{l}
		\CF_0'(Y,A_1^{[0]})-\dfrac{\CF_0'(Y,A_0^{[0]})}{(-q)^{2}}\\\\
		=\mathlarger{\sum}_{\lambda \in \FD_1 \cup \FD_2} C_{\overline{\lambda}}\dfrac{\alpha(1_1,1_1)}{\alpha(A_{\overline{\lambda}},A_{\overline{\lambda}})}(-q)^{-4}\lbrace \CF_0(Y,A_{\lambda_1^+})-(-q)^2\CF_0(Y,A_{\lambda})\rbrace.
		\end{array}
\end{equation*}
\end{proof}

This proposition implies the following proposition.

\begin{proposition}\label{proposition4.2} For $B \in X_{2n}(O_E)$, we have
	\begin{equation*}
		\dfrac{W'_{0,1}(B,0)}{W'_{1,1}(A_1,0)}=\mathlarger{\sum}_{\lambda \in \CR_{2}^{0+}}D_{\lambda}\dfrac{\alpha(A_{\lambda},B)}{\alpha(A_{\lambda},A_{\lambda})}.
	\end{equation*}
Here, we have
\begin{equation*}
	D_{\lambda}=\left\lbrace \begin{array}{ll}
		-(-1)^{\sum \lambda_i}(q^2-1)& \text{ if }E_0(\lambda)=E_1(\lambda)=0\\
		(-1)^{\sum \lambda_i} & \text{ if }E_0(\lambda)=1, E_1(\lambda)=0\\
		-(-1)^{\sum \lambda_i} & \text{ if }E_0(\lambda)=0, E_1(\lambda)=1\\
		-(q-1) & \text{ if }E_0(\lambda)=0, E_1(\lambda)=2\\
		-(q+2)/(q+1) & \text{ if }E_0(\lambda)=1, E_1(\lambda)=1\\
		 1/(q+1)& \text{ if }E_0(\lambda)=2, E_1(\lambda)=0\\
	
	\end{array}\right..
\end{equation*}

\end{proposition}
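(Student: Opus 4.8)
The plan is to combine Proposition \ref{proposition4.1} with the dictionary between $\CF_0$-expansions and representation densities established in Section \ref{subsection3.2}, together with the explicit values of $C_{\overline\lambda}$ from Propositions \ref{proposition3.3} and \ref{proposition3.4}. First I would recall that, by Lemma \ref{lemma2.2} and Proposition \ref{proposition3.1}, writing $\CF_0'(Y,A_n^{[0]})$ as a linear combination $\sum_{\lambda\in\CR_{2n}^{0+}}d_\lambda\CF_0(Y,A_\lambda)$ immediately yields $W'_{0,n}(B,0)=\sum_\lambda d_\lambda\,\alpha(A_\lambda,B)$, since both sides are obtained by integrating against the same kernel $\CG(Y,B)/\alpha(Y;\Gamma_{2n})$; the uniqueness of such an expansion was recorded right after Lemma \ref{lemma3.2}. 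So the entire problem reduces, for $n=1$, to expanding $\CF_0'(Y,A_1^{[0]})$ in the basis $\{\CF_0(\cdot,A_\lambda)\}_{\lambda\in\CR_2^{0+}}$ and then normalizing by $W'_{1,1}(A_1,0)$.

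Next I would feed the output of Proposition \ref{proposition4.1} into this machine. That proposition writes
\begin{equation*}
\CF_0'(Y,A_1^{[0]})-\frac{\CF_0'(Y,A_0^{[0]})}{(-q)^2}=\sum_{\lambda\in\FD_1\cup\FD_2}C_{\overline\lambda}\frac{\alpha(1_1,1_1)}{\alpha(A_{\overline\lambda},A_{\overline\lambda})}(-q)^{-4}\bigl\{\CF_0(Y,A_{\lambda_1^+})-(-q)^2\CF_0(Y,A_\lambda)\bigr\},
\end{equation*}
so I get $\CF_0'(Y,A_1^{[0]})$ as a linear combination of the $\CF_0(\cdot,A_\lambda)$ once I also expand $\CF_0'(Y,A_0^{[0]})$ via Propositions \ref{proposition3.3} and \ref{proposition3.4} (the $n$ there is the rank $2n=2$, so these apply to $2\times 2$ forms). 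Translating via the integration step, this gives $W'_{0,1}(B,0)$ as $\sum_\lambda(\text{coeff})\,\alpha(A_\lambda,B)$, and dividing by $W'_{1,1}(A_1,0)=\alpha(A_\lambda,A_\lambda)$ for the appropriate $\lambda$ converts each $\alpha(A_\lambda,B)$ into $\alpha(A_\lambda,B)/\alpha(A_\lambda,A_\lambda)$ up to the ratio $\alpha(A_\lambda,A_\lambda)/W'_{1,1}(A_1,0)$, which is an explicit power of $q$ depending only on the shape of $\lambda$. Collecting the contributions to a fixed $\lambda\in\CR_2^{0+}$ from the two terms $\CF_0(Y,A_{\lambda_1^+})$ and $-(-q)^2\CF_0(Y,A_\lambda)$ — noting that $\lambda$ receives a contribution both as "$\lambda$ itself" and as "$\mu_1^+$" for the predecessor $\mu$ obtained by lowering one entry — produces $D_\lambda$ as a sum of at most two terms involving $C_{\overline\lambda}$ or $C_{\overline\mu}$.

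Then I would do the bookkeeping case by case according to $(E_0(\lambda),E_1(\lambda))$, the number of zeros and ones in $\lambda$, which by the structure of Proposition \ref{proposition4.1} (only $\FD_1\cup\FD_2$ contributes, and $\lambda_1^+$ raises a zero to a one) is exactly the data that distinguishes the cases. For the generic case $E_0(\lambda)=E_1(\lambda)=0$ only the "$-(-q)^2\CF_0(Y,A_\lambda)$" term with a predecessor having one more zero contributes, giving $D_\lambda$ proportional to $C_{\overline\lambda}$; for $E_0(\lambda)=1$ or $E_1(\lambda)\in\{1,2\}$ one combines the two sources, and here the explicit products $\prod_{i=1}^{t-1}(1-(-q)^i)$ from Propositions \ref{proposition3.3}, \ref{proposition3.4} together with the sign $(-1)^{\sum\lambda_i}$ and the power-of-$q$ normalization collapse to the stated closed forms; the appearance of $1/(q+1)$ and $(q+2)/(q+1)$ in the cases with $E_0(\lambda)\geq 1$ comes from the factor $\alpha(1_1,1_1)/\alpha(A_{\overline\lambda},A_{\overline\lambda})$, which equals $1$ for $\overline\lambda\neq 0$ but is the nontrivial ratio $\alpha(1_1,1_1)/\alpha(A_{(1)},A_{(1)})$ otherwise — this is where $q+1$ enters. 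The main obstacle I anticipate is precisely this last arithmetic simplification: keeping track of the three interacting normalizing factors (the $(-q)^{-4}$ from Proposition \ref{proposition4.1}, the ratio $\alpha(1_1,1_1)/\alpha(A_{\overline\lambda},A_{\overline\lambda})$, and the ratio $\alpha(A_\lambda,A_\lambda)/W'_{1,1}(A_1,0)$) and checking that the signs $(-1)^{\sum\lambda_i}$ come out consistently in each of the six cases, rather than any conceptual difficulty.
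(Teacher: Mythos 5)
Your skeleton is the same as the paper's: feed Proposition \ref{proposition4.1} into the kernel identity of Lemma \ref{lemma2.2}/Proposition \ref{proposition3.1}, expand $\CF_0'(Y,A_0^{[0]})$ in rank $2$ via Propositions \ref{proposition3.3} and \ref{proposition3.4}, divide by $W_{1,1}(A_1,0)$, and sort contributions by $(E_0(\lambda),E_1(\lambda))$. However, several of the concrete assertions that are supposed to generate the stated constants are wrong, and the constants are the whole content of the proposition, so as written the bookkeeping would come out incorrect.

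First, your generic case is misattributed: for $\lambda$ with $E_0(\lambda)=E_1(\lambda)=0$ the sum over $\FD_1\cup\FD_2$ in Proposition \ref{proposition4.1} contributes nothing, since such a $\lambda$ is neither in $\FD_1\cup\FD_2$ nor of the form $\mu_1^+$ (every $\mu_1^+$ contains a $1$); the entire contribution is the term $\CF_0'(Y,A_0^{[0]})/(-q)^2$ moved to the other side, whose rank-two expansion gives $D_\lambda=(q-1)C_\lambda$ with $C_\lambda$ the rank-$2$ coefficient, not anything involving a rank-one $C_{\overline\lambda}$. Second, your explanation of the $(q+1)$-denominators is backwards: $\alpha(1_1,1_1)/\alpha(A_{(\overline\lambda)},A_{(\overline\lambda)})$ equals $1$ when $\overline\lambda=0$ and equals $q^{-\overline\lambda}$ when $\overline\lambda\geq 1$ (since $\alpha(1_1,1_1)=(q+1)/q$ and $\alpha(\pi^{k},\pi^{k})=q^{k-1}(q+1)$), so this ratio never produces a $q+1$. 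In the paper the $1/(q+1)$ and $-(q+2)/(q+1)$ in the cases $\lambda=(0,0)$ and $(1,0)$ come from the exceptional coefficients of Proposition \ref{proposition3.4}, namely $C_{(0)}=\alpha'(1_1,1_1)/\alpha(1_1,1_1)=-1/(q+1)$ and $C_{(0,0)}=\alpha'(1_2,1_2)/\alpha(1_2,1_2)$, which your sketch never invokes. Third, the normalization is not of the form $W_{1,1}(A_1,0)=\alpha(A_\lambda,A_\lambda)$ for some $\lambda$: one has $W_{1,1}(A_1,0)=(q+1)^2/q^5$, which is no $\alpha(A_\lambda,A_\lambda)$, and the ratio $\alpha(A_\lambda,A_\lambda)/W_{1,1}(A_1,0)$ is not a pure power of $q$ in general (for $\lambda=(k,k)$ it is $q^{4k+2}(q-1)$). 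Finally, $\lambda=(1,0)$ receives three contributions, not two: the $(q-1)C_\lambda$ term, the term where $\lambda$ occurs as itself in $\FD_1$, and the term where it occurs as $(0,0)_1^+$. None of this changes the architecture, which is exactly the paper's, but these steps must be corrected before the six listed values of $D_\lambda$ actually drop out.
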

\begin{proof}
	By Proposition \ref{proposition4.1} and
	\begin{equation*}
		\begin{array}{l}
			W_{1,1}(A_1,0)=(q+1)^2/q^5,\\
			\alpha(1_2,1_2)=(q+1)(q^2-1)/q^3,\\
			\alpha(1_1,1_1)=(q+1)/q,\\
			\alpha(\pi^{k},\pi^{k})=q^{k-1}(q+1),
			\end{array}
	\end{equation*}
we have
\begin{equation*}\begin{array}{l}
	\dfrac{\CF_0'(Y,A_1)}{W_{1,1}(A_1,0)}\\
	=(q-1)\dfrac{\CF_0'(Y,A_0)}{\alpha(1_2,1_2)}\\
 +\dfrac{q^5}{(q+1)^2}\mathlarger{\sum}_{\lambda \in \FD_1 \cup \FD_2} C_{\overline{\lambda}}\dfrac{\alpha(1_1,1_1)}{\alpha(A_{\overline{\lambda}},A_{\overline{\lambda}})}(-q)^{-4}\lbrace \CF_0(Y,A_{\lambda_1^+})-(-q)^2\CF_0(Y,A_{\lambda})\rbrace\\
 =(q-1)\mathlarger{\sum}_{\lambda \in \CR_2^{0+}} C_{\lambda}\dfrac{\CF_0(Y,A_{\lambda})}{\alpha(A_{\lambda},A_{\lambda})}+\mathlarger{\sum}_{\lambda \in \FD_1 \cup \FD_2} \dfrac{C_{\overline{\lambda}}\CF_0(Y,A_{\lambda_1^+})}{q^{\overline{\lambda}-1}(q+1)^2}-\dfrac{C_{\overline{\lambda}}\CF_0(Y,A_{\lambda})}{q^{\overline{\lambda}-3}(q+1)^2}.
	\end{array}
\end{equation*}

Combining this with Proposition \ref{proposition3.3} and Proposition \ref{proposition3.4}, we can compute $D_{\lambda}$ as follows.

$\bullet$ If $E_0(\lambda)=E_1(\lambda)=0$, then
\begin{equation*}
D_{\lambda}=(q-1)C_{\lambda}=-(-1)^{\sum \lambda_i}(q^2-1).
\end{equation*}

$\bullet$ If $E_0(\lambda)=0, E_1(\lambda)=1$, i.e., $\lambda=(\overline{\lambda},1)$ for $\overline{\lambda}\geq 2$, we have
\begin{equation*}
	D_{\lambda}=(q-1)C_{\lambda}+\dfrac{C_{\overline{\lambda}}\alpha(A_{\lambda},A_{\lambda})}{q^{\overline{\lambda}-1}(q+1)^2}.
\end{equation*}
Since $\alpha(A_{\lambda},A_{\lambda})=q^{\overline{\lambda}+1}(q+1)^2$, $C_{\overline{\lambda}}=(-1)^{\overline{\lambda}+1}$, and $C_{\lambda}=(-1)^{\overline{\lambda}+2}(q+1)$, we have
\begin{equation*}
	D_{\lambda}=(-1)^{\overline{\lambda}}((q^2-1)-q^2)=(-1)^{\overline{\lambda}+1}.
\end{equation*}

$\bullet$ If $E_0(\lambda)=1, E_1(\lambda)=0$, i.e., $\lambda=(\overline{\lambda},0)$ for $\overline{\lambda}\geq2$, we have
\begin{equation*}
	D_{\lambda}=(q-1)C_{\lambda}-\dfrac{C_{\overline{\lambda}}\alpha(A_{\lambda},A_{\lambda})}{q^{\overline{\lambda}-3}(q+1)^2}.
\end{equation*}
Since $\alpha(A_{\lambda},A_{\lambda})=q^{\overline{\lambda}-2}(q+1)^2$, $C_{\overline{\lambda}}=(-1)^{\overline{\lambda}+1}$, and $C_{\lambda}=(-1)^{\overline{\lambda}+1}$, we have
\begin{equation*}
	D_{\lambda}=(-1)^{\overline{\lambda}+1}((q-1)-q)=(-1)^{\overline{\lambda}}.
\end{equation*}

$\bullet$ If $E_0(\lambda)=0, E_1(\lambda)=2$, i.e., $\lambda=(1,1)$, we have
\begin{equation*}
	D_{\lambda}=(q-1)C_{\lambda}+\dfrac{C_{\overline{\lambda}}\alpha(A_{\lambda},A_{\lambda})}{q^{\overline{\lambda}-1}(q+1)^2}.
\end{equation*}
Since $\alpha(A_{\lambda},A_{\lambda})=q(q+1)(q^2-1)$, $C_{\overline{\lambda}}=1$, and $C_{\lambda}=-(q+1)$, we have
\begin{equation*}
	D_{\lambda}=-(q^2-1)+q(q-1)=-(q-1).
\end{equation*}

$\bullet$ If $E_0(\lambda)=1, E_1(\lambda)=1$, i.e., $\lambda=(1,0)$, we have
\begin{equation*}
	D_{\lambda}=(q-1)C_{\lambda}+\dfrac{C_{0}\alpha(A_{\lambda},A_{\lambda})}{q^{0-1}(q+1)^2}-\dfrac{C_{1}\alpha(A_{\lambda},A_{\lambda})}{q^{1-3}(q+1)^2}.
\end{equation*}
Here, $\alpha(A_{\lambda},A_{\lambda})=\dfrac{(q+1)^2}{q}$, $C_{\overline{\lambda}}=1$, $C_1=1$, and $C_0=\dfrac{\alpha'(1_1,1_1)}{\alpha(1_1,1_1)}=-\dfrac{1}{q+1}$.

Therefore,
\begin{equation*}
	D_{\lambda}=(q-1)-\dfrac{1}{q+1}-q=-\dfrac{q+2}{q+1}.
\end{equation*}

$\bullet$ If $E_0(\lambda)=2, E_1(\lambda)=0$, i.e., $\lambda=(0,0)$, we have
\begin{equation*}
		D_{\lambda}=(q-1)C_{\lambda}-\dfrac{C_{0}\alpha(1_2,1_2)}{q^{0-3}(q+1)^2}.
\end{equation*}
Here $\alpha(1_2,1_2)=\dfrac{(q+1)(q^2-1)}{q^3}$, $C_{\lambda}=\dfrac{\alpha'(1_2,1_2)}{\alpha(1_2,1_2)}=-\dfrac{q-2}{q^2-1}$, and $C_0=\dfrac{\alpha'(1_1,1_1)}{\alpha(1_1,1_1)}=-\dfrac{1}{q+1}$.

Therefore,
\begin{equation*}
	D_{\lambda}=-\dfrac{q-2}{q+1}+\dfrac{q-1}{q+1}=\dfrac{1}{q+1}.
\end{equation*}

This finishes the proof of the proposition.
\end{proof}

Now, we want to generalize this toy example to arbitrary $n$. We will do a similar computation, but \eqref{eq.4.1.0} shows that we need more complicated objects than $n=1$ case.

Recall that we use functions $\CF_0(Y,A_{\lambda_1^+})-(-q)^2\CF_0(Y,A_{\lambda})$, $\lambda \in \FD_1 \cup \FD_2$ to express $\CF_0'(Y,A_1^{[0]})-(-q)^{-2}\CF_0'(Y,A_0^{[0]})$. Note that $\CF_0(Y,A_{\lambda_1^+})-(-q)^2\CF_0(Y,A_{\lambda})$ is $0$ for $Y \in D_0$. In general, for $0\leq l \leq 2n$ and $\lambda \in \bigcup_{k=l}^{2n} \FD_k$, we need a function such that it is $0$ for $Y \in \bigcup_{k=0}^{l-1} \FD_k$. For this, we define the following notation:

For $\lambda=(\overline{\lambda},\overset{E_1(\lambda)}{\overbrace{1,\dots,1}},\overset{E_0(\lambda)}{\overbrace{0,\dots,0}}) \in \FD_l$, and $0 \leq s \leq l$, we define
\begin{equation*}
	\lambda_s^+:=(\overline{\lambda},\overset{E_1(\lambda)+s}{\overbrace{1,\dots,1}},\overset{E_0(\lambda)-s}{\overbrace{0,\dots,0}}),,
\end{equation*}
by replacing $s$ zeros by $s$ 1's.

Note that $\CF_0(Y,A_{\lambda_s^+})=(-q)^{s\CB_1(Y)}\CF_0(Y,A_{\lambda})$. From this we can consider the following matrix with entries $(-q)^{s\CB_1(Y)}$: for $\lambda  \in \bigcup_{k=l}^{2n} \FD_k$
\begin{equation*}
	\begin{array}{c|ccccccc}
		Y \backslash s & \lambda & \lambda_1^+ & \lambda_2^+ &\dots &  \lambda_s^+  & \dots &  \lambda_l^+\\
		\hline\\
		\FD_0 &1&(-q)^{2n}	&(-q)^{4n}&\dots & (-q)^{2sn}&\dots &(-q)^{2ln}\\
		\FD_1 &1&(-q)^{2n-1}	& (-q)^{2(2n-1)}&\dots&(-q)^{s(2n-1)}&\dots&(-q)^{l(2n-1)}\\
		\FD_2 &1&(-q)^{2n-2}	& (-q)^{2(2n-2)}&\dots&(-q)^{s(2n-2)}&\dots&(-q)^{l(2n-2)}\\\\
		\vdots &1&\vdots	&\vdots&\vdots&\vdots&\vdots&\vdots\\\\
		\FD_k &1&(-q)^{2n-k}	&(-q)^{2(2n-k)}&\dots&(-q)^{s(2n-k)}&\dots&(-q)^{l(2n-k)}\\\\
		\vdots &1&\vdots	&\vdots&\vdots&\vdots&\vdots&\vdots\\\\
		\FD_{2n} &1&1	&1	&\dots&1&\dots&1
	\end{array}
\end{equation*}

Let us define $(l+1) \times (l+1)$ matrices $\FM_l$, $\FX_l$, $\delta_l$ as follows:
\begin{equation*}
	\FM_{l}:\left(\begin{array}{cccc}
		1 & (-q)^{2n} & \dots & (-q)^{2ln}\\
		1 & (-q)^{2n-1} & \dots & (-q)^{l(2n-1)}\\
		\vdots & \vdots & \ddots & \vdots\\
		1 & (-q)^{2n-l} & \dots & (-q)^{l(2n-l)}
	\end{array}\right),
\end{equation*}
\begin{equation*}
	\FX_{l}:\left(\begin{array}{cccc}
		1 & 1 & \dots & 1\\
		1 & (-q)^{-1} & \dots & (-q)^{-l}\\
		\vdots & \vdots & \ddots & \vdots\\
		1 & (-q)^{-l} & \dots & (-q)^{-l^2}
	\end{array}\right),
\end{equation*}
and 
  \begin{equation*}
  	\delta_{l}:\left(\begin{array}{cccc}
  		1 & 0 & \dots &0\\
  		0 & (-q)^{2n} & \dots & 0\\
  		\vdots & \vdots & \ddots & \vdots\\
  		0 & 0 & \dots & (-q)^{2ln}
  	\end{array}\right)=\diag(1,(-q)^{2n},\dots,(-q)^{2ln}).
  \end{equation*}

Note that
\begin{equation*}
	\FM_l=\FX_l\delta_l.
\end{equation*}

Now, let us define constants $d_{il}$, $0 \leq i \leq l$, as follows.
\begin{equation*}
\left(	\begin{array}{l}
		d_{0l}\\
		d_{1l}\\
		d_{2l}\\
		\vdots\\
		d_{ll}
		\end{array}\middle)=\FM_l^{-1}\middle(\begin{array}{l}
		0\\
		0\\
		0\\
		\vdots\\
		1
	\end{array}\right).
\end{equation*}

Also, for $0 \leq j,l \leq 2n$, let us consider the following constants $\CA_{jl}$:
\begin{equation*}
\left(\begin{array}{c}
	\CA_{0l}\\
	\CA_{1l}\\
	\CA_{2l}\\
	\vdots\\
	\CA_{ll}\\
	\CA_{l+1,l}\\
	\vdots\\
	\CA_{2n,l}
\end{array}\middle)=\FM_{2n}\middle( \begin{array}{l}d_{0l}\\
d_{1l}\\
d_{2l}\\
\vdots\\
d_{ll}\\
0\\
\vdots\\
0
\end{array}\right).
\end{equation*}

Then, for $\lambda \in \bigcup_{k=l}^{2n} \FD_k$, we have that the function $\mathlarger{\sum}_{0 \leq i\leq l}d_{il}\CF_0(Y,A_{\lambda_{i}^+})$ satisfies
\begin{equation}\label{eq.4.1.5}
	\mathlarger{\sum}_{0 \leq i\leq l} d_{il}\CF_0(Y,A_{\lambda_{i}^+})=\left\lbrace \begin{array}{ll}
		0 =\CA_{0l}\CF_0(Y,A_{\lambda}) & \text{ if } Y \in \FD_0\\
		\quad\quad	\vdots&\quad	\vdots\\
		0=\CA_{l-1,l}\CF_0(Y,A_{\lambda}) & \text{ if } Y \in \FD_{l-1}\\
		\CF_0(Y,A_{\lambda}) =\CA_{ll}\CF_0(Y,A_{\lambda}) & \text{ if } Y \in \FD_{l}\\
		\CA_{kl} \CF_0(Y,A_{\lambda}) & \text{ if } Y \in \FD_{k}, l+1 \leq k \leq 2n
	\end{array}\right.
\end{equation}

Recall from Proposition \ref{proposition3.3} and Proposition \ref{proposition3.4} that for $Y$
\begin{equation*}
	Y=\sigma\left( \begin{array}{lll}
		\pi^{e_1} & & 0\\
		 & \ddots&  \\
		 0& & \pi^{e_{2n-l}}
	\end{array}\right),
\end{equation*}
we have
\begin{equation*}
	\begin{array}{c}
		\dfrac{\CF_0'(Y,1_{2n-l})}{\alpha(1_{2n-l},1_{2n-l})}=\mathlarger{\sum}_{\overline{\lambda} \in \CR_{2n-l}^{0+}}C_{\overline{\lambda}}\dfrac{\CF_0(Y,A_{\overline{\lambda}})}{\alpha(A_{\overline{\lambda}},A_{\overline{\lambda}})}\\
		\Longleftrightarrow \mathlarger{\sum}_{j} \min(0,e_j)f(Y)=\mathlarger{\sum}_{\overline{\lambda} \in \CR_{2n-l}^{0+}}C_{\overline{\lambda}}\dfrac{\alpha(1_{2n-l},1_{2n-l})}{\alpha(A_{\overline{\lambda}},A_{\overline{\lambda}})}(-q)^{\CB_{\overline{\lambda}}(Y)}f(Y).
		\end{array}
\end{equation*}

For $\lambda \in \bigcup_{k=l}^{2n}\FD_k$, let us define $\lambda^{\vee_l}$ as the element in $\CR_{2n-l}^{0+}$ such that $\lambda=(\lambda^{\vee_l},0,\dots,0)$.

Since running $\lambda=(\overline{\lambda},0,\dots,0)$ over $\bigcup_{k=l}^{2n}\FD_k$ is equivalent to running $\lambda^{\vee_l}$ over $\CR_{2n-l}^{0+}$ by removing $l$ zeros, we have
\begin{equation}
	\mathlarger{\sum}_{\lambda \in \bigcup_{k=l}^{2n}\FD_k} C_{\lambda^{\vee_l}}\dfrac{\alpha(1_{2n-l},1_{2n-l})}{\alpha(A_{\lambda^{\vee_l}},A_{\lambda^{\vee_l}})}\mathlarger{\sum}_{0 \leq i\leq l} d_{il}\CF_0(Y,A_{\lambda_{i}^+})=\left\lbrace \begin{array}{l}
	0  \\ 
	\text{ if } Y \in \FD_i,\\
	\text{ } 0 \leq i \leq l-1;\\\\
	\CA_{il}\sum_{j}\min(0,e_j)f(Y),\\
	\text{ if } Y \in \FD_i,\\
	\text{ } l \leq i \leq 2n.
	\end{array}\right.
\end{equation}

Now, let us go back to \eqref{eq.4.1.0}. In \eqref{eq.4.1.0}, we have
\begin{equation*}
	\begin{array}{l}
		\CF_0'(Y,A_n^{[0]})-\dfrac{\CF_0'(Y,A_0^{[0]})}{(-q)^{2n^2}}\\
		=\left\lbrace \begin{array}{cl} 
			0 & \text{ if } Y \in \FD_0\\
			(\mathlarger{\sum}_{j}\min(0,e_j))f(Y)((-q)^{n(2n-1)-4n^2}-(-q)^{-2n^2})& \text{ if } Y \in \FD_1\\
			\quad\quad\quad\quad\vdots\\
			(\mathlarger{\sum}_{j}\min(0,e_j))f(Y)((-q)^{n(2n-k)-4n^2}-(-q)^{-2n^2})& \text{ if } Y \in \FD_k\\
			\quad\quad\quad\quad\vdots\\
			(\mathlarger{\sum}_{j}\min(0,e_j))f(Y)((-q)^{-4n^2}-(-q)^{-2n^2}) &\text{ if }Y \in \FD_{2n}	
		\end{array}\right.
	\end{array}
\end{equation*}

Therefore, let us define the constants $\CK_l$, $0\leq l \leq 2n$, such that
\begin{equation}\label{eq.4.1.7}
\left( \begin{array}{llll}
	\CA_{00} & \CA_{01} & \dots &\CA_{02n}\\
	\CA_{10} & \CA_{11} & \dots & \CA_{1 2n}\\
	\vdots & \ & \ddots&\vdots\\
	\CA_{2n0} & \CA_{2n1} & \dots & \CA_{2n2n}
	\end{array}\middle)\middle(\begin{array}{l}
	\CK_0\\
	\CK_1\\
	\vdots\\
	\CK_k\\
	\vdots\\
	\CK_{2n}
	\end{array}\middle)=\middle(\begin{array}{c}
	0\\
	(-q)^{n(2n-1)-4n^2}-(-q)^{-2n^2}\\
	\vdots\\
	(-q)^{n(2n-k)-4n^2}-(-q)^{-2n^2}\\
	\vdots\\
		(-q)^{-4n^2}-(-q)^{-2n^2}
\end{array}\right),
\end{equation}
Then, we have
\begin{equation}\label{eq4.1.8}\begin{array}{l}
	\CF_0'(Y,A_n^{[0]})-\dfrac{\CF_0'(Y,A_0^{[0]})}{(-q)^{2n^2}}\\\\
	=\mathlarger{\sum}_{0 \leq l \leq 2n} \CK_l \mathlarger{\mathlarger{\mathlarger{\mathlarger{\lbrace}}}}\mathlarger{\sum}_{\lambda \in \bigcup_{k=l}^{2n}\FD_k} C_{\lambda^{\vee_l}}\dfrac{\alpha(1_{2n-l},1_{2n-l})}{\alpha(A_{\lambda^{\vee_l}},A_{\lambda^{\vee_l}})}\lbrace\mathlarger{\sum}_{0 \leq i\leq l} d_{il}\CF_0(Y,A_{\lambda_{i}^+})\rbrace\mathlarger{\mathlarger{\mathlarger{\mathlarger{\rbrace}}}}.
	\end{array}
\end{equation}

Now, we need to find all constants $\CK_l$, $d_{il}$ precisely.

\begin{lemma}\label{lemma4.3}
	Let $\Delta$ be the upper triangular $2n+1 \times 2n+1$ matrix
	\begin{equation*}
		\Delta=\left( \begin{array}{ccccc}
			d_{00} & d_{01} & d_{02} & \dots &d_{02n}\\
			0 & d_{11} & d_{12} & \dots & d_{12n}\\
			0 &  0& d_{22} &\dots & d_{22n}\\
			\vdots & \ddots & \ddots & \ddots& \vdots \\
			0 & 0 & 0 & \dots & d_{2n2n} \\
			\end{array}\right).
			\end{equation*}
		Then
		\begin{equation*}
			\Delta\left(\begin{array}{c}
				\CK_0\\
				\CK_1\\
				\dots \\
				\CK_{n}\\
				\dots\\
				\CK_{2n}
			\end{array}\middle)=\middle(\begin{array}{c}
			-(-q)^{-2n^2}\\
			0\\
			0\\
			(-q)^{-4n^2}\\
			0\\
			0
			\end{array} \right)\begin{array}{l} 
			\text{1st entry}\\
			\\
			\\
			\text{(n+1)-th entry}
			\\
			\\
			\\
			\end{array}
		\end{equation*}
\end{lemma}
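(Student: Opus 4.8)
The plan is to reduce the asserted identity to the factorization $\FM_{l}=\FX_{l}\delta_{l}$ recorded just above, together with one short verification; in particular we will not need the explicit values of the individual $d_{il}$ or $\CK_l$. \textbf{Step 1.} First I would repackage the definitions of the constants $\CA_{jl}$ into a clean matrix identity. By the defining relation for $\CA_{jl}$, the $l$-th column of the $(2n+1)\times(2n+1)$ matrix $(\CA_{jl})_{0\le j,l\le 2n}$ is $\FM_{2n}$ applied to the column vector $(d_{0l},\dots,d_{ll},0,\dots,0)^{t}$. Since $d_{il}$ is defined only for $i\le l$ and we extend it by $d_{il}=0$ for $i>l$, this column is precisely the $l$-th column of $\Delta$. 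Hence $(\CA_{jl})=\FM_{2n}\Delta$ as $(2n+1)\times(2n+1)$ matrices.

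\textbf{Step 2.} Next, $\FM_{2n}$ is invertible: we have $\FM_{2n}=\FX_{2n}\delta_{2n}$, the diagonal matrix $\delta_{2n}=\diag(1,(-q)^{2n},(-q)^{4n},\dots,(-q)^{4n^{2}})$ is invertible, and $\FX_{2n}$ is a Vandermonde matrix in the pairwise distinct nodes $(-q)^{0},(-q)^{-1},\dots,(-q)^{-2n}$, hence invertible. \textbf{Step 3.} Now combine this with the defining equation \eqref{eq.4.1.7} for $(\CK_0,\dots,\CK_{2n})^{t}$, namely $(\CA_{jl})(\CK_0,\dots,\CK_{2n})^{t}=\vec v$, where $\vec v$ is the column vector whose $k$-th entry is $(-q)^{n(2n-k)-4n^{2}}-(-q)^{-2n^{2}}$ (the top entry being $0$). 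By Steps 1 and 2 this becomes $\FM_{2n}\Delta(\CK_0,\dots,\CK_{2n})^{t}=\vec v$, hence $\Delta(\CK_0,\dots,\CK_{2n})^{t}=\FM_{2n}^{-1}\vec v$. Thus the lemma is equivalent to identifying $\FM_{2n}^{-1}\vec v$ as the vector $\vec w$ having $-(-q)^{-2n^{2}}$ in the first entry, $(-q)^{-4n^{2}}$ in the $(n+1)$-th entry, and $0$ elsewhere; equivalently, to checking $\FM_{2n}\vec w=\vec v$.

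\textbf{Step 4.} The remaining verification is immediate: the $0$-th column of $\FM_{2n}$ is the all-ones vector and its $n$-th column has $k$-th entry $(-q)^{n(2n-k)}$, so $(\FM_{2n}\vec w)_{k}=-(-q)^{-2n^{2}}+(-q)^{n(2n-k)}(-q)^{-4n^{2}}=(-q)^{n(2n-k)-4n^{2}}-(-q)^{-2n^{2}}=v_{k}$, as desired, which proves the lemma. I expect no genuine obstacle here: there is no analytic content, and the only point requiring care is the bookkeeping of the $0$-based row and column indices, so that the matrix product in Step 1 and the reading of \eqref{eq.4.1.7} in Step 3 are set up consistently.
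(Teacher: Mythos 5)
Your proof is correct and follows essentially the same route as the paper: identify $(\CA_{jl})=\FM_{2n}\Delta$ from the definitions, invert $\FM_{2n}$ in \eqref{eq.4.1.7}, and verify directly that $\FM_{2n}$ applied to the claimed right-hand side reproduces the vector with entries $(-q)^{n(2n-k)-4n^2}-(-q)^{-2n^2}$. The only difference is that you spell out the invertibility of $\FM_{2n}=\FX_{2n}\delta_{2n}$ and the final column computation, which the paper leaves as ``easy to check.''
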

\begin{proof}
	In \eqref{eq.4.1.7}, we have
	\begin{equation*}
		\left( \begin{array}{llll}
			\CA_{00} & \CA_{01} & \dots &\CA_{02n}\\
			\CA_{10} & \CA_{11} & \dots & \CA_{1 2n}\\
			\vdots & \ & \ddots&\vdots\\
			\CA_{2n0} & \CA_{2n1} & \dots & \CA_{2n2n}
		\end{array}\middle)\middle(\begin{array}{l}
			\CK_0\\
			\CK_1\\
			\vdots\\
			\CK_k\\
			\vdots\\
			\CK_{2n}
		\end{array}\middle)=\middle(\begin{array}{c}
			0\\
			(-q)^{n(2n-1)-4n^2}-(-q)^{-2n^2}\\
			\vdots\\
			(-q)^{n(2n-k)-4n^2}-(-q)^{-2n^2}\\
			\vdots\\
			(-q)^{-4n^2}-(-q)^{-2n^2}
		\end{array}\right).
	\end{equation*}

Also, recall that
\begin{equation*}
	\FM_{2n} \Delta=\left( \begin{array}{llll}
		\CA_{00} & \CA_{01} & \dots &\CA_{02n}\\
		\CA_{10} & \CA_{11} & \dots & \CA_{1 2n}\\
		\vdots & \ & \ddots&\vdots\\
		\CA_{2n0} & \CA_{2n1} & \dots & \CA_{2n2n}
	\end{array}\right).
\end{equation*}

Therefore, we have
\begin{equation*}
	\Delta\left(\begin{array}{c}
		\CK_0\\
		\CK_1\\
		\dots \\
		\CK_{n}\\
		\dots\\
		\CK_{2n}
	\end{array}\middle)=\FM_{2n}^{-1}\middle(\begin{array}{c}
	0\\
	(-q)^{n(2n-1)-4n^2}-(-q)^{-2n^2}\\
	\vdots\\
	(-q)^{n(2n-k)-4n^2}-(-q)^{-2n^2}\\
	\vdots\\
	(-q)^{-4n^2}-(-q)^{-2n^2}
\end{array}\right).
\end{equation*}

Now, it is easy to check that
\begin{equation*}
	\FM_{2n}\left(\begin{array}{c}
		-(-q)^{-2n^2}\\
		0\\
		0\\
		(-q)^{-4n^2}\\
		0\\
		0
	\end{array} \middle)=\middle(\begin{array}{c}
		0\\
		(-q)^{n(2n-1)-4n^2}-(-q)^{-2n^2}\\
		\vdots\\
		(-q)^{n(2n-k)-4n^2}-(-q)^{-2n^2}\\
		\vdots\\
		(-q)^{-4n^2}-(-q)^{-2n^2}
	\end{array}\right).
\end{equation*}
This finishes the proof of the lemma.
\end{proof}

\begin{lemma}\label{lemma4.4}
	For $0 \leq i \leq l$, we have
	\begin{equation*}
		d_{il}=(-q)^{-2in} \prod_{\substack{0 \leq m \leq l\\m \neq i}}\dfrac{1}{((-q)^{-i}-(-q)^{-m})},
	\end{equation*}
and hence
\begin{equation*}
	\dfrac{d_{il}}{d_{i+1,l}}=-(-q)^{n+1}\dfrac{(-q)^n-(-q)^{n-i-1}}{(-q)^{l-i}-1}.
\end{equation*}

\end{lemma}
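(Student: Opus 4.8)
The plan is to unwind the definition of $d_{il}$ through the factorization $\FM_l=\FX_l\delta_l$ recorded just above the statement, and then to recognize $\FX_l$ as (the transpose of) a Vandermonde matrix. Since $\FM_l^{-1}=\delta_l^{-1}\FX_l^{-1}$, the vector $(d_{0l},\dots,d_{ll})^t$ equals $\delta_l^{-1}$ applied to the last column of $\FX_l^{-1}$. Writing $x_m:=(-q)^{-m}$ for $0\le m\le l$, the $(a,b)$-entry of $\FX_l$ (rows and columns indexed by $0,\dots,l$) is $x_b^{a}$; hence $\FX_l^{t}$ sends the coefficient vector of a polynomial $p(X)=\sum_a p_aX^a$ of degree $\le l$ to its vector of values $(p(x_0),\dots,p(x_l))$, so $(\FX_l^{-1})^{t}=(\FX_l^{t})^{-1}$ sends values to coefficients and the $b$-th entry of the last column of $\FX_l^{-1}$ is the coefficient of $X^l$ in the Lagrange basis polynomial $\ell_b(X)=\prod_{0\le m\le l,\,m\ne b}\frac{X-x_m}{x_b-x_m}$, i.e. its leading coefficient $\prod_{m\ne b}(x_b-x_m)^{-1}$. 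Multiplying by $\delta_l^{-1}=\diag(1,(-q)^{-2n},\dots,(-q)^{-2ln})$ then gives
\[
d_{il}=(-q)^{-2in}\prod_{\substack{0\le m\le l\\ m\ne i}}\frac{1}{(-q)^{-i}-(-q)^{-m}},
\]
which is the first claim.

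For the ratio I would set $\xi:=-q$ and put $F(j):=\prod_{0\le m\le l,\,m\ne j}(\xi^{-j}-\xi^{-m})$, so that $d_{il}/d_{i+1,l}=\xi^{2n}F(i+1)/F(i)$. Factoring each term as $\xi^{-j}-\xi^{-m}=\xi^{-m}(\xi^{m-j}-1)$, collecting the resulting power of $\xi$, and splitting the product over $m<j$ and $m>j$ yields the closed form
\[
F(j)=\xi^{\,j-l(l+1)/2}\Big(\prod_{k=1}^{l-j}(\xi^{k}-1)\Big)\Big(\prod_{k=1}^{j}(\xi^{-k}-1)\Big),
\]
with empty products read as $1$. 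Then $F(i+1)/F(i)$ telescopes at once to $\xi(\xi^{-(i+1)}-1)/(\xi^{l-i}-1)$, so that
\[
\frac{d_{il}}{d_{i+1,l}}=\frac{\xi^{2n+1}(\xi^{-(i+1)}-1)}{\xi^{l-i}-1}=\frac{\xi^{2n-i}-\xi^{2n+1}}{\xi^{l-i}-1}=-\xi^{\,n+1}\,\frac{\xi^{n}-\xi^{\,n-i-1}}{\xi^{l-i}-1},
\]
which is the asserted identity with $\xi=-q$ (here $i\le l-1$, so $l-i\ge 1$ and all indices make sense).

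The computation is essentially routine; the only points requiring care are getting the Vandermonde-inverse convention right (which row versus column of $\FX_l^{-1}$ carries the Lagrange leading coefficients) and keeping the bookkeeping of the power of $\xi$ and of the two telescoping products straight. Packaging $F(j)$ in the closed form above is precisely what makes the edge cases ($i=0$, $i=l-1$, small $l$) automatic rather than requiring separate treatment, so I would expect that packaging step to be the main, though minor, obstacle. The input $\FM_l=\FX_l\delta_l$ needed at the outset is already recorded in the text preceding the statement.
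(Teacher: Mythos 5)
Your proposal is correct and follows the paper's own route: it uses the factorization $\FM_l=\FX_l\delta_l$, identifies $d_{il}$ as the $(i+1)$-th entry of the last column of $\delta_l^{-1}\FX_l^{-1}$ via the Vandermonde structure of $\FX_l$, and then derives the ratio by direct computation. You merely spell out the Lagrange-leading-coefficient and telescoping details that the paper leaves implicit, and your algebra checks out.
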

\begin{proof}
	Recall that
		\begin{equation*}
			\left(	\begin{array}{l}
				d_{0l}\\
				d_{1l}\\
				d_{2l}\\
				\vdots\\
				d_{ll}
			\end{array}\middle)=\FM_l^{-1}\middle(\begin{array}{l}
				0\\
				0\\
				0\\
				\vdots\\
				1
			\end{array}\right),
		\end{equation*}
	and
	\begin{equation*}
		\FM_l=\FX_l\delta_l.
	\end{equation*}

Therefore, $d_{il}$ is the $(i+1)$-th entry of the last column of $\FM_l^{-1}=\delta_l^{-1}\FX_l^{-1}$. Since $\FX_l$ is a Vandermonde matrix and $\delta_l$ is a diagonal matrix, we can compute that
\begin{equation*}
	d_{il}=(-q)^{-2in} \prod_{\substack{0 \leq m \leq l\\m \neq i}}\dfrac{1}{((-q)^{-i}-(-q)^{-m})}.
\end{equation*}
	The statement on $d_{il}/d_{i+1,l}$ follows from this.
\end{proof}

\begin{lemma}\label{lemma4.5}
	For $i=0$ and $n+1 \leq i \leq 2n$, $\CK_i=0$. Also, $d_{nn}\CK_n=(-q)^{-4n^2}$ and for $1 \leq l \leq n-1$, we have
	\begin{equation*}\begin{array}{ll}
		d_{n-l,n-l}\CK_{n-l}&=(-q)^{n+l}\dfrac{(-q)^{n}-(-q)^{l-1}}{(-q)^{l}-1}d_{n-l+1,n-l+1}\CK_{n-l+1}
		\end{array}
	\end{equation*}
\end{lemma}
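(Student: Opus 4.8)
The plan is to extract everything from the triangular system of Lemma~\ref{lemma4.3}, using the explicit formula of Lemma~\ref{lemma4.4} only at the very end for a telescoping computation.

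First, by Lemma~\ref{lemma4.4} every diagonal entry $d_{ii}=(-q)^{-2in}\prod_{m\ne i}((-q)^{-i}-(-q)^{-m})^{-1}$ is nonzero (the factors $(-q)^{-i}-(-q)^{-m}$ never vanish for $i\ne m$), so $\Delta$ is an invertible upper triangular matrix and the system of Lemma~\ref{lemma4.3} can be solved by back-substitution from the last coordinate. Its right-hand side vanishes in every position except the first ($=-(-q)^{-2n^2}$) and the $(n+1)$-st ($=(-q)^{-4n^2}$). Hence rows $2n,2n-1,\dots,n+1$ have right-hand side $0$, and dividing successively by the nonzero $d_{ii}$ forces $\CK_{2n}=\CK_{2n-1}=\cdots=\CK_{n+1}=0$; row $n$ then reads $d_{nn}\CK_n=(-q)^{-4n^2}$. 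For the last vanishing statement $\CK_0=0$ I would instead use \eqref{eq.4.1.7}: by \eqref{eq.4.1.5} the matrix $(\CA_{jl})$ is lower triangular with $\CA_{ll}=1$, so its $0$-th row reads $\CK_0=r_0$, where $r_0$ is the $0$-th entry of the right-hand side of \eqref{eq.4.1.7}, namely $0$. This already gives both displayed vanishing assertions and the value $d_{nn}\CK_n=(-q)^{-4n^2}$.

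For the recursion, fix $l$ with $1\le l\le n-1$ and put $i=n-l$. Since $\CK_j=0$ for $j>n$, row $i$ of the system of Lemma~\ref{lemma4.3} (whose right-hand side is $0$ because $1\le i\le n-1$) reads
\[
	d_{ii}\CK_i=-\sum_{j=i+1}^{n}d_{ij}\CK_j .
\]
I would argue by descending induction on $i$: assuming the recursion known for all indices $>i$, each $d_{jj}\CK_j$ with $i<j\le n$ is an explicit multiple of $d_{i+1,i+1}\CK_{i+1}$, and substituting this together with the ratios $d_{ij}/d_{jj}$ coming from Lemma~\ref{lemma4.4} turns the right-hand side into a finite alternating sum of products of powers of $-q$. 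Writing $u=-q$ and $t_m=u^{-m}$, this sum telescopes, and after simplification one obtains $d_{ii}\CK_i=u^{\,3n-2i-1}\frac{u^{\,i+1}-1}{u^{\,n-i}-1}\,d_{i+1,i+1}\CK_{i+1}$, which is exactly $(-q)^{n+l}\frac{(-q)^{n}-(-q)^{l-1}}{(-q)^{l}-1}\,d_{n-l+1,n-l+1}\CK_{n-l+1}$ upon substituting $l=n-i$.

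An equivalent and perhaps cleaner route: once $\CK_j=0$ for $j>n$ is known, rows $1,\dots,n$ of the system of Lemma~\ref{lemma4.3} form an upper triangular $n\times n$ system in $\CK_1,\dots,\CK_n$ whose matrix factors as $\diag(d_{11},\dots,d_{nn})\,U$ with $U_{ij}=\prod_{m=i+1}^{j}(t_i-t_m)^{-1}$; inverting and using the elementary partial-fraction identity $(U^{-1})_{in}=(-1)^{n-i}\prod_{m=i}^{n-1}(t_m-t_n)^{-1}$ gives the closed form $d_{ii}\CK_i=(-q)^{-4n^2}(t_i/t_n)^{2n}\prod_{m=0}^{i-1}\frac{t_n-t_m}{t_i-t_m}$, from which the recursion follows by the telescoping of $\prod_{m=0}^{i-1}\frac{t_{i+1}-t_m}{t_i-t_m}$. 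Either way, the one genuinely computational point --- and the step I expect to be the main obstacle --- is verifying that this alternating sum (equivalently, this product over $m$) collapses to the stated two-term ratio; everything else is elementary linear algebra with triangular matrices.
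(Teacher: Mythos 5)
Your overall route is the same as the paper's: everything is read off from the upper triangular system of Lemma \ref{lemma4.3} by back-substitution, giving $\CK_{n+1}=\dots=\CK_{2n}=0$ and $d_{nn}\CK_n=(-q)^{-4n^2}$, and the rows with vanishing right-hand side then produce the recursion by descending induction. One genuine difference is your treatment of $\CK_0$: you read it off from \eqref{eq.4.1.7}, using that \eqref{eq.4.1.5} says $(\CA_{kl})$ is lower triangular with $\CA_{ll}=1$ and that the first entry of the right-hand side of \eqref{eq.4.1.7} is $0$. That is correct and in fact cleaner than the paper, which goes back to the first row of the $\Delta$-system and therefore has to evaluate the sum $d_{01}\CK_1+\dots+d_{0n}\CK_n$ a second time. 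Your second, closed-form route (inverting the unitriangular factor $U$ of the $n\times n$ block) is also a genuinely different organization of the computation, and your asserted formulas check out: with $t_m=(-q)^{-m}$ one has $d_{ii}\CK_i=(-q)^{-4n^2}\,(d_{ii}/d_{nn})\,(U^{-1})_{in}$, which gives your product formula, and the ratio of consecutive values is $(-q)^{2n}\cdot(-q)^{-i}\frac{1-(-q)^{i+1}}{1-(-q)}\cdot(-q)^{n-i-1}\frac{1-(-q)}{1-(-q)^{n-i}}=(-q)^{3n-2i-1}\frac{(-q)^{i+1}-1}{(-q)^{n-i}-1}$, which is exactly $(-q)^{n+l}\frac{(-q)^{n}-(-q)^{l-1}}{(-q)^{l}-1}$ after substituting $l=n-i$.

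The one real gap is the point you flag yourself: neither route is complete without verifying a concrete identity, and that verification is essentially the entire content of the paper's proof. The paper reduces the induction step to $\sum_{1\le j\le k}(-1)^{j-1}\prod_{1\le m\le j}\frac{(-q)^{-m+1}-(-q)^{-k}}{1-(-q)^{-m}}=1$ and proves it by collapsing the sum from the top two terms downward; in your second route the analogous point is the partial-fraction formula $(U^{-1})_{in}=(-1)^{n-i}\prod_{m=i}^{n-1}(t_m-t_n)^{-1}$, equivalently $\sum_{j=i}^{n}(-1)^{n-j}\bigl(\prod_{m=i+1}^{j}(t_i-t_m)\,\prod_{m=j}^{n-1}(t_m-t_n)\bigr)^{-1}=0$ for $i<n$, which can be proved by induction on $n-i$ (or by the same pairing-off telescoping the paper uses). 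So the structure and the target formulas in your proposal are right, but the lemma is not proved until this finite computation is actually carried out.
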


\begin{proof}
	By Lemma \ref{lemma4.3}, we have
	\begin{equation*}
		\Delta\left(\begin{array}{c}
			\CK_0\\
			\CK_1\\
			\dots \\
			\CK_{n}\\
			\dots\\
			\CK_{2n}
		\end{array}\middle)=\middle(\begin{array}{c}
			-(-q)^{-2n^2}\\
			0\\
			0\\
			(-q)^{-4n^2}\\
			0\\
			0
		\end{array} \right)\begin{array}{l} 
			\text{1st entry}\\
			\\
			\\
			\text{(n+1)-th entry}
			\\
			\\
			\\
		\end{array}
	\end{equation*}
Since $\Delta$ is an upper triangular matrix, we have that
$\CK_i=0$ for $n+1 \leq i \leq 2n$. Also, the $(n+1)$-th row of the matrix implies that
\begin{equation*}
	d_{nn}\CK_n+d_{n,n+1}\CK_{n+1}+\dots+d_{n,2n}\CK_{2n}=(-q)^{-4n^2},
\end{equation*}
and hence $d_{nn}\CK_n=(-q)^{-4n^2}$.

The $n$-th row of the matrix implies that
\begin{equation*}
	d_{n-1,n-1}\CK_{n-1}+d_{n-1n}\CK_n=0.
\end{equation*}

From Lemma \ref{lemma4.4}, we know that
\begin{equation*}
	d_{n-1,n}\CK_n=\dfrac{d_{n-1,n}}{d_{nn}}d_{nn}\CK_n=-(-q)^{n+1}\dfrac{(-q)^n-1}{(-q)-1}d_{nn}\CK_n.
\end{equation*}
Therefore, we have
\begin{equation*}
	d_{n-1,n-1}\CK_{n-1}=(-q)^{n+1}\dfrac{(-q)^n-1}{(-q)-1}d_{nn}\CK_n.
\end{equation*}

Now assume that
\begin{equation}\label{eq4.1.9}
	d_{n-j,n-j}\CK_{n-j}=(-q)^{n+j}\dfrac{(-q)^{n}-(-q)^{j-1}}{(-q)^{j}-1}d_{n-j+1,n-j+1}\CK_{n-j+1}.
\end{equation}
for $1 \leq j \leq k-1$.

Then, by the $(n-k+1)$-th row of the matrix, we have
\begin{equation*}
	d_{n-k,n-k}\CK_{n-k}+d_{n-k,n-k+1}\CK_{n-k+1}+\dots+d_{n-k,n}\CK_{n}=0.
\end{equation*}

By using \eqref{eq4.1.9}, we have
\begin{equation*}
	\begin{array}{l}
	d_{n-k,n-k}\CK_{n-k}\\
	=-d_{n-k,n-k+1}\CK_{n-k+1}-\dots-d_{n-k,n}\CK_{n}\\
	=-\dfrac{d_{n-k,n-k+1}}{d_{n-k+1,n-k+1}}d_{n-k+1,n-k+1}\CK_{n-k+1}\\
	-\dfrac{d_{n-k,n-k+2}}{d_{n-k+1,n-k+2}}\dfrac{d_{n-k+1,n-k+2}}{d_{n-k+2,n-k+2}}d_{n-k+2,n-k+2}\CK_{n-k+2}\\
	\dots\\
	=d_{n-k+1,n-k+1}\CK_{n-k+1} \times\\
	\mathlarger{\mathlarger{\mathlarger{\mathlarger{\lbrace}}}} (-q)^{n+1}\dfrac{(-q)^n-(-q)^{k-1}}{(-q)-1}\\
	-(-q)^{2(n+1)}\dfrac{(-q)^n-(-q)^{k-1}}{(-q)-1}\dfrac{(-q)^n-(-q)^{k-2}}{(-q)^2-1}\dfrac{(-q)^{k-1}-1}{(-q)^n-(-q)^{k-2}}\dfrac{1}{(-q)^{n+k-1}}
	\\
	\dots \mathlarger{\mathlarger{\mathlarger{\mathlarger{\rbrace}}}}
\end{array}
\end{equation*}
\begin{equation*}
\begin{array}{ll}
	=d_{n-k+1,n-k+1}\CK_{n-k+1} \times (-q)^{n+k}\dfrac{(-q)^n-(-q)^{k-1}}{(-q)^k-1}\times &\quad\quad\quad\quad\quad\quad\quad\quad\\
	\mathlarger{\mathlarger{\mathlarger{\mathlarger{\lbrace}}}} 
	\dfrac{(-q)^k-1}{(-q)^{k-1}((-q)-1)}-\dfrac{((-q)^k-1)((-q)^{k-1}-1)}{(-q)^{2k-3}((-q)-1)((-q)^2-1)}\\
	+\dfrac{((-q)^k-1)((-q)^{k-1}-1)((-q)^{k-2}-1)}{(-q)^{3k-6}((-q)-1)((-q)^2-1)((-q)^3-1)}-
	\\
	\dots \mathlarger{\mathlarger{\mathlarger{\mathlarger{\rbrace}}}}&\\
	=d_{n-k+1,n-k+1}\CK_{n-k+1} \times (-q)^{n+k}\dfrac{(-q)^n-(-q)^{k-1}}{(-q)^k-1}\times&\\
	\mathlarger{\sum}_{1 \leq j\leq k}(-1)^{j-1}\mathlarger{\prod}_{1 \leq m \leq j} \dfrac{((-q)^{-m+1}-(-q)^{-k})}{(1-(-q)^{-m})}&.
	\end{array}
\end{equation*}

Now, it suffices to show that
\begin{equation*}
\mathlarger{\sum}_{1 \leq j\leq k}(-1)^{j-1}\mathlarger{\prod}_{1 \leq m \leq j} \dfrac{((-q)^{-m+1}-(-q)^{-k})}{(1-(-q)^{-m})}=1
\end{equation*}

Note that the sum of $k-1$-th and $k$-th term is
\begin{equation*}\begin{array}{l}
	(-1)^{k-2}\mathlarger{\lbrace}\mathlarger{\prod}_{1 \leq m \leq k-1} \dfrac{((-q)^{-m+1}-(-q)^{-k})}{(1-(-q)^{-m})}-\mathlarger{\prod}_{1 \leq m \leq k} \dfrac{((-q)^{-m+1}-(-q)^{-k})}{(1-(-q)^{-m})}\mathlarger{\rbrace}\\
	=(-1)^{k-2}\mathlarger{\lbrace}\mathlarger{\prod}_{1 \leq m \leq k-1} \dfrac{((-q)^{-m+1}-(-q)^{-k})}{(1-(-q)^{-m})}(1-\dfrac{((-q)^{-k+1}-(-q)^{-k})}{(1-(-q)^{-k})})\mathlarger{\rbrace}\\
	=(-1)^{k-2}\mathlarger{\lbrace}\mathlarger{\prod}_{1 \leq m \leq k-1} \dfrac{((-q)^{-m+1}-(-q)^{-k})}{(1-(-q)^{-m})}(\dfrac{(1-(-q)^{-k+1})}{(1-(-q)^{-k})})\mathlarger{\rbrace}\\
	=(-1)^{k-2}\mathlarger{\lbrace}\mathlarger{\prod}_{1 \leq m \leq k-2} \dfrac{((-q)^{-m+1}-(-q)^{-k})}{(1-(-q)^{-m})}(\dfrac{((-q)^{-k+2}-(-q)^{-k})}{(1-(-q)^{-k})})\mathlarger{\rbrace}.
	\end{array}
\end{equation*}

Then, if we add $k-2$-th term, we have
\begin{equation*}\begin{array}{l}
	(-1)^{k-3}\mathlarger{\lbrace}\mathlarger{\prod}_{1 \leq m \leq k-2} \dfrac{((-q)^{-m+1}-(-q)^{-k})}{(1-(-q)^{-m})}(1-\dfrac{((-q)^{-k+2}-(-q)^{-k})}{(1-(-q)^{-k})})\mathlarger{\rbrace}\\
	=(-1)^{k-3}\mathlarger{\lbrace}\mathlarger{\prod}_{1 \leq m \leq k-2} \dfrac{((-q)^{-m+1}-(-q)^{-k})}{(1-(-q)^{-m})}(\dfrac{(1-(-q)^{-k+2})}{(1-(-q)^{-k})})\mathlarger{\rbrace}\\
	=(-1)^{k-3}\mathlarger{\lbrace}\mathlarger{\prod}_{1 \leq m \leq k-3} \dfrac{((-q)^{-m+1}-(-q)^{-k})}{(1-(-q)^{-m})}(\dfrac{((-q)^{-k+3}-(-q)^{-k})}{(1-(-q)^{-k})})\mathlarger{\rbrace}.
	\end{array}
\end{equation*}

If we continue this, we have
\begin{equation*}
	(-1)^{1-1}\dfrac{(1-(-q)^{-k})}{(1-(-q)^{-1})}\dfrac{(1-(-q)^{-1})}{(1-(-q)^{-k})}=1
\end{equation*}

For $\CK_0$, we can use the first row of the matrix
\begin{equation*}
	d_{00}\CK_0+d_{01}\CK_1+\dots+d_{0n}\CK_n=-(-q)^{-2n^2},
\end{equation*}
and as above, we can show that $\CK_0=0$. This finishes the proof of the lemma.
\end{proof}

\begin{lemma}\label{lemma4.6}
	Let $B$ be a diagonal matrix
	\begin{equation*}
		B=\left( \begin{array}{llll}
			\pi^{a_1}1_{k_1} &&&\\
			& \pi^{a_2}1_{k_2}& &\\
				&& \ddots &\\
				&&&\pi^{a_t} 1_{k_t}
		\end{array}\right),
	\end{equation*}
where $a_1 > a_2 > \dots >a_t$. For $1 \leq i  \leq t$, let $n_i=\sum_{1 \leq j \leq i} k_j$. Then, we have
\begin{equation*}
	\alpha(B,B)=\prod_{1 \leq j \leq t}q^{a_j(n_j^2-n^2_{j-1})}\prod_{1 \leq i \leq t}\lbrace \prod_{1 \leq m \leq k_i}(1-(-q)^{-m})\rbrace.
\end{equation*}

\end{lemma}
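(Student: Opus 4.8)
The plan is to prove the identity by induction on $t$, the number of distinct Jordan invariants of $B$, lowering the rank at each stage by splitting off the Jordan constituent $\pi^{a_t}1_{k_t}$ of smallest valuation. Recall $\alpha(B,B)=\lim_{d\to\infty}(q^{-d})^{n^2}\lvert\FA_d(B,B)\rvert$, where $\FA_d(B,B)=\{x\in M_{n,n}(O_E/\pi^d)\mid {}^tx^*Bx\equiv B\ (\Mod \pi^d)\}$. The base case is $B=1_n$: then $\FA_d(1_n,1_n)$ is the set of $O_F/\pi^d$-points of the unitary group $U_n$, which is smooth over $O_F$ (as $p$ is odd), so $\lvert\FA_d(1_n,1_n)\rvert=q^{dn^2}\prod_{m=1}^n(1-(-q)^{-m})$ and hence $\alpha(1_n,1_n)=\prod_{m=1}^n(1-(-q)^{-m})$. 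For the inductive step I would first reduce to $a_t=0$: from the equivalence ${}^tx^*Bx\equiv B\ (\Mod \pi^d)\Longleftrightarrow {}^tx^*(\pi^{-a_t}B)x\equiv \pi^{-a_t}B\ (\Mod \pi^{d-a_t})$ one gets $\alpha(B,B)=q^{a_t n^2}\alpha(\pi^{-a_t}B,\pi^{-a_t}B)$, and since $q^{a_t n^2}=q^{a_t n_t^2}=q^{a_t\sum_{j=1}^{t}(n_j^2-n_{j-1}^2)}$ (with $n_0=0$), the formula claimed for $B$ is equivalent to the one claimed for $\pi^{-a_t}B$. So we may assume $a_t=0$, i.e. $B=\left(\begin{smallmatrix}B' & 0\\ 0 & 1_{k_t}\end{smallmatrix}\right)$, where $B'$ is the block of the first $t-1$ constituents and $\pi\mid B'$.

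Next I would carry out the block computation. Write $x=\left(\begin{smallmatrix}X & Y\\ Z & W\end{smallmatrix}\right)$ in blocks of sizes $n_{t-1}$ and $k_t$. Then ${}^tx^*Bx\equiv B\ (\Mod \pi^d)$ unravels into (i) ${}^tX^*B'X+{}^tZ^*Z\equiv B'$, (ii) ${}^tX^*B'Y+{}^tZ^*W\equiv 0$, (iii) ${}^tY^*B'Y+{}^tW^*W\equiv 1_{k_t}$. Since $\pi\mid B'$, (iii) forces $W$ invertible mod $\pi$; then (ii) determines $Z$ modulo $\pi^d$ by ${}^tZ^*\equiv -{}^tX^*B'YW^{-1}$ (so $\pi\mid Z$ too), and substituting into (i) turns it into ${}^tX^*(B'+{}^tM^*M)X\equiv B'$ with $M=-{}^t(W^{-1})^*{}^tY^*B'$ depending only on $(Y,W)$; meanwhile (iii) has become a constraint on $(Y,W)$ alone. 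Counting in the order $Y,W,X$ (with $Z$ then forced): $Y$ ranges freely over $M_{n_{t-1},k_t}(O_E/\pi^d)$, contributing $q^{2dn_{t-1}k_t}$; for each $Y$ the hermitian matrix $1_{k_t}-{}^tY^*B'Y$ is unimodular, hence isometric over $O_E$ to $1_{k_t}$, so $\{W:{}^tW^*W\equiv 1_{k_t}-{}^tY^*B'Y\}$ has cardinality $\lvert\FA_d(1_{k_t},1_{k_t})\rvert=q^{dk_t^2}\prod_{m=1}^{k_t}(1-(-q)^{-m})$; and for each $(Y,W)$, granting that $B'+{}^tM^*M$ is isometric to $B'$ over $O_E$, the set $\{X:{}^tX^*(B'+{}^tM^*M)X\equiv B'\}$ has cardinality $\lvert\FA_d(B',B')\rvert$. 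Multiplying, dividing by $q^{dn^2}$, and letting $d\to\infty$ (using $n^2-2n_{t-1}k_t-k_t^2=n_{t-1}^2$) gives $\alpha(B,B)=\alpha(B',B')\prod_{m=1}^{k_t}(1-(-q)^{-m})$. Applying the induction hypothesis to $B'$ (invariants $a_1>\dots>a_{t-1}$, partial sums $n_1,\dots,n_{t-1}$) and then undoing the scaling by $\pi^{a_t}$ produces the asserted formula, the $j=t$ factor $q^{a_t(n_t^2-n_{t-1}^2)}$ being equal to $1$ after the normalization $a_t=0$.

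The one substantive point — and the main obstacle — is the isometry claim used twice above: a unimodular hermitian matrix over $O_E$ is isometric to the standard one (true since $E/F$ is unramified, so the norm $O_E^\times\to O_F^\times$ is surjective), and, more seriously, $B'+{}^tM^*M\cong B'$. Since $\pi\mid M$, a short computation gives ${}^tM^*M=B'KB'$ for some hermitian $K\in M_{n_{t-1},n_{t-1}}(O_E)$, so within each Jordan block $\pi^{a_j}1_{k_j}$ of $B'$ the perturbation has shape $\pi^{a_j}(1_{k_j}+\pi^{a_j}(\cdots))$, while the entries between two different blocks are of strictly higher valuation than the coarser of the two; hence $B'$ and $B'+{}^tM^*M$ share the same Jordan splitting data and (again using that $p$ is odd and $E/F$ is unramified) are isometric over $O_E$ — either by the classification of hermitian $O_E$-lattices via Jordan splittings, or by a direct Hensel/Schur-complement reduction clearing one block at a time, each unimodular constituent being rigid. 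I expect this perturbation lemma, together with keeping the Jordan bookkeeping consistent through the scaling and the induction, to be the only real work; the rest is elementary counting. (Alternatively one could extract Lemma~\ref{lemma4.6} from Hironaka's explicit local density formulas \cite{Hir}, but the argument above is self-contained.)
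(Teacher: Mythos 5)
Your argument is correct, and its overall skeleton (scale by $\pi^{-a_t}$ using $\alpha(B,B)=q^{a_tn_t^2}\alpha(\pi^{-a_t}B,\pi^{-a_t}B)$, split off the unimodular Jordan constituent, induct on $t$) coincides with the paper's; the difference is in how the splitting step is justified. The paper simply cites the factorization $\alpha(\pi^{-a_t}B,\pi^{-a_t}B)=\alpha(\pi^{-a_t}B,1_{k_t})\,\alpha(B_1,B_1)$ from \cite[Corollary 9.12]{KR2} and then evaluates $\alpha(\pi^{-a_t}B,1_{k_t})=\prod_{1\leq m\leq k_t}(1-(-q)^{-m})$ via \cite[Theorem II]{Hir2} and \cite[Proposition A.5]{Cho2}, whereas you prove the equivalent recursion $\alpha(B,B)=\alpha(B',B')\,\alpha(1_{k_t},1_{k_t})$ by a direct block count in $\FA_d(B,B)$, with the base value $\alpha(1_{k},1_{k})=\prod_{m=1}^{k}(1-(-q)^{-m})$ obtained from smoothness of the unitary group scheme over $O_F$. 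Your route is self-contained and makes transparent where each factor comes from, at the cost of needing the classification of hermitian $O_E$-lattices twice; the paper's route is shorter but leans on external density identities. One remark on the step you rightly single out as the crux: the isometry $B'+{}^tM^*M\cong B'$ is most cleanly seen by writing ${}^tM^*M=B'KB'$ with $K=Y({}^tW^*W)^{-1}\,{}^tY^*$ hermitian and integral, so that $B'+{}^tM^*M=B'(1_{n_{t-1}}+KB')$ with $1_{n_{t-1}}+KB'\in GL_{n_{t-1}}(O_E)$ (as $\pi\mid B'$); hence the two hermitian matrices have the same elementary divisors, and since in the unramified case the $A_\lambda$ form a complete set of representatives of $GL(O_E)$-equivalence classes of hermitian matrices (as recalled in Section \ref{subsection3.1}), they are congruent. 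This replaces your block-by-block valuation bookkeeping with a one-line argument and removes the only soft spot in the write-up.
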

\begin{proof} First, we have that
	\begin{equation*}
		\alpha(B,B)=q^{a_tn_t^2}\alpha(\pi^{-a_t}B,\pi^{-a_t}B).
	\end{equation*}
Then by \cite[Corollary 9.12]{KR2}, we have that
\begin{equation*}
	q^{a_tn_t^2}\alpha(\pi^{-a_t}B,\pi^{-a_t}B)=q^{a_tn_t^2}\alpha(\pi^{-a_t}B,1_{a_t})\alpha(B_1,B_1),
\end{equation*}
where $B_1$ is the matrix
\begin{equation*}
		B_1=\left( \begin{array}{llll}
		\pi^{a_1-a_t}1_{k_1} &&&\\
		& \pi^{a_2-a_t}1_{k_2}& &\\
		&& \ddots &\\
		&&&\pi^{a_{t-1}-a_t} 1_{k_{t-1}}
	\end{array}\right).
\end{equation*}

One can prove that for a $k \times k$ integral hermitian matrix $C$,
\begin{equation*}
	\alpha(\left(\begin{array}{ll}
		\pi C & 0\\
		0 & 1_{m-k}
	\end{array}\right),1_{n})=\alpha(\left(\begin{array}{ll}
	\pi 1_{k} & 0\\
	0 & 1_{m-k}
\end{array}\right),1_{n}).
\end{equation*}
This can be proved by using \cite[Theorem II]{Hir2} since for any $\eta \in \CR_{m}^{0+}$, $\alpha(A_{\eta},1_n)$ depends only on $m-E_0(\eta)$ ($\eta_1'$ in the notation in loc. cit.).

Therefore, by \cite[Proposition A.5]{Cho2},
\begin{equation*}
\alpha(\pi^{-a_t}B,1_{a_t})=\alpha(\left(\begin{array}{ll}
	\pi 1_{n_{t-1}} & 0\\
	0& 1_{k_t}
	\end{array}\right),1_{k_t})=\prod_{1 \leq m \leq k_t} (1-(-q)^{-m}).
\end{equation*}
This implies that
\begin{equation*}
\alpha(B,B)=q^{a_tn_t^2}\prod_{1 \leq m \leq k_t} (1-(-q)^{-m})\alpha(B_1,B_1),
\end{equation*}
Now, the lemma follows inductively from this.
\end{proof}

Now we computed all constants in \eqref{eq4.1.8}. Since we want to compute $D_{\alpha}$ such that
\begin{equation*}
\dfrac{W'_{0,n}(B,0)}{W_{n,n}(A_n,0)}=\mathlarger{\sum}_{\alpha \in \CR_n^{0+}}D_{\alpha} \dfrac{\alpha(A_{\alpha},B)}{\alpha(A_{\alpha},A_{\alpha})},
\end{equation*}
we need to count all $\lambda$-terms in \eqref{eq4.1.8} such that $\lambda_i^+=\alpha$ for some $i$.

Let us define the following notation. For $\lambda=(\overline{\lambda},\overset{E_1(\lambda)}{\overbrace{1,\dots,1}},\overset{E_0(\lambda)}{\overbrace{0,\dots,0}}) \in D_k$ and $0 \leq p \leq E_1(\lambda)$, we define
\begin{equation*}
\lambda_{p}^{-}=(\overline{\lambda},\overset{E_1(\lambda)-p}{\overbrace{1,\dots,1}},\overset{E_0(\lambda)+p}{\overbrace{0,\dots,0}}),
\end{equation*}
by replacing $p$ 1's by $p$ zeros.

Then we have the following theorem.

\begin{theorem}\label{theorem4.7}
	For $\lambda \in \CR_{2n}^{0+}$, we have
	\begin{equation*}\begin{array}{ll}
			D_{\lambda}&=\dfrac{q^{n^2}\prod_{l=n+1}^{2n}(1-(-q)^{-l})}{\prod_{l=1}^{n}(1-(-q)^{-l})}C_{\lambda}\\\\
			&+\mathlarger{\sum}_{\substack{1 \leq i \leq n\\ \max\lbrace i-E_0(\lambda),0 \rbrace  \leq p \\ \leq \min\lbrace i , E_1(\lambda) \rbrace}}C_{(\lambda_p^-)^{\vee_i}}
			\times \dfrac{\prod_{l=1}^{2n-i}(1-(-q)^{-l})}{q^{-3n^2}(\prod_{l=1}^{n}(1-(-q)^{-l}))^2}\\\\
			&\times
			(-q)^{((n+1)(i-p)+(3n-i+1)(n-i)/2-4n^2)}(-1)^{i-p}\\
			&\times \dfrac{\prod_{j=1}^{n-p}((-q)^n-(-q)^{j-1})}{\prod_{j=1}^{i-p}((-q)^j-1)\prod_{j=1}^{n-i}((-q)^j-1)}.
			\\\\
			&\times
			\dfrac{\prod_{l=1}^{l=E_0(\lambda)}(1-(-q)^{-l})\prod_{l=1}^{l=E_1(\lambda)}(1-(-q)^{-l})}{\prod_{l=1}^{l=E_0(\lambda)-i+p}(1-(-q)^{-l})\prod_{l=1}^{l=E_1(\lambda)-p}(1-(-q)^{-l})} \times q^{p(4n-2E_0(\lambda)-p)}.
		\end{array}
	\end{equation*}

Here, we choose the following convention: For $k \leq 0$, we assume that
\begin{equation*}
	\prod_{l=1}^{l=k} (*)=1.
\end{equation*}

In particular, $D_{\lambda}$ depends only on $n$, $E_0(\lambda)$, $E_1(\lambda)$, and the parity of $\sum_{i}\lambda_i$.
\end{theorem}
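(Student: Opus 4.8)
The plan is to return to the identity \eqref{eq4.1.8}. Adding $(-q)^{-2n^2}\CF_0'(Y,A_0^{[0]})$ to both sides and expanding the latter by Propositions \ref{proposition3.3} and \ref{proposition3.4} applied with $2n$ in place of $n$ (which express $\CF_0'(Y,A_0^{[0]})$, i.e. $\sum_j\min(0,e_j)f(Y)$, as $\sum_{\mu\in\CR_{2n}^{0+}}C_{\mu}\frac{\alpha(1_{2n},1_{2n})}{\alpha(A_{\mu},A_{\mu})}\CF_0(Y,A_{\mu})$), one obtains an expansion $\CF_0'(Y,A_n^{[0]})=\sum_{\lambda\in\CR_{2n}^{0+}}E_{\lambda}\CF_0(Y,A_{\lambda})$ with explicit coefficients $E_{\lambda}$. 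Applying Proposition \ref{proposition3.1} (with $2n$ in place of $n$) term by term then gives $W'_{0,n}(B,0)=\sum_{\lambda}E_{\lambda}\alpha(A_{\lambda},B)$, whence $D_{\lambda}=E_{\lambda}\,\alpha(A_{\lambda},A_{\lambda})/W_{n,n}(A_n,0)$; so the theorem reduces to computing $E_{\lambda}$, and then this ratio, in closed form.

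To compute $E_{\lambda}$, fix $\lambda=(\overline{\lambda},1^{E_1(\lambda)},0^{E_0(\lambda)})$ and collect every term of \eqref{eq4.1.8} that contributes a multiple of $\CF_0(\cdot,A_{\lambda})$. Since $\mu\mapsto\mu_{p}^{+}$ only promotes $p$ zeros of $\mu$ to $1$'s and leaves all entries $\geq 2$ fixed, the equality $\mu_{p}^{+}=\lambda$ forces $\mu=\lambda_{p}^{-}$; writing $i$ for the level in \eqref{eq4.1.8} and $p$ for its inner index, the constraints $0\leq p\leq i$ and $\mu\in\bigcup_{k\geq i}\FD_k$ (i.e. $E_0(\lambda)+p\geq i$), together with $\CK_i=0$ unless $1\leq i\leq n$ from Lemma \ref{lemma4.5} and the requirement $p\leq E_1(\lambda)$, reproduce precisely the index set $1\leq i\leq n$, $\max\{i-E_0(\lambda),0\}\leq p\leq\min\{i,E_1(\lambda)\}$ of the theorem, each such $(i,p)$ contributing through the unique $\mu=\lambda_p^-$. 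Adding the contribution of $(-q)^{-2n^2}\CF_0'(Y,A_0^{[0]})$ yields
\begin{equation*}
	E_{\lambda}=\dfrac{C_{\lambda}\,\alpha(1_{2n},1_{2n})}{(-q)^{2n^2}\,\alpha(A_{\lambda},A_{\lambda})}+\mathlarger{\sum}_{i,p}\CK_i\,d_{pi}\,C_{(\lambda_p^-)^{\vee_i}}\,\dfrac{\alpha(1_{2n-i},1_{2n-i})}{\alpha(A_{(\lambda_p^-)^{\vee_i}},A_{(\lambda_p^-)^{\vee_i}})},
\end{equation*}
where $d_{pi}$ (first index the inner index $p$, second index the level $i$) and $\CK_i$ are as in \eqref{eq4.1.8}.

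Next I multiply by $\alpha(A_{\lambda},A_{\lambda})/W_{n,n}(A_n,0)$ and substitute the closed forms. One uses $\alpha(1_m,1_m)=\prod_{l=1}^{m}(1-(-q)^{-l})$ and the value $W_{n,n}(A_n,0)=q^{-3n^2}\bigl(\prod_{l=1}^{n}(1-(-q)^{-l})\bigr)^2$, a direct computation (for $n=1$ it recovers the $(q+1)^2/q^5$ of Proposition \ref{proposition4.2}); this turns the first term of $E_{\lambda}$ into $\frac{q^{n^2}\prod_{l=n+1}^{2n}(1-(-q)^{-l})}{\prod_{l=1}^{n}(1-(-q)^{-l})}C_{\lambda}$. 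The sum then requires two simplifications. First, by Lemma \ref{lemma4.6} applied to $A_{\lambda}$ and to $A_{(\lambda_p^-)^{\vee_i}}$: these matrices share the same blocks of entries $\geq 2$, so those block factors cancel in the ratio, leaving only $q^{p(4n-2E_0(\lambda)-p)}$ times the quotient of $\prod(1-(-q)^{-l})$'s over the $0$- and $1$-blocks, which is the last line of the formula. Second, writing $\CK_i\,d_{pi}=(d_{pi}/d_{ii})(d_{ii}\CK_i)$: the recursion of Lemma \ref{lemma4.5} telescopes to $d_{ii}\CK_i=(-q)^{(n-i)(3n-i+1)/2-4n^2}\prod_{m=1}^{n-i}\frac{(-q)^n-(-q)^{m-1}}{(-q)^m-1}$, while Lemma \ref{lemma4.4} gives $d_{pi}/d_{ii}=(-1)^{i-p}(-q)^{(n+1)(i-p)}\bigl(\prod_{j=n-i+1}^{n-p}((-q)^n-(-q)^{j-1})\bigr)\big/\prod_{u=1}^{i-p}((-q)^u-1)$, and the two ``$(-q)^n-(-q)^{j-1}$'' products merge into $\prod_{j=1}^{n-p}((-q)^n-(-q)^{j-1})$; this gives the third and fourth lines. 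The remaining factor $\alpha(1_{2n-i},1_{2n-i})/W_{n,n}(A_n,0)$ is the second factor on the second line, and $C_{(\lambda_p^-)^{\vee_i}}$ the first; assembling everything produces the stated expression for $D_{\lambda}$. For the final assertion one reads off dependencies: after the cancellation above, the self-density ratio, the quantities $\alpha(1_{2n-i},1_{2n-i})$, $W_{n,n}(A_n,0)$ and $\CK_i d_{pi}$ depend only on $n,i,p,E_0(\lambda),E_1(\lambda)$, while by Propositions \ref{proposition3.3}, \ref{proposition3.4} the constants $C_{\lambda}$, $C_{(\lambda_p^-)^{\vee_i}}$ depend only on $2n$, the numbers of nonzero entries ($2n-E_0(\lambda)$, resp. $2n-E_0(\lambda)-p$) and the parities of $\sum_j\lambda_j$, resp. $\sum_j\lambda_j-p$ --- all determined by $n$, $E_0(\lambda)$, the parity of $\sum_j\lambda_j$, and the summation indices; summing over $i,p$ gives the claim.

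The main obstacle is the bookkeeping of the middle step: matching the data (level, element $\mu$, inner index) of \eqref{eq4.1.8} producing a fixed $\lambda$ with the pair $(i,p)$ of the theorem, pinning down the exact range of $p$, and then checking that the two telescoping products --- for $\CK_i\,d_{pi}$ and for the self-density ratio --- collapse exactly as claimed. There is no conceptual difficulty beyond \eqref{eq4.1.8} and the lemmas already in place, but the index alignment and the cancellations must be carried out with care.
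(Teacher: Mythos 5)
Your proposal is correct and follows essentially the same route as the paper: extract the coefficient of $\CF_0(\cdot,A_{\lambda})$ from \eqref{eq4.1.8} (the index matching $\mu=\lambda_p^-$ with $1\leq i\leq n$, $\max\{i-E_0(\lambda),0\}\leq p\leq\min\{i,E_1(\lambda)\}$ is exactly how the paper restricts its sum), then substitute Lemmas \ref{lemma4.4}--\ref{lemma4.6} together with $\alpha(1_{2n-i},1_{2n-i})$ and $W_{n,n}(A_n,0)=q^{-3n^2}\bigl(\prod_{l=1}^{n}(1-(-q)^{-l})\bigr)^2$. The only cosmetic differences are that you carry out the telescoping for $\CK_i d_{pi}$ explicitly, which the paper merely asserts from Lemmas \ref{lemma4.4} and \ref{lemma4.5}, and that you assert the value of $W_{n,n}(A_n,0)$ directly where the paper cites \cite[Proposition 3.23]{Cho2}.
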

\begin{proof}
	We have the following identity from \eqref{eq4.1.8}.
	\begin{equation*}
		\begin{array}{ll}
		D_{\lambda}&=\dfrac{\alpha(1_{2n},1_{2n})C_{\lambda}}{(-q)^{2n^2}W_{n,n}(A_n,0)}\\
		&+\mathlarger{\sum}_{\substack{0 \leq i \leq 2n\\ 0 \leq p \leq 2n}}\dfrac{C_{(\lambda_p^-)^{\vee_i}}}{\alpha(A_{(\lambda_p^-)^{\vee_i}},A_{(\lambda_p^-)^{\vee_i}})}\alpha(1_{2n-i},1_{2n-i})\CK_id_{pi}\dfrac{\alpha(A_{\lambda},A_{\lambda})}{W_{n,n}(A_n,0)}.
		\end{array}
	\end{equation*}

Since $\CK_i=0$ for $i=0$ and $n+1 \leq i \leq 2n$, it suffices to consider $1 \leq i \leq n$. Also, $(\lambda_p^{-})^{\vee_i}$ is defined only when
\begin{equation*}
	\max\lbrace i-E_0(\lambda),0 \rbrace \leq p \leq \min\lbrace i , E_1(\lambda) \rbrace.
\end{equation*}
Therefore, we have
\begin{equation*}
		\begin{array}{l}
		D_{\lambda}=\dfrac{\alpha(1_{2n},1_{2n})C_{\lambda}}{(-q)^{2n^2}W_{n,n}(A_n,0)}\\
		+\mathlarger{\sum}_{\substack{1 \leq i \leq n\\ \max\lbrace i-E_0(\lambda),0 \rbrace  \leq p \\ \leq \min\lbrace i , E_1(\lambda) \rbrace}}\dfrac{C_{(\lambda_p^-)^{\vee_i}}}{\alpha(A_{(\lambda_p^-)^{\vee_i}},A_{(\lambda_p^-)^{\vee_i}})}\alpha(1_{2n-i},1_{2n-i})\CK_id_{pi}\dfrac{\alpha(A_{\lambda},A_{\lambda})}{W_{n,n}(A_n,0)}.
	\end{array}
\end{equation*}

By Lemma \ref{lemma4.6}, we have
\begin{equation*}
	\begin{array}{rl}
		\alpha(A_{\lambda},A_{\lambda})=&q^{(2n-E_0(\lambda))^2}\prod_{l=1}^{l=E_0(\lambda)}(1-(-q)^{-l})\\
		&\times \prod_{l=1}^{l=E_1(\lambda)}(1-(-q)^{-l})\alpha(\pi^{-1}A_{\overline{\lambda}},\pi^{-1}A_{\overline{\lambda}}),\\\\
		
		\alpha(A_{(\lambda_p^-)^{\vee_i}},A_{(\lambda_p^-)^{\vee_i}})=&q^{(2n-E_0(\lambda)-p)^2}\prod_{l=1}^{l=E_0(\lambda)-i+p}(1-(-q)^{-l})\\
		&\times \prod_{l=1}^{l=E_1(\lambda)-p}(1-(-q)^{-l})\alpha(\pi^{-1}A_{\overline{\lambda}},\pi^{-1}A_{\overline{\lambda}}),
		
	\end{array}
\end{equation*}
where
\begin{equation*}
	\lambda=(\overline{\lambda},\overset{E_1(\lambda)}{\overbrace{1,\dots,1}},\overset{E_0(\lambda)}{\overbrace{0,\dots,0}}).
\end{equation*}
Here, we choose the following convention: For $k \leq 0$, we assume that
\begin{equation*}
	\prod_{l=1}^{l=k} (*)=1.
\end{equation*}

Therefore,
\begin{equation*}\begin{array}{l}
	\dfrac{\alpha(A_{\lambda},A_{\lambda})}{\alpha(A_{(\lambda_p^-)^{\vee_i}},A_{(\lambda_p^-)^{\vee_i}})}\\
	=\dfrac{\prod_{l=1}^{l=E_0(\lambda)}(1-(-q)^{-l})\prod_{l=1}^{l=E_1(\lambda)}(1-(-q)^{-l})}{\prod_{l=1}^{l=E_0(\lambda)-i+p}(1-(-q)^{-l})\prod_{l=1}^{l=E_1(\lambda)-p}(1-(-q)^{-l})}\times q^{p(4n-2E_0(\lambda)-p)}.
	\end{array}
\end{equation*}

Also, by Lemma \ref{lemma4.4} and Lemma \ref{lemma4.5}, we have
\begin{equation*}
	\begin{array}{ll} 
	\CK_id_{pi}&=(-q)^{((n+1)(i-p)+(3n-i+1)(n-i)/2-4n^2)}(-1)^{i-p}\\
	&\times \dfrac{\prod_{j=1}^{n-p}((-q)^n-(-q)^{j-1})}{\prod_{j=1}^{i-p}((-q)^j-1)\prod_{j=1}^{n-i}((-q)^j-1)}.
	\end{array}
\end{equation*}

By Lemma \ref{lemma4.6} and \cite[Proposition 3.23]{Cho2}, we have
\begin{equation*}
	\begin{array}{ll}
	\alpha(1_{2n-i},1_{2n-i})=\prod_{l=1}^{2n-i}(1-(-q)^{-l}),\\
	W_{n,n}(A_n,0)=q^{-4n^2}\alpha(\pi A_n,1_n)\alpha(\pi 1_n, \pi 1_n)=q^{-3n^2}(\prod_{l=1}^{n}(1-(-q)^{-l}))^2.
	\end{array}
\end{equation*}

Combining these formulas, we have
\begin{equation*}\begin{array}{ll}
	D_{\lambda}&=\dfrac{q^{n^2}\prod_{l=n+1}^{2n}(1-(-q)^{-l})}{\prod_{l=1}^{n}(1-(-q)^{-l})}C_{\lambda}\\\\
	&+\mathlarger{\sum}_{\substack{1 \leq i \leq n\\ \max\lbrace i-E_0(\lambda),0 \rbrace  \leq p \\ \leq \min\lbrace i , E_1(\lambda) \rbrace}}C_{(\lambda_p^-)^{\vee_i}}
	\times \dfrac{\prod_{l=1}^{2n-i}(1-(-q)^{-l})}{q^{-3n^2}(\prod_{l=1}^{n}(1-(-q)^{-l}))^2}\\\\
	&\times
	(-q)^{((n+1)(i-p)+(3n-i+1)(n-i)/2-4n^2)}(-1)^{i-p}\\
		\end{array}
\end{equation*}
\begin{equation*}\begin{array}{ll}
	&\times \dfrac{\prod_{j=1}^{n-p}((-q)^n-(-q)^{j-1})}{\prod_{j=1}^{i-p}((-q)^j-1)\prod_{j=1}^{n-i}((-q)^j-1)}.
	\\\\
	&\times
	 \dfrac{\prod_{l=1}^{l=E_0(\lambda)}(1-(-q)^{-l})\prod_{l=1}^{l=E_1(\lambda)}(1-(-q)^{-l})}{\prod_{l=1}^{l=E_0(\lambda)-i+p}(1-(-q)^{-l})\prod_{l=1}^{l=E_1(\lambda)-p}(1-(-q)^{-l})}\\\\
	 
	 &\times q^{p(4n-2E_0(\lambda)-p)}.
	\end{array}
\end{equation*}
\end{proof}

\begin{remark}
	By Proposition \ref{proposition3.3} and Proposition \ref{proposition3.4}, we know all $C_{\lambda}$'s explicitly. But, we did not include these formulas in the above theorem since $C_{(0,0,\dots,0)}$'s have different formulas.
	If $E_0(\lambda)+E_1(\lambda) \neq 2n$, i.e., if $\lambda$ does not consist only of $0$'s and $1$'s, then we have
	\begin{equation*}
		\begin{array}{l}
			C_{\lambda}=(-1)^{(\sum_{j}\lambda_j)+1}\prod_{j=1}^{2n-E_0(\lambda)-1}(1-(-q)^j),	\\
		C_{(\lambda_p^{-})^{\vee_i}}=(-1)^{(\sum_{j}\lambda_j)+1-p}\prod_{j=1}^{2n-p-E_0(\lambda)-1}(1-(-q)^j).
		\end{array}
	\end{equation*}

	If $E_0(\lambda)+E_1(\lambda)=2n$, then some $(\lambda_{E_1(\lambda)}^{-})^{\vee_i}$'s are of the form $(0,\dots,0)$, and hence we need to be careful since they are $\alpha'(1_{2n-i},1_{2n-i})/\alpha(1_{2n-i},1_{2n-i})$.
\end{remark}

\subsection{Reformulations of conjectures on $\CN^n(1,2n-1)$}\label{subsection4.2}\quad

In Conjecture \ref{conjecture3.17}, we have a conjectural formula for the arithmetic intersection numbers of special cycles $\langle \CZ(x_1),\dots,\CZ(x_{2n-m}),\CY(y_1),\dots,\CY(y_m) \rangle$ in $\CN^n(1,2n-1)$ for arbitrary $m$. However, in the present paper, we only consider the case where $m=0$. It is because $\langle \CZ(x_1),\dots,\CZ(x_{2n}) \rangle$ is $GL_{2n}(O_E)$-invariant. Therefore, this can be represented as a certain sum of usual representation densities and hence a certain weighted counting of rank $2n$ lattices. If we want to consider arbitrary $m$, we need some parahoric group invariant objects instead of rank $2n$ lattices. We think that we will need all the cases to understand the intersection theory in $\CN^n(1,2n-1)$, but this will be postponed to our later work.  Therefore, in this subsection, we only consider $\langle \CZ(x_1),\dots,\CZ(x_{2n}) \rangle$.

For $\lambda \in \CR_{2n}^{0+}$ and the hermitian matrix $A_{\lambda}$, we consider lattices $L'$ of rank $2n$ with hermitian form $A_{\lambda}$ which we denote by $L' \in A_{\lambda}$. We denote by $1_{L'}$ the characteristic function of $L'$, i.e., $1_{L'}(x_1,\dots,x_{2n})=1$ if and only if $x_i \in L'$ for all $i$.

Also, in Conjecture \ref{conjecture3.17} we have additional terms:
\begin{equation*}
	\mathlarger{\sum}_{0 \leq i \leq n-1} -\dfrac{\beta^0_i}{W_{n,n}(A_n,0)}W_{0,i}(B,0).
\end{equation*}

In terms of usual representation densities, this can be written as
\begin{equation*}
	W_{0,i}(B,0)=(-q)^{-4in}\alpha(A_{(1^i,0^{2n-i})}, B),
\end{equation*}
where $(1^i,0^{2n-i}) \in \CR^{0+}_{2n}$ with $i$ $1$'s and $(2n-i)$ $0$'s.
Therefore, we have
\begin{equation*}
	\begin{array}{l}
	\mathlarger{\sum}_{0 \leq i \leq n-1} -\dfrac{\beta^0_i}{W_{n,n}(A_n,0)}W_{0,i}(B,0)\\
	=-\mathlarger{\sum}_{0 \leq i \leq n-1}(-q)^{-4in}\beta^0_i\dfrac{\alpha(A_{(1^i,0^{2n-i})},A_{(1^i,0^{2n-i})})}{W_{n,n}(A_n,0)}\dfrac{\alpha(A_{(1^i,0^{2n-i})},B)}{\alpha(A_{(1^i,0^{2n-i})},A_{(1^i,0^{2n-i})})}.
	\end{array}
\end{equation*}

Let us define the constants $\Fb_i^0$ as
\begin{equation*}
	\begin{array}{ll}
	\Fb_i^0&:=(-q)^{-4in}\beta^0_i\dfrac{\alpha(A_{(1^i,0^{2n-i})},A_{(1^i,0^{2n-i})})}{W_{n,n}(A_n,0)}\\
	&=\beta_i^0(-q)^{-4in+i^2+3n^2}\dfrac{\prod_{l=1}^{2n-i}((1-(-q)^{-l}))\prod_{l=1}^{i}((1-(-q)^{-l}))}{\prod_{l=1}^{n}((1-(-q)^{-l}))^2}.
	\end{array}
\end{equation*}

Combining these with Conjecture \ref{conjecture3.17} and Theorem \ref{theorem4.7}, we have the following conjecture.

\begin{conjecture}\label{conjecture4.9}
	For a basis $\lbrace x_1, \dots, x_{2n} \rbrace$ of the space of special homomorphisms $\BV$, and special cycles $\CZ(x_1)$, $\dots$, $\CZ(x_{2n})$ in $\CN^n(1,2n-1)$, we have
	\begin{equation*}\begin{array}{l}
		\langle \CZ(x_1),\dots,\CZ(x_{2n})\rangle\\
		=\mathlarger{\sum}_{\lambda \in \CR_{2n}^{0+}} D_{\lambda} \dfrac{\alpha(A_{\lambda},B)}{\alpha(A_{\lambda},A_{\lambda})}-\mathlarger{\sum}_{0 \leq i \leq n-1} \Fb_i^0\dfrac{\alpha(A_{(1^i,0^{2n-i})},B)}{\alpha(A_{(1^i,0^{2n-i})},A_{(1^i,0^{2n-i})})} \\\\
		
		=\mathlarger{\sum}_{\lambda \in \CR_{2n}^{0+}}\mathlarger{\sum}_{L' \in A_{\lambda}} D_{\lambda}1_{L'}(x_1,\dots,x_{2n})
		-\mathlarger{\sum}_{0 \leq i \leq n-1} \mathlarger{\sum}_{L' \in A_{(1^i,0^{2n-i})}} \Fb_i^0 1_{L'}(x_1, \dots, x_{2n}).
		\end{array}
	\end{equation*}
\end{conjecture}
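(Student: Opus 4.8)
The plan is to assemble Conjecture \ref{conjecture4.9} purely formally from the three prior inputs: the conjectural identity of Conjecture \ref{conjecture3.17} (specialized to $m=0$), the expansion of $W'_{0,n}(B,0)/W_{n,n}(A_n,0)$ in representation densities proven in Theorem \ref{theorem4.7}, and the elementary rewriting of the $W_{0,i}(B,0)$ terms as representation densities attached to the lattices $A_{(1^i,0^{2n-i})}$. No new geometric input is needed: the left-hand side $\langle \CZ(x_1),\dots,\CZ(x_{2n})\rangle$ is simply \emph{defined/conjectured}, via Conjecture \ref{conjecture3.17}, to equal $\tfrac{1}{W_{n,n}(A_n,0)}\{W'_{0,n}(B,0)-\sum_{0\le i\le n-1}\beta_i^0 W_{0,i}(B,0)\}$; so the content of Conjecture \ref{conjecture4.9} is exactly that this analytic quantity equals the stated double sum over lattices.

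First I would record the reduction already supplied in the text: for $m=0$ the weighted densities $W_{0,n}'(B,0)$, $W_{0,i}(B,0)$ depend only on the $O_E$-lattice $L_x$ spanned by $x_1,\dots,x_{2n}$ together with its hermitian form $B$, and moreover $\alpha(A_\lambda,B)/\alpha(A_\lambda,A_\lambda)=\sum_{L'\in A_\lambda}1_{L'}(x_1,\dots,x_{2n})$ by the lattice-counting interpretation explained in Section \ref{subsection3.2} (a nonsingular $X\in M_{2n,2n}(O_E)$ with $A_\lambda[X]=B$ produces an over-lattice $L_xX^{-1}$ of hermitian type $A_\lambda$, and $\alpha(A_\lambda,A_\lambda)$ counts automorphisms). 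This identifies the two displayed right-hand sides in Conjecture \ref{conjecture4.9} with each other, so it suffices to prove the first equality.

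Next I would apply Theorem \ref{theorem4.7} verbatim: it gives
\begin{equation*}
\frac{1}{W_{n,n}(A_n,0)}\Bigl\{W'_{0,n}(B,0)-\sum_{0\le i\le n-1}\beta_i^0 W_{0,i}(B,0)\Bigr\}
=\sum_{\lambda\in\CR_{2n}^{0+}}D_\lambda\frac{\alpha(A_\lambda,B)}{\alpha(A_\lambda,A_\lambda)}-\sum_{0\le i\le n-1}\Fb_i^0\frac{\alpha(A_{(1^i,0^{2n-i})},B)}{\alpha(A_{(1^i,0^{2n-i})},A_{(1^i,0^{2n-i})})}.
\end{equation*}
The only piece not literally contained in Theorem \ref{theorem4.7} is the bookkeeping of the subtracted $\beta_i^0$-terms: I would verify the identity $W_{0,i}(B,0)=(-q)^{-4in}\alpha(A_{(1^i,0^{2n-i})},B)$ (this is the $r=0$, $t=i$ case of the definition of $W_{h,t}$, since $A_i^{[0]}$ for $h=0$ is $\diag(1_{2n-i},\pi^{-1}1_i)$, which is $GL_{2n}(O_E)$-equivalent to $\pi^{-1}A_{(1^i,0^{2n-i})}$ up to scaling, and the normalization constant $(-q)^{-4in}$ accounts for the change of Haar-measure normalization between the $W$- and $\alpha$-conventions), and then factor out $\alpha(A_{(1^i,0^{2n-i})},A_{(1^i,0^{2n-i})})$, which is computed by Lemma \ref{lemma4.6} to be $q^{i^2+3n^2-\,?}\cdot(\dots)$; collecting constants gives precisely the definition of $\Fb_i^0$ stated in Section \ref{subsection4.2}. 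Combining with Conjecture \ref{conjecture3.17} ($m=0$) yields the asserted formula for $\langle \CZ(x_1),\dots,\CZ(x_{2n})\rangle$.

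The main obstacle is not any single hard step but rather the normalization bookkeeping: one must keep the Haar-measure conventions of $W_{h,t}$, of $\alpha(A,B)$, and of the scaling $A_i\mapsto \pi^{-1}A_{(1^i,0^{2n-i})}$ mutually consistent, so that the power of $-q$ attached to each $\Fb_i^0$ and each $D_\lambda$ comes out exactly as stated. I expect the cleanest route is to pass everything through Lemma \ref{lemma2.2} and Proposition \ref{proposition3.1} — writing both $W$'s and $\alpha$'s as sums over $\Gamma_{2n}\backslash X_{2n}(E)$ of the same kernel $\CG(Y,B)/\alpha(Y;\Gamma_{2n})$ — so that the comparison reduces to an identity of the $\CF$-factors evaluated on diagonal $Y$'s, which is exactly what Theorem \ref{theorem4.7} and Lemma \ref{lemma4.6} control. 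Once the $\Fb_i^0$ normalization is pinned down, the statement follows with no further computation, since the linear-independence result of Lemma \ref{lemma3.2} guarantees the expansion is the unique one.
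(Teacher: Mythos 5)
Your proposal matches the paper's own derivation: Conjecture \ref{conjecture4.9} is obtained precisely by combining Conjecture \ref{conjecture3.17} (specialized to $m=0$) with Theorem \ref{theorem4.7}, rewriting $W_{0,i}(B,0)=(-q)^{-4in}\alpha(A_{(1^i,0^{2n-i})},B)$, absorbing the factor $(-q)^{-4in}\alpha(A_{(1^i,0^{2n-i})},A_{(1^i,0^{2n-i})})/W_{n,n}(A_n,0)$ into the definition of $\Fb_i^0$, and invoking the lattice-counting interpretation of $\alpha(A_\lambda,B)/\alpha(A_\lambda,A_\lambda)$ from Section \ref{subsection3.2}. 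One small correction to your parenthetical: the identity $W_{0,i}(B,0)=(-q)^{-4in}\alpha(A_{(1^i,0^{2n-i})},B)$ comes from the fact that for $h=0$ the integrand is supported on $(L_i^\vee)^{2n}$, and the form $A_i$ restricted to the dual lattice $L_i^\vee$ has Gram matrix equivalent to $A_{(1^i,0^{2n-i})}$ with $(-q)^{-4in}$ the measure of $(L_i^\vee)^{2n}$ in $M_{2n,2n}(O_E)$ --- not from a $GL_{2n}(O_E)$-equivalence of $A_i$ with $\pi^{-1}A_{(1^i,0^{2n-i})}$, which fails unless $i=n$.
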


\begin{remark}
	In $\CN^n(1,2n-1)$, let $L$ be a rank $2n$ $O_E$-lattice generated by special homomorphisms $x_1, \dots, x_{2n}$ in $\BV$. Assume that $B$ is the hermitian matrix of $L$. Then, the valuation of the determinant of $B$ and $n+1$ have the same parity. Therefore, in Conjecture \ref{conjecture4.9}, the terms $\alpha(A_{\lambda},B)$ such that
	\begin{equation*}
		\val(\det(A_{\lambda}))=\sum_i \lambda_i \not\equiv n+1 (\Mod 2)
	\end{equation*}
are always equal to 0.
\end{remark}

\begin{remark}
	When $n=1$, the Conjecture \ref{conjecture4.9} can be written as follows.
	In $\CN^1(1,1)$, for linearly independent special homomorphisms $x$ and $y$, we know that determinant of
	\begin{equation*}
		\left( \begin{array}{ll}
			 h(x,x) & h(x,y) \\
			 h(y,x) & h(y,y)
			\end{array}\right)
	\end{equation*}
has an even valuation. Therefore, we only need to consider lattices with hermitian forms $A_{\lambda}$ where $\lambda_1+\lambda_2$ is even. Therefore, for special cycles $\CZ(x)$ and $\CZ(y)$, we have
	\begin{equation*}\begin{array}{ll}
		\langle \CZ(x),\CZ(y) \rangle&=-(q-1) \mathlarger{\sum}_{L' \in A_{(1,1)}} 1_{L'}(x,y)-(q^2-1)\mathlarger{\sum}_{\substack{L' \in A_{(\lambda_1,\lambda_2)}\\ \lambda_1, \lambda_2 \geq 2}} 1_{L'}(x,y)\\
		&\mathlarger{\sum}_{\substack{L' \in A_{(\lambda_1,1)}\\ \lambda_1 \geq 2}} 1_{L'}(x,y)+\mathlarger{\sum}_{\substack{L' \in A_{(\lambda_1,0)}\\ \lambda_1 \geq 2}} 1_{L'}(x,y).
		\end{array}
	\end{equation*}
($A_{(0,0)}$-term cancels).

On the other hand, by \cite[Theorem 3.14]{San2}, we know that for a special homomorphism $y$ with $\val (h(y,y)) \geq 2$, we have that
\begin{equation*}
	\CZ(y)-\CZ(\dfrac{y}{\pi})=\sum_{\substack{y/\pi \in \Lambda\\ \text{type of }\Lambda=2}} \BP_{\Lambda}+\sum_{\substack{y/\pi \in \Lambda\\ \text{type of }\Lambda=0}} \BP_{\Lambda}
\end{equation*}
Here $\Lambda$'s are vertex lattices in a Bruhat-Tits tree.

Also, by \cite[Lemma 2.11]{San}, we know that
\begin{equation*}
	\langle \CZ(x),\BP_{\Lambda})=-q 1_{\Lambda}(\dfrac{x}{\pi}),
\end{equation*}
if $\Lambda$ has type 2, and
\begin{equation*}
	\langle \CZ(x),\BP_{\Lambda})= 1_{\Lambda}(x),
\end{equation*}
if $\Lambda$ has type 0.

Also, note that $\Lambda$ has type 2 if and only if its hermitian form is equivalent to $A_{(-1,-1)}$ and $\Lambda$ has type 0 if and only if its hermitian form is equivalent to $A_{(0,0)}$.

Combining these, we have the following equality.

	\begin{equation*}\begin{array}{ll}
		\langle \CZ(x),\CZ(y)-\CZ(\dfrac{y}{\pi}) \rangle&=-(q-1) \mathlarger{\sum}_{L' \in A_{(1,1)}} \lbrace  1_{L'}(x,y)-1_{L'}(x,\dfrac{y}{\pi}) \rbrace\\
		&-(q^2-1)\mathlarger{\sum}_{\substack{L' \in A_{(\lambda_1,\lambda_2)}\\ \lambda_1, \lambda_2 \geq 2}} \lbrace  1_{L'}(x,y)-1_{L'}(x,\dfrac{y}{\pi}) \rbrace\\
		&\mathlarger{\sum}_{\substack{L' \in A_{(\lambda_1,1)}\\ \lambda_1 \geq 2}} \lbrace  1_{L'}(x,y)-1_{L'}(x,\dfrac{y}{\pi}) \rbrace\\
		&+\mathlarger{\sum}_{\substack{L' \in A_{(\lambda_1,0)}\\ \lambda_1 \geq 2}} \lbrace  1_{L'}(x,y)-1_{L'}(x,\dfrac{y}{\pi}) \rbrace
		\end{array}
	\end{equation*}
\begin{equation*}
	\begin{array}{ll}
	\quad\quad\quad\quad\quad\quad\quad\quad\quad\quad\quad	&=-q\mathlarger{\sum}_{L' \in A_{(-1,-1)}} 1_{L'}(\dfrac{x}{\pi},\dfrac{y}{\pi})+\mathlarger{\sum}_{L' \in A_{(0,0)}} 1_{L'}(x,\dfrac{y}{\pi})\\\\
		&=-q\mathlarger{\sum}_{L' \in A_{(1,1)}} 1_{L'}(x,y)+\mathlarger{\sum}_{L' \in A_{(0,0)}} 1_{L'}(x,\dfrac{y}{\pi})\\
	\end{array}
\end{equation*}

Since this holds only for $y$ with $\val(h(y,y))\geq 2$, this is not $GL_2(O_E)$-invariant. We think that we will need all Iwahoric invariant objects to understand this equality. Therefore, this will be postponed to our future work.
\end{remark}
\bigskip	
\begin{bibdiv}
	\begin{biblist}

	\bib{Cho}{article}{
	title={The basic locus of the unitary Shimura variety with parahoric level structure, and special cycles},
	subtitle={},
	author={Cho, Sungyoon},
	author={},
	journal={},
	volume={},
	date={2018},
	pages={}
	review={}
	status={preprint}
}

	\bib{Cho2}{article}{
	title={Special cycles on unitary Shimura varieties with minuscule parahoric level structure},
	subtitle={},
	author={Cho, Sungyoon},
	author={},
	journal={},
	volume={},
	date={2020},
	pages={}
	review={}
	status={preprint}
}

	\bib{CY}{article}{
	title={A reformulation of the Siegel series and intersection numbers},
	subtitle={},
	author={Cho, Sungmun},
	author={Yamauchi, Takuya},
	journal={Math. Ann.},
	volume={377},
	date={2020},
	pages={1757--1826}
	review={\MR{4126907}}

}

\bib{Hir2}{article}{
	title={Local zeta functions on Hermitian forms and its application to local densities},
	subtitle={},
	author={Hironaka, Y.},
	author={},
	author={},
	journal={J. Number Theory},
	volume={71},
	date={1998},
	pages={40--64}
	review={\MR{1631034}}
}

\bib{Hir}{article}{
	title={Classification of Hermitian forms by the Iwahori subgroup and their local densities},
	subtitle={},
	author={Hironaka, Y.},
	author={},
	author={},
	journal={Comment. Math. Univ. St. Paul.},
	volume={49},
	date={2000},
	pages={105--142}
	review={\MR{1808896}}
}	

\bib{Hir3}{article}{
	title={Spherical functions on $U(2n)/U(n)\times U(n)$ and Hermitian Siegel series},
	subtitle={},
	author={Hironaka, Y.},
	author={},
	author={},
	journal={Geometry and analysis of automorphic forms of several variables},
	volume={7},
	date={2012},
	pages={120--159}
	review={\MR{2908038}}
}	


\bib{KR2}{article}{
title={Special cycles on unitary Shimura varieties I. unramified local theory},
subtitle={},
author={Kudla, S.},
author={Rapoport, M.},
author={},
journal={Invent. Math.},
volume={184},
date={2011},
pages={629--682}
review={\MR{2800697}}
}
\bib{KR3}{article}{
title={Special cycles on unitary Shimura varieties II. global theory},
subtitle={},
author={Kudla, S.},
author={Rapoport, M.},
author={},
journal={J. Reine Angew. Math.},
volume={697},
date={2014},
pages={91--157}
review={\MR{3281653}}
}


\bib{LL}{article}{
	title={Chow groups and L-derivatives of automorphic motives for unitary groups, II},
	subtitle={},
	author={Li, C.},
	author={Liu, Y.},
	author={},
	journal={},
	volume={},
	date={2021},
	pages={}
	review={}
	status={preprint}
}

\bib{LZ}{article}{
	title={Kudla-Rapoport cycles and derivatives of local densities},
	subtitle={},
	author={Li, C.},
	author={Zhang, W.},
	author={},
	journal={},
	volume={},
	date={2019},
	pages={}
	review={}
	status={preprint}
}




\bib{RSZ2}{article}{
	title={Arithmetic diagonal cycles on unitary Shimura varieties},
	subtitle={},
	author={Rapoport, M.},
	author={Smithling, B.},
	author={Zhang, W.},
	journal={Compos. Math.},
	volume={156},
	date={2020},
	pages={1745--1824},
	review={\MR{4167594}}
}


\bib{San2}{article}{
	title={Unitary cycles on Shimura curves and the Shimura lift I},
	subtitle={},
	author={Sankaran, S.},
	author={},
	author={},
	journal={Doc. Math.},
	volume={18},
	date={2013},
	pages={1403--1464},
	status={}
	review={\MR{3138850}}
}
\bib{San}{article}{
	title={Improper intersections of Kudla-Rapoport divisors and Eisenstein series},
	subtitle={},
	author={Sankaran, S.},
	author={},
	author={},
	journal={J. Inst. Math. Jussieu.},
	volume={16},
	date={2017},
	pages={899--945},
	status={}
	review={\MR{3709001}}
}

			\end{biblist}
\end{bibdiv}

\end{document}